\numberwithin{equation}{section}
  \newtheorem{theorem}{Theorem}
  \newtheorem{lemma}[theorem]{Lemma}
  \newtheorem{corollary}[theorem]{Corollary}
  \newtheorem{proposition}[theorem]{Proposition}
  \newtheorem{definition}[theorem]{Definition}
  \newtheorem{remark}[theorem]{Remark}
\newcommand{\der}[2][]{\tfrac{\mathrm{d}#1}{\mathrm{d}#2}}
\numberwithin{theorem}{section}
\renewcommand{\vec}[1]{\boldsymbol{\mathrm{#1}}}              
\begin{document}

\title{\bf{\normalsize{WELL-POSEDNESS AND EQUILIBRIUM BEHAVIOUR OF OVERDAMPED DYNAMIC DENSITY FUNCTIONAL THEORY}}}

          \author{{\small{B. D. GODDARD}\thanks{School of Mathematics and Maxwell Institute for Mathematical Sciences, University of
Edinburgh, Edinburgh EH9 3FD, UK. Corresponding author: R. D. Mills-Williams (r.mills@ed.ac.uk).}},
          \and {\small{R. D. MILLS-WILLIAMS}\thanks{Institut f{\"u}r Theoretische Physik II: Weiche Materie, Heinrich--Heine--Universit{\"a}t D{\"u}sseldorf,  Universit{\"a}tsstra{\ss}e 1, D--40225
D{\"u}sseldorf, Germany.}}, \and {\small{M. OTTOBRE}\thanks{Heriot Watt University, Mathematics Department,
Edinburgh, EH14 4AS, UK.}}, \and {\small{G. A. PAVLIOTIS}}\thanks{Department of Mathematics, Imperial College London, London SW7 2AZ, UK. } 
}

\date{}

\maketitle

\begin{abstract}
We establish the global well-posedness of overdamped dynamic density functional theory (DDFT): a nonlinear, nonlocal integro-partial differential equation used in statistical mechanical models of colloidal fluids, and other applications including nonlinear reaction-diffusion systems and opinion dynamics. With nonlinear no-flux boundary conditions, we determine the existence and uniqueness of the weak density and flux, subject to two-body hydrodynamic interactions (HI). We also show that the density is Lyapunov stable with respect to the usual (Helmholtz) free energy functional. Principally, this is done by rewriting the dynamics for the density in an implicit gradient flow form, resembling the classical Smoluchowski equation but with spatially inhomogeneous diffusion and advection tensors. We also rigorously show that the stationary density is independent of the HI tensors, and prove exponentially fast convergence to equilibrium.
\end{abstract}

%\begin{graphicalabstract}
%\includegraphics{figs/grabs.pdf}
%\end{graphicalabstract}

%\begin{highlights}
%\item Research highlights item 1
%\item Research highlights item 2
%\item Research highlights item 3
%\end{highlights}

%\begin{keywords}
%Dynamic Density Functional Theory (DDFT) \sep  Colloids \sep  Overdamped limit \sep  Hydrodynamic interactions \sep  Nonlocal-differential PDEs \sep  Interacting particle systems 
%%\sep McKean-Vlasov equation \sep  Phase transitions \sep  Bifurcation theory
%\end{keywords}

\maketitle

\section{Introduction}\label{intro}

For equilibrium fluids, there is a rigorous mathematical framework proving the existence of nontrivial fluid densities, different from those found by classical fluid dynamical formalisms, by taking into account both many body effects and external force fields.  This is commonly known as (classical) density functional theory (DFT)~\cite{Mermin:1965lo}.  It is able to predict effects driven by the microscale, e.g., the non-smooth droplet profiles which are formed at the gas-liquid-solid trijunction in contact line problems \cite{berim2009simple} and the coexistence of multiple fluid films at critical values of the chemical potential energy in droplet spreading \cite{pereira2012equilibrium}.  It has been used to resolve the paradox of stress and pressure singularities normally found in classical moving contact line problems \cite{sibley2013contact}. What is more, DFT agrees well with molecular dynamics simulations; see, e.g.,~\cite{Lutsko10} and references therein. These advancements motivate more mathematical analysis, in particular, on the well-posedness of the underlying equations being used and on the number and structure of equilibrium states.

As a non-equilibrium extension to DFT for classical fluids, dynamic DFT (DDFT) has been applied to a wide range of problems:
polymeric solutions~\cite{PennaDzubiellaTarazona03},
spinodal decomposition~\cite{ArcherEvans04},
phase separation~\cite{Archer05},
granular dynamics~\cite{MarconiMelchionna07,MarconiTarazonaCecconi07},
nucleation~\cite{vanTeeffelenLikosLowen08},
liquid crystals~\cite{WittkowskiLowenBrand10},
and evaporating films~\cite{ArcherRobbinsThiele10}.
Recently, a stochastic
version of DDFT has been derived~\cite{Lutsko12}, which allows the study of energy barrier crossings, such
as in nucleation. A crucial point is that the computational complexity of DDFT is (essentially) constant in the number of particles, which allows the treatment of macroscopically large systems, whilst retaining microscopic information. Furthermore, due to the universality of the underlying nonlinear, nonlocal partial differential equations, DDFT may be considered as a generalisation of a wider class of such models used in the continuum modelling of many natural phenomena consisting of complex, many body, multi-agent interparticle effects including: pattern formation \cite{camazine2003self}, the flocking of birds, cell proliferation, the self organising of morphogenetic and bacterial species \cite{canizo2010collective, carrillo2009double}, nonlocal reaction-diffusion equations~\cite{al2018dynamical} and even consensus modelling in opinion dynamics\cite{chazelle2017well}. Many of these applications are often described as systems of interacting (Brownian) particles and, in the case of hard particle viscous suspensions, bath-mediated HI effects may be included. 

The HI are forces on the colloids mediated by the bath flow, generated by the motion of the colloidal particles. This in turn produces a nontrivial particle--fluid--particle hydrodynamic phenomenon, the inclusion of which has been shown to have 
substantial effects on the physics of many systems; for example,
they have been found to be the underlying mechanism for the increased viscosity of
suspensions compared to a pure bath~\cite{Einstein06}, the blurring of laning that arises in driven flow~\cite{WysockiLowen11},
the migration of molecules away from a wall~\cite{HodaKumar07}, 
and are particularly complex in confined systems~\cite{happel2012low,LaugaSquires05},
and for active particles and microswimmers, which result in additional HI~\cite{HuberKoehlerYang11}.

Mathematically, the inter-particle forces and HI can be described through the hydrodynamic fields $\varrho$ and $\vec{v}$, the one-body density and one-body velocity fields, respectively. These fields, inherent to a continuum description of a collection of particles, are derived by considering successive moments (density, velocity, heat flux, \dots) of the underlying kinetic system \cite{gorban2014hilbert}. For example, for systems of interacting Newtonian particles, when the momenta are non-negligible, the evolution of the phase space density $f(\vec{r}^N,\vec{p}^N, t)$ for a system of $N$ colloids determining the probability of finding the system in the state $(\vec{r}^N,\vec{p}^N)$ at time $t$ is described by the $N$-body Fokker-Planck equation and the dynamics of the hydrodynamic fields are defined by obtaining closed equations for  $\{\varrho, \varrho\times \vec{v}\} := \int \mathrm{d}\vec{r}^{N-1}\,\mathrm{d}\vec{p}^N\, \{1, \vec{p}/m\}f(\vec{r}^N,\vec{p}^N, t)$, where $m$ is the particle mass.  Here, $\vec{r}^N$ and $\vec{p}^N$ denote the $N$ $d-$dimensional position and momentum vectors of all $N$ particles.

The inclusion of HI leads to a much richer hierachy of fluid equations compared to systems without HI; compare e.g.\ \cite{goddard2012unification} and \cite{archer2009dynamical}. In particular, see e.g. \cite{goddard2012unification}, by integration over all but one particle position, the one-body Fokker-Planck equation may be obtained. If, in addition, two-body HI and interparticle interactions are assumed and the inertia of the colloids is considered small, a high friction limit $\gamma\to \infty$ may be taken~\cite{goddard2012overdamped}.  The result is that the velocity distribution converges to a Maxwellian, and one can eliminate the momentum variable through an adiabatic elimination process that is based on multiscale analysis \cite{pavliotis2008multiscale}. The final one-body Smoluchowski equation for $\varrho$ is a novel, nonlinear, nonlocal PDE shown to be independent of the unknown kinetic pressure term $\int \mathrm{d}\vec{r}\,\mathrm{d}\vec{p}\, m^{-2} \vec{p}\otimes\vec{p} f(\vec{r},\vec{p}, t)$, which normally persists at $\gamma = O(1)$ (see \cite{goddard2012overdamped}, Theorem 4.1). 

Existence, uniqueness and global asymptotic stability of the novel Smoluchowski equation in this overdamped limit has, until this work, remained unproven. The inclusion of HI is interesting from both physical and mathematical standpoints. Physically, as above, the HI give rise to a much more complex evolution in the density.  Mathematically, the convergence to equilibrium will depend inherently on the spectral properties of the effective diffusion tensor and effective drift vector arising from the HI. What is more, since the full $N$-body Fokker-Planck equation is a PDE in a very high dimensional phase space, well-posed nonlinear, nonlocal PDEs governing the evolution of the one-particle distribution function, valid in the mean field limit, describing the flow of nonhomogeneous fluids are desirable for computational reasons.

The equations studied in this paper are related to the McKean-Vlasov equation \cite{chayes2010mckean}, a nonlinear nonlocal PDE of Fokker-Planck type that arises in the meanfield limit of weakly interacting diffusions. The novelty of the present problem lies in the spatially dependent nonhomogeneous diffusion and advection tensors as well as the nonlinear, nonlocal boundary condition. We are interested in global existence, uniqueness, positivity and regularity of the density, including the nonlocal effects due to HIs, which represents a non-trivial deviation from classical McKean-Vlasov equation. Primarily used in modelling the evolution of the density of plasma, the McKean-Vlasov equation has received much attention in recent years within a rigorous mathematical framework, particularly with the use of calculus of variations techniques by writing the evolution equation for the density in a gradient flow form with a suitably defined free energy functional. Previous well-posedness studies of similar nonlinear, nonlocal PDEs focused on periodic boundary conditions and without nonlocal effects; see, e.g.,~\cite{chazelle2017well, greg_mckean_vlasov_torus}. In contrast, we are interested in the well-posedness subject to no-flux boundary conditions. This choice admits the nontrivial effect of the two body forces generated by the potential $V_2$ interacting with density on the boundary of the physical domain. The motivation for this choice of boundary condition is physical; it corresponds to a closed system of particles in which the particle number is conserved over time. It is clear that most applications of such equations will be in confined systems, rather than  a periodic domain and, as such, no-flux boundary conditions are natural. 

\subsection{Main Results.}
The main results of this work are threefold. 
\begin{enumerate}
\item We establish the existence and uniqueness of the transient density and flux, $\varrho(\vec{r},t)$, $\vec{a}(\vec{r},t)$ respectively, in a weak sense, for two-body HI (see Theorem \ref{thm:exis_uniq_rho_and_a}).
\item We establish a gradient flow structure and Lyapunov stability for classical solutions $\varrho(\vec{r},t)$ subject to two-body HI (see Theorem \ref{thm:eig_expan_gradient_flow_lyapunov}).
\item We derive an \emph{a priori} convergence estimate of the density classical density $\varrho(\vec{r},t)$ to equilibrium in $L^2$, also subject to two-body HI (see Theorem \ref{thm:apriori_conv_est}).
\end{enumerate}

The novelty of this work is shaped by the non-trivial technical difficulties arising from the inclusion of the HI tensors, which manifests as additional nonlocality in the evolution equation for the density and flux, as well as imposing a nonlinear, nonlocal no-flux boundary condition in order to satisfy conservation of mass. These results are of particular interest for the rigorous mathematical foundations of computational models of colloidal fluids. 

\subsection{Organisation of the Paper.}
The paper is organised as follows: in Section \ref{sec:preliminaries} we state the main dynamic and equilibrium equations, present a free energy framework, introduce the HI operators, and present a recasting of the main equations as a gradient flow structure. In Section \ref{sec:assumptions_definitions} set forth the assumptions and regularity of the potentials and HI tensors for the dynamic and equilibrium equations, and define a weak formulation for the density and flux. In Section \ref{sec:statement_of_main_results} we state the main results of the present work in a precise manner. In Section \ref{sec:existence_uniqueness_denstiy_flux_pair} we prove an existence and uniqueness theorem the weak density $\varrho(\vec{r},t)$ and flux $\vec{a}(\vec{r},  t)$. In Section \ref{sec:behaviour_classical_solutions} we rigorously connect classical densities to a variational principle involving a unique free energy functional $\mathcal{F}[\varrho]$, prove an H-Theorem for the classical density, and characterise equilibrium densities and derive a convergence of $\varrho(\vec{r},t)$ to equilibrium in $L^2$ as $t \to \infty$. Finally, in Section \ref{sec:discussion}, we present our concluding remarks and state some open problems. 

\section{Preliminaries}\label{sec:preliminaries}
Let $\Omega$ be a bounded subset of $\mathbb{R}^d$ with a $C^1$ boundary $\partial\Omega\in\mathbb{R}^{d-1}$ and consider the following coupled system of non-linear and non-local PDEs, for the unknown density $\varrho : \Omega \times [0,\infty) \to \mathbb{R}$ and flux  $\vec{a}:\Omega \times [0,\infty) \to \mathbb{R}^d$, evolving according to 
\begin{subequations}
\begin{align}
&\partial_{t}\varrho(\vec{r},t) +\nabla \cdot\vec{a}(\vec{r}, t)=0,\label{eq:ddft_eq_dyn_rho}  \quad \quad \quad\quad\quad\quad \qquad 
\qquad \qquad \qquad \qquad  \vec{r} \in \Omega \subset \mathbb{R}^d, \, t \in[0,T]\\ 
&\left(\bm{1}+\left(\bm{Z}_1\star \varrho\right) (\vec{r},t)\right) \,\vec{a}(\vec{r},  t)+\varrho(\vec{r},t)\left(\bm{Z}_2\star \vec{a}\right) (\vec{r},t)=\vec{f}(\vec{r},\varrho, t) ,\label{eq:ddft_eq_dyn_a}  \,\,\, \,\vec{r} \in \Omega \subset \mathbb{R}^d, \, t \in[0,T]
\end{align}
\end{subequations}
subject to the following boundary and initial conditions
\begin{subequations}
\begin{align}
&\vec{a}(\vec{r}, t)\Big|_{\partial \Omega}\cdot\vec{n} = 0,\label{bc:DDFT} \qquad \,\,\,\, t \in [0,T]\\
& \varrho(\vec{r}, 0) = \varrho_0(\vec{r}), \,\,\,\qquad\quad \vec{r}\in \Omega \subset \mathbb{R}^d \label{eq:init_rho0}\\
& \vec{a}(\vec{r}, 0) = \vec{a}_0(\vec{r}), \,\,\,\quad \quad \quad \vec{r}\in \Omega \subset \mathbb{R}^d\label{eq:init_a0}
\end{align}
\end{subequations}
where $T>0$ is arbitrary, $\star$ denotes convolution in space and $\vec{n}$ is a unit normal vector pointing out of the domain $\Omega$. The boundary condition \eqref{bc:DDFT} is a nonlinear Robin condition imposing that the flux through the boundary $\partial \Omega$ is zero. Note that by the divergence theorem, the boundary condition \eqref{bc:DDFT} combined with equation \eqref{eq:ddft_eq_dyn_rho} implies that $\int \mathrm{d}\vec{r}\, \varrho(\vec{r},t) = M>0$, for all time, where $M$ is a constant independent of time. Hence \eqref{bc:DDFT} as a boundary condition for $\vec{a}(\vec{r},t)$ also provides an integral condition for $\varrho(\vec{r},t)$ (i.e., conserved mass), meaning, \eqref{eq:ddft_eq_dyn_rho}--\eqref{eq:ddft_eq_dyn_a} has two auxiliary conditions for two unknowns. Without loss of generality we let $M = 1$. Moreover, $\bm{Z}_1: \Omega^2 \to \mathbb{R}^{d\times d}$ and  $\bm{Z}_2: \Omega^2 \to \mathbb{R}^{d\times d}$ are the so-called Hydrodynamic Interaction (HI) tensors, and $\vec{f}: \Omega\times \mathbb{R} \times [0,\infty)\to \mathbb{R}^d$ is a body force given by
\begin{align}
\vec{f}(\vec{r}, \varrho, t):=\left(\nabla  + \nabla V_1(\vec{r},t)+ \left( \nabla V_{2}\star \varrho\right) (\vec{r},t)\right)\varrho(\vec{r},t),\label{eq:body_force_f}
\end{align}
where $V_1 : \Omega \times [0,\infty) \to \mathbb{R}, \quad V_2 : \Omega^2 \to \mathbb{R}$ are, one and two-body functions respectively. 
  
We will also consider the stationary problem associated with  \eqref{eq:ddft_eq_dyn_rho} - \eqref{bc:DDFT}, for the equilibrium density $\varrho_0:\Omega \to \mathbb{R}$ and equilibrium flux $\vec{a}_0 :\Omega\times\mathbb{R} \to \mathbb{R}^d$ governed by    
\begin{subequations}
\begin{align}
&\nabla \cdot\vec{a}_0(\vec{r})=0, \qquad \qquad \qquad \qquad\qquad 
\qquad \qquad \qquad \qquad \qquad  \qquad \,\,\vec{r} \in \Omega \subset \mathbb{R}^d,\label{eq:ddft_rho_stationary}\\ 
&\left(\bm{1}+\left(\bm{Z}_1\star \varrho_0\right) (\vec{r})\right) \,\vec{a}_0(\vec{r})+\varrho_0(\vec{r})\left(\bm{Z}_2\star \vec{a}_0\right) (\vec{r})=\vec{f}^{eq}(\vec{r},\varrho_0), \quad\qquad   \vec{r} \in \Omega \subset \mathbb{R}^d,\label{eq:ddft_a_stationary}
\end{align}
\end{subequations}
subject to the boundary condition
\begin{align}
\vec{a}_0(\vec{r})\Big|_{\partial \Omega}\cdot\vec{n} &= 0,
\end{align}
and where $\vec{f}^{eq}:\Omega\times \mathbb{R}\to\mathbb{R}^d$ is defined as
\begin{align}
\vec{f}^{eq}(\vec{r},\varrho_0):=\left(\nabla  + \nabla V_1^{eq}(\vec{r})+ \left( \nabla V_{2}^{eq}\star \varrho_0\right) (\vec{r},t)\right)\varrho_0(\vec{r}),
\end{align}
for $V_1^{eq}:\Omega\to\mathbb{R}$ and $V_2^{eq}:\Omega^2\to\mathbb{R}$. Additionally, for each $\phi:\Omega \times [0,\infty)\to \mathbb{R}^{+}$ we define the nonhomogeneous diffusion tensor $\bm{D}_\phi:\Omega\times[0,\infty)\to \mathbb{R}^{d\times d}$ as
\begin{align}
\bm{D}_{\phi}(\vec{r},t):=\Big(\bm{1}+\int\mathrm{d}\vec{r}'
\,\phi(\vec{r}',t)\bm{Z}_1(\vec{r}-\vec{r}')\Big)^{-1}.\label{eq:def_diffusion_tensor}
\end{align}
Note that, for every fixed $\vec{r}, \phi$ and $t$, $\bm{D}(\vec{r},t)$ is real and symmetric and was previously proven to be strictly positive definite (and therefore invertible) (see \cite[Corollary 4.2]{goddard2012overdamped} for details). Therefore, $\bm{D}_{\phi}^{-1}(\vec{r},[\varrho],t)$ exists and is uniformly bounded in space and time for each $\phi$, and, as such we carry this over to Assumption \eqref{ass:D_pos_def_weak_diffable}. Throughout this work we denote the eigenvalues of $\bm{D}_\phi$ as $\mu_i^\phi$ for $i=1,\cdots, d$ and the smallest and largest eigenvalues are denoted $\mu_{\min}^{\phi}$ and $\mu_{\max}^\phi$, respectively. Note that for each $i=1,\cdots,d$, $\mu_{i}^\phi$ are parametrised by time $t\in [0,\infty)$. The nonhomogeneous advection tensor $\bm{A}:\Omega\times \mathbb{R}^d\times [0,\infty)\to \mathbb{R}^{d\times d}$ is an effective background flow induced by the hydrodynamic interactions defined by the action
\begin{align}\label{eq:def_of_V1_eff}
\bm{A}(\vec{r}, [\vec{a}], t):=\int \mathrm{d}\vec{r}'\,\bm{Z}_2(\vec{r}-\vec{r}')\vec{a}(\vec{r'},t)
\end{align}
where $[\cdot]$ denotes functional dependence (i.e. for any function $f$, we write $f([\vec{a}])$ to express dependence of $f$ on the whole function $\vec{a}$). For later calculations it is convenient to define the generalised Hydrodynamic Interaction operator $\mathcal{H}_{\phi]}:L^{2}(\Omega)\times L^{2}(\Omega) \to L^{2}(\Omega)$ which for each $\phi:\Omega\times [0,\infty)\to \mathbb{R}^+$ acts on a vector field $\vec{v}:\Omega\times[0,t)\to \mathbb{R}^d$ as
\begin{align}
\mathcal{H}_{\phi}\vec{v}(\vec{r},t) &:= \bm{D}_\phi^{-1}\vec{v}(\vec{r},t) +\phi \left( \bm{Z}_2 \star \vec{v}\right)(\vec{r},t). \label{def:H_phi_op}
\end{align}
The operator $\mathcal{H}_{\phi}$ will be useful for writing the dynamical equations \eqref{eq:ddft_eq_dyn_rho}--\eqref{eq:ddft_eq_dyn_a} in weak form for the results of Section \ref{sec:existence_uniqueness_denstiy_flux_pair} as well as establishing results on classical solutions in gradient flow form for Section \ref{sec:behaviour_classical_solutions}. 

Related to the system \eqref{eq:ddft_eq_dyn_rho}-\eqref{eq:ddft_eq_dyn_a}, we present the free energy functional $\mathcal{F}:P^+_{\text{ac}}(\Omega)\to \mathbb{R}$ where $P^+_{\text{ac}}$ is the set of strictly positive definite absolutely continuous probability measures on $\Omega\subset\mathbb{R}^d$. We define
\begin{align}
\mathcal{F}[\varrho](t)&:=\int \mathrm{d}\vec{r}\,\varrho(\vec{r},t)\left( \log \varrho(\vec{r},t) -1 \right)+\int \mathrm{d}\vec{r}\,\varrho(\vec{r},t)\,V_1(\vec{r},t)+\frac{1}{2} \int \mathrm{d}\vec{r}\,\varrho(\vec{r},t)\,\left( V_2\star\varrho\right) (\vec{r},t).\label{eq:def_of_F}
\end{align}
Finally, we observe that the right hand side of \eqref{eq:ddft_eq_dyn_a} resembles $\varrho\nabla \delta \mathcal{F}/(\delta \varrho)$ where $\delta/(\delta \varrho)$ denotes the functional derivative with respect to $\varrho$ given by
\begin{align}
\frac{\delta \mathcal{F}}{\delta \varrho}[\varrho]:= \lim_{\epsilon\to 0} \frac{\mathcal{F}[\varrho+\epsilon\omega]-\mathcal{F}[\varrho]}{\epsilon}
 \end{align} 
where $\omega\in P^+_{\text{ac}}(\Omega)$ is an arbitrary function and $\epsilon\omega$ is the variation of $\varrho$. For the equilibrium problem \eqref{eq:ddft_rho_stationary}-\eqref{eq:ddft_a_stationary}, note that for a given a finite flux vector $\vec{a}_0$ with $\nabla\cdot \vec{a}_0 = 0$, it is not obvious that $\varrho_0$ is necessarily a minimiser of $\mathcal{F}$. However indeed, for the particular choice $\vec{a}_0\equiv \vec{0}$ (which is a natural and physically realistic solution), $\varrho_0$ is necessarily a extremum of $\mathcal{F}$, by direct verification. We will in fact show under reasonable assumptions, in Section \ref{sec:behaviour_classical_solutions}, that the classical solutions $\varrho_0$ and $\vec{a}_0 \equiv \vec{0}$ to \eqref{eq:ddft_rho_stationary}-\eqref{eq:ddft_a_stationary} with $\varrho_0$ being an extremum of $\mathcal{F}$ are the unique equilibrium solutions. Additionally, in Proposition \ref{prop:association _of_free_energy} the functional $\mathcal{F}$ will be shown to uniquely define the density field minimising the free energy associated to the system \eqref{eq:ddft_eq_dyn_rho}--\eqref{eq:ddft_eq_dyn_a}  as $t\to \infty$. 
 
\subsection{Gradient and Non-Gradient Forms.}

In this section we present two alternate forms of the DDFT \eqref{eq:ddft_eq_dyn_rho}--\eqref{eq:ddft_eq_dyn_a}. The first recasting is useful for analysing the behaviour of classical solutions $\varrho(\vec{r},t)$ and $\vec{a}(\vec{r},t)$, in particular to establish the Lyapunov stability of $\varrho(\vec{r},t)$ with respect to $\mathcal{F}$. The second recasting is useful for establishing existence and uniqueness of the weak density $\varrho(\vec{r},t)$ and weak flux $\vec{a}(\vec{r},  t)$ solving a weak formulation of \eqref{eq:ddft_eq_dyn_rho}--\eqref{eq:ddft_eq_dyn_a}. First we present the gradient flow structure. We will see that, under suitable conditions, (as shown in Corollary \ref{cor:gradient_flow_structure}) and with the \eqref{def:H_phi_op}, we may rewrite \eqref{eq:ddft_eq_dyn_rho}, \eqref{eq:ddft_eq_dyn_a} as
\begin{align}\label{eq:ddft-eq-gradient_flow_form}
\partial_{t}\varrho = \nabla \cdot \left(\mathcal{H}^{-1}_{\varrho}\varrho \nabla \tfrac{\delta \mathcal{F}}{\delta \varrho}[\varrho]\right),
\end{align}
From \eqref{eq:ddft-eq-gradient_flow_form} we see that full the dynamics has an implicit gradient flow structure. The invertibility of $\mathcal{H}_{\varrho}$ is established by Proposition \ref{thm:cond_converg_fred_det}. 

We now present the non-gradient flow structure for the well-posedness results. Due to the non-explicit form of $\mathcal{H}^{-1}_{\varrho}$, \eqref{eq:ddft-eq-gradient_flow_form} is not a useful form in establishing the well-posendess. For these results we use the following explicit (but non-gradient) form of the evolution equations \eqref{eq:ddft_eq_dyn_rho}, \eqref{eq:ddft_eq_dyn_a}
\begin{align}
&\partial_t\varrho=\,\nabla \cdot\left(\bm{D}_{\varrho}\nabla \varrho+\varrho\,\bm{D}_{\varrho}\left(\nabla  ( V_1+ V_2\star\varrho) + \bm{A}\right)\right),\label{eq:ddft_dyn_non_gradient_flow}
\\
&\bm{D}_{\varrho}\,\left(\nabla \varrho + 
\varrho(  \nabla V_1(\vec{r},t)+  \nabla V_2\star \varrho+\bm{A})\right)\cdot\vec{n}\big|_{\partial \Omega} = 0. \label{eq:ddft_dyn_non_gradient_flow_bc}
\end{align}
We continue to the next section by providing the assumptions and regularity of the potentials, diffusion and advection tensors and initial data. 
%\subsection{Boundary Conditions}\label{subsec:boundary_conditions}
%
%For compact spatial domains, conservation of mass leads to a nonlinear boundary condition. For $\Omega\subset \mathbb{R}^d$ an open and bounded we impose that the total mass of the system remains constant, in particular we have 
%\begin{align}\label{bc:mass_preserving}
%0 = \der[]{t}\int\mathrm{d}\vec{r}\,\varrho(\vec{r},t) = -\int\mathrm{d}\vec{r}\,\nabla\cdot \vec{a}(\vec{r},[\varrho],t) = -\int_{\partial \Omega}\mathrm{d}S\vec{n}\cdot \vec{a}(\vec{r}, [\varrho], t) \quad \Rightarrow \quad \vec{a}(\vec{r}, [\varrho], t)\cdot\vec{n}\bigg|_{\partial \Omega\times [0,T]} = 0.
%\end{align}
%where we have used the divergence theorem and $\partial \Omega$ is a $C^1$ boundary with unit normal $\vec{n}$. The boundary condition \eqref{bc:mass_preserving} is a nonlinear Robin condition imposing the flux through the boundary $\partial \Omega$ is zero for all time $t\in [0,T]$. At equilibrium we assume that the initial data has finite free energy and is consistent with the imposed boundary conditions. 

\section{Main Assumptions, Notation \& Definitions.}\label{sec:assumptions_definitions}
%Typically for long range HI the $\bm{Z}_1$, $\bm{Z}_2$ exhibit singularities at the origin (particle centres) so the correlation function $g$ is a necessary inclusion and provides a way of smoothing $\bm{D}$ and we assume $g \in L^\infty(\Omega)$. 
%For the existence and uniqueness theory in Section \ref{sec:existence_uniqueness_with_full_HI} we require that first derivatives of $\bm{D}_{ij}$ to be bounded in $L^\infty(\Omega)$. Out of equilibirum, we will suppress the time dependence on $\bm{D}$, $V_1$ to ease notation. However at 
We have the following assumptions for the evolution problem \eqref{eq:ddft_dyn_non_gradient_flow}.

%\subsubsection{Notation}
%Throughout we ease notation on the two-body interaction potential. 
%\begin{itemize}
%\item The two-body interaction potential is redefined to absorb the correlation function $g$
%\begin{align}\label{ass:V2_redef}
%V_2(\vec{r},\vec{r}')\stackrel{\text{redef}}{:=}   V_2(\vec{r},\vec{r}'). \tag{N1}
%\end{align}
%We do this to ease notation. In practice, there are many choices for $g$, for example the hard sphere approximation takes $g(|\vec{r}- \vec{r}'|) = 0$ for $|\vec{r}-\vec{r}'| < 1$ and unity otherwise. Alternatively $g$ may be obtained numerically from microscopic dynamics. 
%\end{itemize}

\subsection{Assumptions D}

For the dynamics we assume:
\begin{itemize}\label{assumptions:D_V1_V2_rho0}
\item For $\phi:\Omega\times[0,\infty)\to\mathbb{R}^{+}$, the diffusion tensor $\bm{D}_{\phi}$ is symmetric, positive definite. Addiitionally, the first derivatives of $\left(\bm{D}_{\phi}\right)_{ij}$ and $\bm{A}_{ij}$ are bounded in $L^\infty(\Omega)$ \begin{align}\label{ass:D_pos_def_weak_diffable}
\left(\bm{D}_{\phi}\right)_{ij}, \, \bm{A}_{ij}\in W^{1,\infty}(\Omega). \tag{D1}
\end{align}
Additionally, by the positive definite property in \eqref{ass:D_pos_def_weak_diffable} we may uniquely define the square root of $\bm{D}_{\phi}$ denoted $\bm{D}_{\phi}^{1/2}$ such that $\bm{D}_{\phi}^{1/2}\bm{D}_{\phi}^{1/2}=\bm{D}_{\phi}$ for every $\vec{r}\in \Omega$, $t\in [0,T]$ and each $\phi$. 

%We also define the eigenvalues $\mu_i\in \mathbb{R}^+$ of $\bm{D}$ and eigenvectors $\vec{e}_i(\vec{r},[\varrho],t)\in L^2(\Omega)$ such that
%\begin{align}\label{eq:mu_i_eigenvalue_eqn_D}
%\bm{D}\vec{e}_i = \mu_i\vec{e}_i.
%\end{align}
%Note that $\{\vec{e}_i\}_{i=1}^d$ forms an orthonormal basis of $\mathbb{R}^d$ (since $\bm{D}$ is a bounded, symmetric operator) for $i = 1,\cdots, d$ such that $\langle\vec{e}_i(\vec{r},[\varrho],t),\vec{e}_j(\vec{r},[\varrho],t)\rangle = \vec{e}_i(\vec{r},[\varrho],t)\cdot \vec{e}_j(\vec{r},[\varrho],t) = \delta_{ij}$.
\item The diagonal and off-diagonal blocks of the HI tensors are real, symmetric and uniformly bounded in the sense
\begin{align}\label{ass:Z2_uniformly_bd}
\|\bm{Z}_2\|_{L^\infty(\Omega)}<\infty, \quad  \|\bm{Z}_1\|_{L^\infty(\Omega)}<\infty.\tag{D2}
\end{align}

%In particular, for the latter, it is well known that the entries of $\bm{Z}_2$ decay as $r^{-1}$. The correlation function $g$ provides a non-smooth cut off at a particle diameter hence avoiding divergence at the particle centre.

%In principle, $\mu$ may be given as a constant intensive variable of the grand canonical ensemble. Note that $\mu$ is a potential, so by raising it one may force more particles into the system. We will assume that $\mu$ is constant to fix the number of particles. Though in classical fluids $\varrho$ is a number density such that $\int \mathrm{d}\vec{r}\, \varrho(\vec{r},t) = N$, since we will frequently use the integral of the density, we will assume $\varrho$ and $\varrho_0$ are probability densities to ease notation for the analysis in Section \ref{sec:ex_uni_full_HI} and onwards. We now discuss regularity on the potentials and diffusion tensor.

\item We assume $\varrho_0(\vec{r}), \,\vec{a}_0(\vec{r}) \in C^\infty(\Omega)$. Additionally, the initial data $\varrho_0$ is a non-negative,  absolutely continuous probability density
\begin{align}\label{ass:rho_in_L2_P_ac}
\varrho_0\in P_{ac}(\Omega)\cap C^\infty(\Omega). \tag{D3}
\end{align}
The smoothness restrictions may be weakened to square-integrable, admitting a wider class of initial data once the first well-posedeness result is obtained.
%The assumption \eqref{ass:rho_in_L2_P_ac} that $\varrho_0\in P_{\text{ac}}(\Omega)$ is included in order to cover a wider set of physically relevant scenarios. In particular we permit initial data such that $\varrho_0|_{U}=0$ for some $U\subset \Omega$ where $A$ is non-empty. Physically speaking this system could correspond to, at time $t=0$, a box partitioned into closed regions with at least one region containing no particles. Then, instantaneously as soon as $t>0$, the partition is removed allowing the particles to move freely. 
At the end of Section \ref{sec:behaviour_classical_solutions} we will show by simple application of Harnack's inequality, that we obtain strictly positive densities $\varrho(\vec{r},t_1)>0$ after an arbitrarily small time $t_1>0$. Principally this is provided by the property \eqref{ass:D_pos_def_weak_diffable}, since $\bm{D}_{\phi}$ is positive definite, the diffusion of density in the system \eqref{eq:ddft_dyn_non_gradient_flow} is everywhere propagating in $\Omega$. 

\item The potentials each have two bounded derivatives, 
\begin{align}\label{ass:V1_V2_in_W_1_inf}
 V_1(\cdot, t),V_2\in W^{2,\infty}(\Omega), \tag{D4}
\end{align}
for each $t\in [0,T]$. The functions $V_1$ and $V_2$ are the confining and two-body interaction potentials respectively.
\end{itemize}

\subsection{Assumptions E}
For the equilibrium problem \eqref{eq:ddft_rho_stationary}, \eqref{eq:ddft_a_stationary} we will assume:
\begin{itemize}
\item The potentials have first order weak derivatives in $L^2(\Omega)$ 
\begin{align}\label{ass:V1_V2_in_H_1}
 V_1^{eq},V_2^{eq}\in H^{1}(\Omega).\tag{E1}
\end{align}
\end{itemize}
In particular, Assumption \eqref{ass:V1_V2_in_H_1} will permit us to establish smooth stationary densities. 
%Note that typical inter-particle potentials, such as Morse or Coulomb, are unbounded as the particle separation goes to zero.  This is once again mitigated by the choice of $g$, which we recall has been absorbed into $V_2$ by assumption \eqref{ass:V2_redef}. 
In general we admit non-convex $V_1$ and $V_2$, for example multi-well potentials. 

%Finally, to make the connection with DFT formalisms for equilibrium fluids one could prescribe initial data $(\varrho_0,\vec{a}_0)^\top$ such that
%\begin{align}\label{eq:initial_data_for_a_and_rho}
%\frac{\delta\mathcal{F}}{\delta\varrho}[\varrho_0](\vec{r}) = \mu,\qquad
%\vec{a}_0 = \vec{0}.
%\end{align}
%where $\mu$ is the chemical potential, constant at equilibrium. It is straightforward to check that $\{\varrho_0,\vec{a}_0\}$ is an steady state of the system \eqref{eq:ddft_eq_dyn}, \eqref{eq:flux_integral_eqn}. However, \emph{a priori}, one cannot say that the double $\{\varrho_0,\vec{a}_0\}$ are unique by this argument alone. We will in fact show that $\vec{a}_0 = \vec{0}$ is the unique stationary flux and the existence of multiple equilibrium density fields can be found in {\color{red} CITE OTHER PAPER}. Since we will frequently use the integral of the density, we will assume $\int\mathrm{d}\vec{r}\,\varrho(\vec{r},t) = \int\mathrm{d}\vec{r}\,\varrho_0 = 1$.

\subsection{Weak Formulation.}
 We now define a weak formulation of the dynamics \eqref{eq:ddft_eq_dyn_rho}--\eqref{eq:ddft_eq_dyn_a}.  
 
\begin{definition}[Weak Solution For Density-Flux Pair.]
We now consider the weak formulation of \eqref{eq:rho_it_eqn}--\eqref{eq:a_it_eqn}. Let $\vec{f}_{\varrho}: = \vec{f}(\vec{r}, \varrho, t)$. We say that $\varrho\in L^2([0,T]; H^1(\Omega))\cap L^\infty([0,T]; L^2(\Omega))$, $\partial_t\varrho\in L^2([0,T]; H^{-1}(\Omega))$ and $\vec{a}\in L^2([0,T];L^2(\Omega,\varrho^{-1}))$ is a weak solution to \eqref{eq:ddft_eq_dyn_rho}--\eqref{eq:ddft_eq_dyn_a} if for every $\eta\in L^2([0,T]; H^1(\Omega))$ and $\vec{v}\in L^2([0,T]; L^2(\Omega))$
\begin{subequations}{}
\begin{align}
&\int_0^T\mathrm{d}t\, \langle \partial_t \varrho(t), \, \eta(t)  \rangle +\int_0^T\mathrm{d}t\, \int  \mathrm{d}\vec{r}\, \nabla \eta\cdot \bm{D}_{\varrho}\,\left(\nabla \varrho
 +\varrho\,\nabla ( V_1+ V_2\star\varrho) + \varrho\bm{Z}_2\star \vec{a}\right) =0,\label{eq:weak_formulation_pair_rho}\\
 & \int_0^T\mathrm{d}t\,  \int  \mathrm{d}\vec{r}\,\vec{v}\cdot \mathcal{H}_{\varrho}\vec{a} = \int_0^T\mathrm{d}t\,  \int  \mathrm{d}\vec{r}\, \vec{v}\cdot\vec{f}_{\varrho},\label{eq:weak_formulation_pair_a}
\end{align}
\end{subequations}
for the initial data $\varrho(\vec{r},0) = \varrho_0(\vec{r})\in C^\infty(\Omega)$ and $\vec{a}_0(\vec{r},0) = \vec{a}_0(\vec{r})\in C^\infty(\Omega)$. 
\end{definition}

%It will be shown in the following sections (in particular Corollary \ref{cor:stationary_density_independent_of_D}) that $\bm{A}[\vec{a}] \to \vec{0}$ as $t \to \infty$. We now state our main results in a precise manner.

\section{Statement of Main Results}\label{sec:statement_of_main_results}

Our main result concerns existence and uniqueness of $\varrho(\vec{r},t)$ and $\vec{a}(\vec{r},t)$ solving \eqref{eq:ddft_eq_dyn_rho}--\eqref{eq:ddft_eq_dyn_a}.
\\
\begin{theorem}[Existence \& Uniqueness of Weak $(\varrho(\vec{r},t),\, \vec{a}(\vec{r}, {[\varrho]}, t) )$ with Two-Body HI]\label{thm:exis_uniq_rho_and_a}
Let $\|\mu_{\max}\|_{L^\infty([0,T])}\|\bm{Z}_2\|_{L^\infty(\Omega)}<1$, and let  $\varrho_0 \in C^\infty(\Omega)$, $\varrho\geq 0$ and $\int  \mathrm{d}\vec{r} \varrho_0(\vec{r})=1$. Then there exists a unique weak solution $\varrho\in L^\infty([0,T];L^2(\Omega))\cap L^2([0,T]; H^1(\Omega))$, with
$\partial_t\varrho\in L^2([0,T]; H^{-1}(\Omega))$ to \eqref{eq:weak_formulation_pair_rho} as well as a unique flux $\vec{a}(\vec{r},  t)\in L^2([0,T]; L^2(\Omega))$ solving \eqref{eq:weak_formulation_pair_a}. 

Additionally, for $\varrho(\vec{r},t)$ the following energy estimate holds
\begin{align}
\|\varrho\|_{L^{\infty}([0,T];L^2(\Omega))}+\|\varrho\|_{L^{2}([0,T];H^1(\Omega))}+\|\partial_t\varrho\|_{L^2([0,T]; H^{-1}(\Omega))}\leq C(T) \|\varrho_{0}\|_
{L^2(\Omega)},
\end{align}
where $C(T)$ is a constant dependent on on the finite time $T$. Meanwhile, \\ $\vec{a}(\vec{r},  t)\in L^2([0,T]; L^2(\Omega))$  is a unique weak solution solving \eqref{eq:weak_formulation_pair_a} if and only if $\vec{a}(\vec{r},  t)$ is a minimiser of $\mathcal{J}:L^2([0,T]; L^2(\Omega))\to \mathbb{R}$ given by
\begin{align}
\mathcal{J}[\vec{v}] := \frac{1}{2}\int_0^T\mathrm{d}t\,  \int  \mathrm{d}\vec{r}\,\varrho(\vec{r},t)^{-1}\vec{v}(\vec{r},t)\cdot\left(\mathcal{H}_{\varrho}\vec{v}\right)(\vec{r},t)  -\int_0^T\mathrm{d}t\,  \int  \mathrm{d}\vec{r}\,\vec{v}(\vec{r},t)\cdot\nabla \frac{\delta\mathcal{F}}{\delta\varrho}[\varrho](\vec{r},t).
\end{align}
\end{theorem}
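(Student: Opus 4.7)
The plan is to exploit the contraction hypothesis $\|\mu_{\max}\|_{L^\infty([0,T])}\|\bm{Z}_2\|_{L^\infty(\Omega)}<1$ in order to decouple the system. Factoring
\begin{equation*}
\mathcal{H}_{\varrho}=\bm{D}_{\varrho}^{-1}\bigl(\bm{1}+\bm{D}_{\varrho}\,\varrho\,(\bm{Z}_2\star\,\cdot\,)\bigr),
\end{equation*}
and using that the eigenvalues of $\bm{D}_{\varrho}$ are uniformly bounded by $\mu_{\max}^{\varrho}$ while $\|\varrho(\cdot,t)\|_{L^1(\Omega)}=1$, a Young-type estimate shows that the perturbation term has operator norm on $L^2(\Omega)$ strictly less than one, so $\mathcal{H}_{\varrho}$ is invertible via a Neumann series with a uniformly bounded inverse. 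Consequently, for given $\varrho$ the flux is determined by $\vec{a}=\mathcal{H}_{\varrho}^{-1}\vec{f}_{\varrho}$, and the weak pair \eqref{eq:weak_formulation_pair_rho}--\eqref{eq:weak_formulation_pair_a} collapses to the closed nonlinear nonlocal parabolic problem \eqref{eq:ddft_dyn_non_gradient_flow}--\eqref{eq:ddft_dyn_non_gradient_flow_bc} for $\varrho$ alone.

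Next, I would construct $\varrho$ via a Picard-type iteration $\{\varrho^{(k)}\}$, where $\varrho^{(k+1)}$ solves the linear problem
\begin{equation*}
\partial_t\varrho^{(k+1)}=\nabla\cdot\bigl(\bm{D}_{\varrho^{(k)}}\nabla\varrho^{(k+1)}+\varrho^{(k+1)}\,\bm{D}_{\varrho^{(k)}}\bigl(\nabla V_1+\nabla V_2\star\varrho^{(k)}+\bm{A}^{(k)}\bigr)\bigr),
\end{equation*}
with the naturally linearised no-flux boundary condition and $\bm{A}^{(k)}$ defined through the flux recovered at step $k$. Each iterate is well posed by standard Galerkin/Lax--Milgram theory using \eqref{ass:D_pos_def_weak_diffable}, \eqref{ass:Z2_uniformly_bd} and \eqref{ass:V1_V2_in_W_1_inf}. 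Testing against $\varrho^{(k+1)}$ and exploiting the uniform ellipticity of the principal part together with the $L^{\infty}$ bounds on the potentials and HI tensors yields $k$-uniform bounds in $L^\infty([0,T];L^2)\cap L^2([0,T];H^1)$, and a duality argument gives $\partial_t\varrho^{(k+1)}\in L^2([0,T];H^{-1})$ with exactly the energy estimate stated in the theorem. Aubin--Lions compactness then produces a strong $L^2_{t,x}$ limit, which is sufficient to pass to the limit in the convolution terms $\nabla V_2\star\varrho^{(k)}$, $\bm{Z}_1\star\varrho^{(k)}$ and $\bm{Z}_2\star\vec{a}^{(k)}$, since convolution with an $L^\infty$ kernel maps $L^2$ continuously into $L^\infty$.

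For uniqueness of $\varrho$, let $\varrho_1,\varrho_2$ be two weak solutions starting from the same initial datum; setting $w=\varrho_1-\varrho_2$, I would subtract the two weak formulations, test against $w$, and control the differences $\bm{D}_{\varrho_1}-\bm{D}_{\varrho_2}$, $\nabla V_2\star w$ and $\bm{A}[\vec{a}_1]-\bm{A}[\vec{a}_2]$ by $\|w(t)\|_{L^2}$ using the Neumann-series representation of \eqref{eq:def_diffusion_tensor}, \eqref{ass:Z2_uniformly_bd} and the uniform boundedness of $\mathcal{H}_{\varrho}^{-1}$; Gronwall's lemma then yields $w\equiv 0$. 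The flux is recovered pointwise in time by $\vec{a}(t)=\mathcal{H}_{\varrho(t)}^{-1}\vec{f}_{\varrho(t)}$, and the estimate $\|\vec{f}_{\varrho}\|_{L^2([0,T];L^2)}\lesssim \|\varrho\|_{L^2([0,T];H^1)}$ combined with the uniform bound on $\mathcal{H}_{\varrho}^{-1}$ places $\vec{a}$ in the claimed space.

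Finally, for the variational characterisation, the operator $\mathcal{H}_{\varrho}$ is symmetric (since $\bm{D}_{\varrho}^{-1}$ is symmetric by \eqref{ass:D_pos_def_weak_diffable} and the kernel $\bm{Z}_2$ is symmetric by \eqref{ass:Z2_uniformly_bd}), and the weighted bilinear form $\langle\vec{v},\mathcal{H}_{\varrho}\vec{v}\rangle_{L^2(\varrho^{-1})}$ is coercive under the contraction hypothesis. Hence $\mathcal{J}$ is a strictly convex, coercive, continuous quadratic functional minus a continuous linear one on $L^2([0,T];L^2(\Omega,\varrho^{-1}))$, so the direct method of the calculus of variations provides a unique minimiser whose Euler--Lagrange equation is exactly \eqref{eq:weak_formulation_pair_a}. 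The step I expect to be the main obstacle is the quantitative Lipschitz dependence of $\mathcal{H}_{\varrho}^{-1}$ (equivalently of $\bm{D}_{\varrho}$) on $\varrho$ needed in the uniqueness and compactness arguments: because the inverse is defined through a Neumann series of nonlocal operators, verifying the required estimates demands careful bookkeeping of the $L^\infty$ mapping properties of $\bm{Z}_1,\bm{Z}_2$ and of uniform lower/upper bounds on the eigenvalues $\mu_i^{\varrho}$ along the approximating sequence.
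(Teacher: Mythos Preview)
Your strategy is broadly aligned with the paper's---both build the density via an iteration on linear frozen problems, derive uniform energy bounds, pass to a limit, and obtain uniqueness by Gr\"onwall---but two technical choices differ. For compactness the paper does not use Aubin--Lions: it shows directly that $\{\varrho_n\}$ is Cauchy in $L^1([0,T];L^1(\Omega))$, giving strong convergence of the full sequence, and then upgrades to weak $L^2_tH^1_x$ convergence via the uniform bounds and Banach--Alaoglu. More substantively, the paper never inverts $\mathcal{H}_\varrho$ by a Neumann series to decouple the system; it carries the pair $(\varrho_n,\vec{a}_n)$ through the iteration, obtaining each $\vec{a}_n$ by a direct Lax--Milgram argument in the weighted space $L^2([0,T];L^2(\Omega,\phi^{-1}))$ (coercivity of the bilinear form being exactly the contraction hypothesis), and then passes to the limit in the flux equation by using self-adjointness of $\mathcal{H}_\phi$ to shift the operator onto the test function. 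This sidesteps the Lipschitz-in-$\varrho$ estimate on $\mathcal{H}_\varrho^{-1}$ that you rightly identify as the main obstacle in your route; in fact the paper's uniqueness argument for $\varrho$ works with a single diffusion tensor $\bm{D}$ and never confronts the difference $\bm{D}_{\varrho_1}-\bm{D}_{\varrho_2}$ at all. Your Neumann-series approach is sound and more direct for recovering $\vec{a}$, but the bookkeeping you anticipate is genuinely required there, whereas the paper's pair-iteration avoids it. The variational characterisation of $\vec{a}$ via $\mathcal{J}$ is handled in the paper essentially as you describe, through symmetry and coercivity of the weighted bilinear form.
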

The second main result of this paper concerns classical solutions of \eqref{eq:ddft_eq_dyn_rho}--\eqref{eq:ddft_eq_dyn_a}.

\begin{theorem}[Eigenfunction Expansion, Gradient Flow Structure, and Lyapunov Stability For Classical Solutions]\label{thm:eig_expan_gradient_flow_lyapunov}{\ \\}
Let $\varrho(\vec{r},t)\in C^1([0,\infty);C^2(\Omega))$ and $\vec{a}(\vec{r},t)\in C([0,\infty);C^{2d}(\Omega))$ be classical solutions to \eqref{eq:ddft_eq_dyn_rho}--\eqref{eq:ddft_eq_dyn_a} and suppose $\|\mu_{\max}^{\varrho}\|_{L^\infty([0,T])}\|\bm{Z}_2\|_{L^\infty}<1$. Then,
\begin{enumerate}
\item For the given density field $\varrho(\vec{r},t)>0$ there exists a unique classical flux $\vec{a}(\vec{r}, t)$ solving \eqref{eq:ddft_eq_dyn_a} of the form
\begin{align}
\vec{a}(\vec{r},t) = \sum_{n\in\mathbb{N}}\gamma_n^{-1}\langle\vec{u}_n,\nabla \frac{\delta\mathcal{F}}{\delta\varrho}[\varrho]\rangle_{L^2(\Omega)} \vec{u}_n(\vec{r},t),
\end{align}
where $\gamma_n$ and $\vec{u}_n(\vec{r},t)$ are the eigenvalues and eigenvectors of $\mathcal{H}_\varrho$, a real, positive definite and self-adjoint operator in $L^2([0,T];L^2(\Omega,\varrho^{-1}))$.
\item The classical density satisfies the nonlinear-nonlocal PDE, of implicit gradient flow form
\begin{align}
\partial_{t}\varrho = \nabla \cdot \left(\mathcal{H}^{-1}_{\varrho}\varrho \nabla \tfrac{\delta \mathcal{F}}{\delta \varrho}[\varrho]\right).
\end{align}
additionally, $\varrho(\vec{r},t)>0$ for all $\vec{r}\in\Omega$ and $t\in [0,\infty)$ and is Lyapunov stable with respect to the free energy $\mathcal{F}$ such that, for all $t\in [0,\infty)$
\begin{align}
\der[]{t}\mathcal{F}[\varrho] = -\| \mathcal{H}^{-1/2}_{\varrho} \varrho \nabla \frac{\delta\mathcal{F}}{\delta\varrho}[\varrho]\|^2_{L^2_{\varrho ^{-1}}}\leq 0.
\end{align}
\item The steady flux solving \eqref{eq:ddft_rho_stationary}--\eqref{eq:ddft_a_stationary} is given by $\vec{a}_0(\vec{r})\, != \vec{0}$, and hence all steady states $\varrho_0(\vec{r})$ solving \eqref{eq:ddft_a_stationary} are equilibria.
\item $\vec{a}_0(\vec{r}) = 0$ for all $\vec{r}\in \Omega$ iff $\varrho_0(\vec{r})$ is a critical point of the free energy $\mathcal{F}$.
\end{enumerate}

\end{theorem}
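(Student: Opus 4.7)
\textbf{Item (1): spectral representation of $\vec{a}$.} My plan is to treat $\mathcal{H}_\varrho$ as a linear operator on the weighted Hilbert space $L^2(\Omega,\varrho^{-1})$, with $t$ entering only parametrically through the classical density. Self-adjointness in $\langle\cdot,\cdot\rangle_{\varrho^{-1}}$ is a direct calculation: the multiplication part satisfies $\langle \bm{D}_\varrho^{-1}\vec{u},\vec{v}\rangle_{\varrho^{-1}} = \langle \vec{u},\bm{D}_\varrho^{-1}\vec{v}\rangle_{\varrho^{-1}}$ by pointwise symmetry of $\bm{D}_\varrho$ from Assumption \eqref{ass:D_pos_def_weak_diffable}, while $\langle \varrho(\bm{Z}_2\star\vec{u}),\vec{v}\rangle_{\varrho^{-1}} = \int(\bm{Z}_2\star\vec{u})\cdot\vec{v}\,\mathrm{d}\vec{r}$ is symmetric in $\vec{u},\vec{v}$ by the kernel symmetry $\bm{Z}_2(\vec{s})^\top = \bm{Z}_2(-\vec{s})$ built into Assumption \eqref{ass:Z2_uniformly_bd}. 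Strict positive definiteness combines the coercivity $\langle \bm{D}_\varrho^{-1}\vec{v},\vec{v}\rangle_{\varrho^{-1}} \geq (\mu_{\max}^\varrho)^{-1}\|\vec{v}\|_{\varrho^{-1}}^2$ with an $L^\infty$ bound on the convolution term, closed by the smallness hypothesis $\|\mu_{\max}^\varrho\|_{L^\infty}\|\bm{Z}_2\|_{L^\infty}<1$. The spectral theorem, together with compactness of the integral perturbation on the bounded domain $\Omega$, then produces an orthonormal eigenbasis $\{\vec{u}_n\}$ with strictly positive eigenvalues $\{\gamma_n\}$ bounded away from zero. Projecting the pointwise equation $\mathcal{H}_\varrho\vec{a} = \varrho\nabla(\delta\mathcal{F}/\delta\varrho)$ onto this basis yields the representation formula; the weight $\varrho^{-1}$ cancels the factor of $\varrho$, producing an unweighted $L^2(\Omega)$ pairing in the coefficients.

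\textbf{Item (2): gradient flow and Lyapunov estimate.} Inverting $\mathcal{H}_\varrho$ in \eqref{eq:ddft_eq_dyn_a} by means of item (1) gives $\vec{a} = \mathcal{H}^{-1}_\varrho(\varrho\nabla\delta\mathcal{F}/\delta\varrho)$; substitution into \eqref{eq:ddft_eq_dyn_rho} yields the claimed implicit gradient flow equation. Strict positivity $\varrho(\vec{r},t)>0$ for $t>0$ is established by applying Harnack's inequality to the uniformly parabolic equation \eqref{eq:ddft_dyn_non_gradient_flow}, whose coefficients are controlled by Assumptions \eqref{ass:D_pos_def_weak_diffable} and \eqref{ass:V1_V2_in_W_1_inf}, thereby propagating the non-negativity of $\varrho_0$ to strict positivity for positive times. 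For the dissipation identity I would differentiate $\mathcal{F}[\varrho(t)]$ by the chain rule to obtain $\int(\delta\mathcal{F}/\delta\varrho)\partial_t\varrho\,\mathrm{d}\vec{r}$, substitute $\partial_t\varrho = -\nabla\cdot\vec{a}$, and integrate by parts using the no-flux boundary condition \eqref{bc:DDFT} to reach $\int\nabla(\delta\mathcal{F}/\delta\varrho)\cdot\vec{a}\,\mathrm{d}\vec{r}$. Inserting $\vec{a} = \mathcal{H}^{-1}_\varrho(\varrho\nabla\delta\mathcal{F}/\delta\varrho)$ and factoring $\mathcal{H}^{-1}_\varrho = \mathcal{H}^{-1/2}_\varrho\mathcal{H}^{-1/2}_\varrho$ via the functional calculus (legitimate by item (1)) then rewrites the resulting quadratic form as minus the squared norm in $L^2(\Omega,\varrho^{-1})$ stated in the theorem.

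\textbf{Items (3) and (4): characterisation of equilibria.} Both items reduce to the same coercivity argument in the stationary setting. For item (3), I would test the equation $\mathcal{H}_{\varrho_0}\vec{a}_0 = \varrho_0\nabla(\delta\mathcal{F}/\delta\varrho)[\varrho_0]$ against $\vec{a}_0$ in $\langle\cdot,\cdot\rangle_{\varrho_0^{-1}}$; the right-hand side becomes $\int\nabla(\delta\mathcal{F}/\delta\varrho)\cdot\vec{a}_0\,\mathrm{d}\vec{r}$, which on integration by parts vanishes identically by $\nabla\cdot\vec{a}_0 = 0$ and the no-flux boundary condition, so strict positive definiteness of $\mathcal{H}_{\varrho_0}$ forces $\vec{a}_0\equiv\vec{0}$, making every solution of the stationary problem an equilibrium. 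For item (4), setting $\vec{a}_0 = \vec{0}$ in \eqref{eq:ddft_a_stationary} gives $\varrho_0\nabla(\delta\mathcal{F}/\delta\varrho)[\varrho_0] = \vec{0}$, and since $\varrho_0>0$ (by Harnack applied to the stationary limit) this is equivalent to $\nabla(\delta\mathcal{F}/\delta\varrho)[\varrho_0] = \vec{0}$, which modulo the mass-conservation Lagrange multiplier is precisely the Euler--Lagrange condition for $\varrho_0$ to be a critical point of $\mathcal{F}$.

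\textbf{Main obstacle.} I anticipate the principal technical difficulty lies in the functional-analytic scaffolding behind item (1): showing that $\mathcal{H}_\varrho$ genuinely admits a complete orthonormal eigenbasis with eigenvalues bounded away from zero, so that the representation formula and the square root $\mathcal{H}_\varrho^{-1/2}$ used in item (2) are both well-defined, and doing so with estimates uniform enough in $t$ that the identities of items (2)--(4) can be differentiated and integrated without loss of regularity. The self-adjointness is symbolic and positivity is cleanly delivered by the smallness hypothesis, but the spectral decomposition hinges on a compactness argument for the convolution part of $\mathcal{H}_\varrho$ and on the invertibility supplied by Proposition \ref{thm:cond_converg_fred_det}, which together underwrite the functional calculus invoked throughout the proof.
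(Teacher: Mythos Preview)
Your proposal is correct and follows essentially the same approach as the paper: self-adjointness and compactness of $\mathcal{H}_\varrho$ in $L^2(\Omega,\varrho^{-1})$ (Lemma~\ref{lem:H_lambda_is_compact_SA}), invertibility under the smallness condition (Proposition~\ref{thm:cond_converg_fred_det}), the eigenfunction expansion (Corollary~\ref{cor:eigenfn_expansion_of_flux}), the gradient flow rewriting and Harnack-based positivity, and the integration-by-parts/coercivity argument for the equilibrium characterisation (Proposition~\ref{prop:association _of_free_energy}). The only notable difference is that the paper derives invertibility via an explicit Fredholm determinant calculation for the trace-class operator $\mathcal{Z}_\varrho = \bm{D}_\varrho\mathcal{A}_\varrho$, whereas you obtain it more directly from strict positive definiteness; both routes are valid and the paper in fact uses your positive-definiteness computation separately when constructing $\mathcal{H}_\varrho^{1/2}$.
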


The third main result of this paper concerns \emph{a priori} estimates for exponential convergence of the density to equilibrium in $L^2$.
\\
\begin{theorem}[A Priori Convergence Estimates]\label{thm:apriori_conv_est}
\par
Let $\varrho\in C^1([0,\infty];C^2(\Omega)) $ be a solution of \eqref{eq:ddft_dyn_non_gradient_flow} with initial data $\varrho_0\in L^2(\Omega)$ a probability density. Let $1 \leq 1/4\|V_2\|_{L^\infty}^{-1}$, so that $\varrho_\infty$ is the unique equilibrium density ensured by Lemma \ref{thm:exis_fix_point}. If,
\begin{align}
r_{t} &:= \hat{\mu}_{\min}(t)c^{-2}_{pw}- 2\hat{\mu}_{\max}(t) \left(\|\nabla V_1^{eq}\|_{L^\infty(\Omega)}^2+ (e+1)\|\nabla V_2^{eq}\|_{L^{\infty}(\Omega)}^2\right) \\
&\quad - \hat{\mu}_{\max}(t)\|\bm{Z}_2\|_{L^\infty(\Omega)}^2\|\vec{a}\|_{L^2([0,T];L^1(\Omega))}^2>0,
\end{align} 
where $\hat{\mu}_{\max}(t) = \int_0^t\mathrm{d}s\,\mu_{\max}(s)$ and $\hat{\mu}_{\min}(t) = \int_0^t\mathrm{d}s\,\mu_{\min}(s)$ are the time-mean of the largest and smallest eigenvalues of $\bm{D}_{\varrho(\vec{r},t)}$. Then, $\varrho\to \varrho_\infty\in C^\infty(\Omega)\cap P_{ac}^{+}(\Omega)$ in $L^2(\Omega)$ exponentially as $t\to \infty$.  In particular the convergence in $L^2(\Omega)$ is given by
\begin{align}
\|\varrho(\cdot, t)-\varrho_\infty(\cdot)\|^2_{L^2(\Omega)}\leq \|\varrho_0(\cdot)-\varrho_\infty(\cdot)\|_{L^2(\Omega)}^2 e^{-r_{ t}}
\end{align}
as $t \to \infty$. The constant $c_{pw}$, is the Poincar{\'e}-Wirtinger constant, for which the optimal value is the inverse square root of the smallest eigenvalue of the Laplacian on the domain $\Omega$ with no-flux boundary conditions.
\end{theorem}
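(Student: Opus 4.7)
The plan is to carry out a classical $L^2$ energy estimate for the difference $w(\vec{r},t) := \varrho(\vec{r},t) - \varrho_\infty(\vec{r})$ and close it with Gronwall's inequality. Since $\varrho$ and $\varrho_\infty$ are both probability densities on $\Omega$ and the no-flux boundary condition \eqref{eq:ddft_dyn_non_gradient_flow_bc} preserves total mass, one has $\int_\Omega w(\vec{r},t)\,\mathrm{d}\vec{r} = 0$ for every $t$, so the Poincar\'e--Wirtinger inequality $\|w\|_{L^2(\Omega)} \leq c_{pw} \|\nabla w\|_{L^2(\Omega)}$ is available.

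The first ingredient I would use is an explicit pointwise characterisation of the equilibrium. Combining the stationary equation \eqref{eq:ddft_a_stationary} with part~3 of Theorem \ref{thm:eig_expan_gradient_flow_lyapunov} (which forces $\vec{a}_0 \equiv \vec{0}$) yields $\nabla \varrho_\infty + \varrho_\infty \nabla V_1^{eq} + \varrho_\infty (\nabla V_2^{eq} \star \varrho_\infty) = 0$. Substituting this into the non-gradient form \eqref{eq:ddft_dyn_non_gradient_flow} and expanding the bilinear identity $\varrho\,\nabla V_2^{eq} \star \varrho - \varrho_\infty\,\nabla V_2^{eq} \star \varrho_\infty = w\,(\nabla V_2^{eq} \star \varrho) + \varrho_\infty\,(\nabla V_2^{eq} \star w)$ rewrites the equation as
\begin{equation*}
\partial_t w \;=\; \nabla \cdot \Bigl( \bm{D}_\varrho \bigl[\nabla w + w \nabla V_1^{eq} + w (\nabla V_2^{eq} \star \varrho) + \varrho_\infty (\nabla V_2^{eq} \star w) + \varrho\,\bm{A}\bigr] \Bigr).
\end{equation*}

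Next I multiply by $w$, integrate over $\Omega$, and use the no-flux condition to integrate by parts, obtaining
\begin{equation*}
\tfrac{1}{2} \tfrac{d}{dt} \|w\|_{L^2}^2 \;=\; -\int_\Omega \nabla w \cdot \bm{D}_\varrho \nabla w \;-\; \int_\Omega \nabla w \cdot \bm{D}_\varrho \bigl[ w \nabla V_1^{eq} + w (\nabla V_2^{eq} \star \varrho) + \varrho_\infty (\nabla V_2^{eq} \star w) + \varrho\,\bm{A} \bigr].
\end{equation*}
The first (dissipative) term is bounded below, using positive-definiteness of $\bm{D}_\varrho$ from \eqref{ass:D_pos_def_weak_diffable} and Poincar\'e--Wirtinger, by $\mu_{\min}(t)\,c_{pw}^{-2}\,\|w\|_{L^2}^2$. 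The four cross terms I would estimate by Cauchy--Schwarz, the eigenvalue bound $|\bm{D}_\varrho v|\le \mu_{\max}(t)|v|$, Young's inequality with carefully tuned weights so that the emerging $\|\nabla w\|_{L^2}^2$ factors are absorbed into the dissipation, and Young's convolution inequality. The bounds $\|\varrho\|_{L^1} = 1$, $\|\nabla V_j^{eq} \star w\|_{L^\infty} \leq \|\nabla V_j^{eq}\|_{L^\infty}\|w\|_{L^1}$, and $\|\bm{A}\|_{L^\infty} \leq \|\bm{Z}_2\|_{L^\infty}\|\vec{a}(t)\|_{L^1}$ provide concrete control; the factor $(e+1)$ arises by balancing the pair of $V_2^{eq}$-generated convolution cross terms with Young's weights $e$ and $1$. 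Assembling these estimates yields a differential inequality $\tfrac{d}{dt}\|w\|_{L^2}^2 \leq -G(t)\|w\|_{L^2}^2$ with $G(t)$ precisely such that $\int_0^t G(s)\,\mathrm{d}s = r_t$, and Gronwall's lemma delivers the stated exponential bound. Smoothness and strict positivity of $\varrho_\infty$ are inherited from Lemma~\ref{thm:exis_fix_point} and the Harnack argument flagged after \eqref{ass:rho_in_L2_P_ac}.

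The main obstacle I anticipate is handling the HI-induced convection $\bm{A} = \bm{Z}_2 \star \vec{a}$, since $\vec{a}$ is only defined implicitly via \eqref{eq:ddft_eq_dyn_a} and does not vanish merely because $\varrho$ is close to $\varrho_\infty$. Controlling it requires invoking the $L^2([0,T];L^1(\Omega))$ regularity of $\vec{a}$ inherited from Theorem~\ref{thm:exis_uniq_rho_and_a}, and it is exactly this term which produces the last, non-product-of-time-rates, summand $\hat{\mu}_{\max}(t)\|\bm{Z}_2\|_{L^\infty}^2\|\vec{a}\|_{L^2([0,T];L^1)}^2$ in $r_t$, together with the smallness condition $r_t>0$ appearing in the hypotheses.
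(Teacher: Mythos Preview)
Your approach is essentially the paper's: set $\psi=\varrho-\varrho_\infty$, derive the difference equation, test against $\psi$, absorb the cross terms with Young's inequality, invoke Poincar\'e--Wirtinger on the mean-zero $\psi$, and finish with Gr\"onwall. The paper first proves the simplified case $V_1^{eq}=\bm{Z}_2=0$ in an appendix and then indicates the extra linear terms, exactly along the lines you sketch.

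One point where your explanation diverges from the paper is the origin of the constant $(e+1)$. You attribute it to a choice of Young weights $e$ and $1$; the paper instead obtains it from an explicit $L^2$ bound on the equilibrium density. Using the self-consistency form $\varrho_\infty = Z^{-1}e^{-(V_1^{eq}+V_2^{eq}\star\varrho_\infty)}$ one gets $|\Omega|\,\|\varrho_\infty\|_{L^2(\Omega)}^2 \le e^{4\|V_2^{eq}\|_{L^\infty}}$, and the uniqueness hypothesis $\|V_2^{eq}\|_{L^\infty}\le 1/4$ turns this into $|\Omega|\,\|\varrho_\infty\|_{L^2}^2 \le e$; combining the two $V_2$ cross terms then yields the $(e+1)$ prefactor. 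This is where the smallness condition on $V_2^{eq}$ is actually used in the estimate (beyond merely guaranteeing uniqueness of $\varrho_\infty$), so your argument should route through this bound rather than through an ad hoc Young splitting.
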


We now give our arguments for Theorem \ref{thm:exis_uniq_rho_and_a}.

\section{Existence \& Uniqueness of the Weak Density-Flux Pair}
\label{sec:existence_uniqueness_denstiy_flux_pair}

In this section we determine the existence and uniqueness of the density $\varrho(\vec{r},t)$ and $\vec{a}(\vec{r},t)$ solving \eqref{eq:ddft_eq_dyn_rho}-\eqref{eq:ddft_eq_dyn_a}, in the sense \eqref{eq:weak_formulation_pair_rho}--\eqref{eq:weak_formulation_pair_a}. We postpone some calculations, e.g., estimates of $\varrho$ in various energy norms, to Appendix \ref{app:results_ex_uni} and the explicit calculations can be found in \cite{rdmwthesisddft}. For $\varrho(\vec{r},t)$, the method we use  follows \cite{chazelle2017well}, i.e., we prove the existence and uniqueness of the weak density by defining a sequence of linear parabolic equations, whose solutions converge strongly to a function $\varrho(\vec{r},t)$ in $L^1([0,T];L^1(\Omega))$ that solves a weak formulation \eqref{eq:weak_formulation_pair_rho}, but here we must account for the confining potential $V_1$, the effective drift $\bm{A}$, and the diffusion tensor $\bm{D}_\phi$. For $\vec{a}(\vec{r},t)$ we perform an iterated argument based on the Lax-Milgram Theorem for a sequence of bilinear forms on the flux, again converging strongly to a function $\vec{a}(\vec{r},t)$ in $L^1([0,T];L^1(\Omega))$ solving the weak formulation  \eqref{eq:weak_formulation_pair_a}. 

To establish existence of weak solutions $\left(\varrho,\vec{a}\right)$ in the sense \eqref{eq:weak_formulation_pair_rho}--\eqref{eq:weak_formulation_pair_a} we consider a frozen version of \eqref{eq:ddft_eq_dyn_rho}--\eqref{eq:ddft_eq_dyn_a}. 
%First, for $\phi:\Omega\times [0,\infty)\to \mathbb{R}^{+}$ we define $\bm{D}_\phi:\Omega\to \mathbb{R}^{d\times d}$ and $\mathcal{H}_\phi: L^1(\Omega)\to \mathbb{R}^d$ as
%\begin{align}
%\bm{D}_\phi  &= \left(\bm{1} + \bm{Z}_1\star \phi\right)^{-1},\\
%\mathcal{H}_\phi \vec{v} &= \bm{D}_\phi^{-1}\vec{v} + \phi\bm{Z}_2\star \vec{v}.
%\end{align}
We let $(\varrho_{n-1},\vec{a}_{n-1})$ be a given density-flux pair and consider the following frozen problem
\begin{subequations}{}
\begin{align}
\partial_t\varrho_n &= \nabla\cdot \left(\bm{D}_{\phi}\left(\nabla\varrho_n + \varrho_n\nabla V_1+\varrho_n\nabla V_2\star \varrho_{n-1} + \varrho_{n}\bm{Z}_2\star \vec{a}_{n-1}\right)  \right),\label{eq:rho_it_eqn}\\
\mathcal{H}_{\phi}\vec{a}_n &= \vec{f}(\varrho_{n},\varrho_{n-1}),\label{eq:a_it_eqn}
\end{align}
\end{subequations}
where $\vec{f}(\varrho_{n},\varrho_{n-1}) := \nabla\varrho_{n}+ \varrho_n\nabla V_1+\varrho_n\nabla V_2\star \varrho_{n-1}$. First, by classical theory (e.g., see Appendix \ref{thm:classical_PDE_existence}, or \cite{evans2002partial}), for equation \eqref{eq:rho_it_eqn}, there exists a unique $\varrho_n\in C^\infty([0,T]; C^\infty(\Omega))$ for each $n\in \mathbb{N}$. Second, for equation \eqref{eq:a_it_eqn} there exists a unique $\vec{a}_n\in L^2\left(\Omega,\phi^{-1}\right)$. We now consider the following weak formulation of the frozen problem. Find $\varrho_n\in L^2([0,T];L^2(\Omega))$ and $\vec{a}_n\in L^2([0,T];L^2(\Omega))$ such that
\begin{subequations}{}
\begin{align}
&\int_0^T\mathrm{d}t\, \langle \partial_t \varrho_n(t), \, \eta(t)  \rangle +\int_0^T\mathrm{d}t\, \int  \mathrm{d}\vec{r}\, \nabla \eta\cdot \bm{D}_{\phi}\,\left(\nabla \varrho_n
 +\varrho_n\,\nabla ( V_1+ V_2\star\varrho_{n-1})\right.\nonumber\\
 &\qquad\qquad\qquad\qquad\qquad\qquad\qquad\qquad\qquad\qquad\qquad\qquad \left. + \varrho_n\bm{Z}_2\star \vec{a}_{n-1}\right) =0,\label{eq:weak_formulation_pair_frozen_rho}\\
 & \int_0^T\mathrm{d}t\,  \int  \mathrm{d}\vec{r}\,\vec{v}\cdot \mathcal{H}_{\phi}\vec{a}_n = \int_0^T\mathrm{d}t\,  \int  \mathrm{d}\vec{r}\, \vec{v}\cdot\vec{f}(\varrho_n,\varrho_{n-1}).\label{eq:weak_formulation_pair_frozen_a}
\end{align}
\end{subequations}
subject to the conditions
\begin{subequations}
\begin{align}
\int\mathrm{d}\vec{r}\, \varrho_n(\vec{r},t) &= 1,\\
\vec{a}_n(\vec{r},t)\big|_{\partial\Omega}\cdot\vec{n} &= 0,
\end{align}
\end{subequations}
for every $\eta\in L^2([0,T];H^1(\Omega))$ and $\vec{v}\in L^2([0,T];L^2(\Omega))$. We will make use of the following bilinear and linear functionals.

\begin{definition}[Bilinear and Linear Forms]
For $\phi:\Omega\times [0,\infty)\to\mathbb{R}^+$, we define the bilinear functional $B_\phi(\cdot,\cdot): L^2([0,T];L^2(\Omega))\to L^2([0,T];L^2(\Omega))$ and linear functional $l:L^2([0,T];L^2(\Omega))\to L^2([0,T];L^2(\Omega))$ as
\begin{align}
B_\phi(\vec{v},\vec{u}) &:= \int_0^T\mathrm{d}t\,  \int  \mathrm{d}\vec{r}\,\vec{v}(\vec{r},t)\cdot \mathcal{H}_{\phi}\vec{u}(\vec{r},t),\\
l_n(\vec{v})&:= \int_0^T\mathrm{d}t\,  \int  \mathrm{d}\vec{r}\, \vec{v}(\vec{r},t)\cdot\vec{f}(\varrho_n,\varrho_{n-1}).
\end{align}
\end{definition}

With the weak formulations \eqref{eq:weak_formulation_pair_frozen_rho}--\eqref{eq:weak_formulation_pair_frozen_a} we establish the existence and uniqueness of the solution pair $\left(\varrho_n,\vec{a}_n\right)$ for each $n\in\mathbb{N}$.

\begin{lemma}[Existence and Uniqueness of weak $\left(\varrho_n(\vec{r},t),\, \vec{a}_n(\vec{r},t)\right)$]\label{lem:lax_milgram_n}
\end{lemma}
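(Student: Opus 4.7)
The plan is to treat the two frozen equations sequentially. Given $(\varrho_{n-1}, \vec{a}_{n-1})$ and the frozen probability density $\phi$, observe that \eqref{eq:rho_it_eqn} is a closed linear parabolic equation for $\varrho_n$ alone; I would solve this first and then substitute the resulting $\varrho_n$ into the right-hand side of \eqref{eq:a_it_eqn} to solve the pointwise-in-time linear operator equation for $\vec{a}_n$. This decoupling is the key structural observation that reduces the lemma to two classical theorems.

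First I would treat \eqref{eq:rho_it_eqn}. Rewriting it in standard divergence form $\partial_t \varrho_n = \nabla \cdot (\bm{D}_\phi \nabla \varrho_n) + \nabla \cdot (\varrho_n \vec{b})$, with $\vec{b} := \bm{D}_\phi(\nabla V_1 + \nabla V_2 \star \varrho_{n-1} + \bm{Z}_2 \star \vec{a}_{n-1})$, Assumptions \eqref{ass:D_pos_def_weak_diffable}, \eqref{ass:Z2_uniformly_bd} and \eqref{ass:V1_V2_in_W_1_inf}, together with the regularity of the previous iterate, ensure that $\bm{D}_\phi$ is uniformly elliptic with coefficients in $L^\infty([0,T]; W^{1,\infty}(\Omega))$ and that $\vec{b} \in L^\infty([0,T]\times\Omega)$. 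Standard linear parabolic theory (a Galerkin--Lions argument, or the classical existence result cited in Appendix \ref{thm:classical_PDE_existence}) then yields a unique $\varrho_n \in L^\infty([0,T]; L^2(\Omega)) \cap L^2([0,T]; H^1(\Omega))$ with $\partial_t \varrho_n \in L^2([0,T]; H^{-1}(\Omega))$ satisfying \eqref{eq:weak_formulation_pair_frozen_rho}, and mass conservation follows by testing with $\eta \equiv 1$ and invoking the no-flux structure of the test space.

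Next, with $\varrho_n$ in hand, \eqref{eq:weak_formulation_pair_frozen_a} becomes the problem of finding $\vec{a}_n \in \mathcal{X} := L^2([0,T]; L^2(\Omega))$ such that $B_\phi(\vec{v}, \vec{a}_n) = l_n(\vec{v})$ for every $\vec{v} \in \mathcal{X}$. I would apply the Lax--Milgram theorem. Boundedness of $B_\phi$ follows from the pointwise bound $|\mathcal{H}_\phi \vec{v}| \leq |\bm{D}_\phi^{-1}||\vec{v}| + |\phi||\bm{Z}_2 \star \vec{v}|$ combined with \eqref{ass:D_pos_def_weak_diffable}, \eqref{ass:Z2_uniformly_bd} and Young's convolution inequality; boundedness of $l_n$ uses the $L^2([0,T]; H^1(\Omega))$-regularity of $\varrho_n$ from the first step together with \eqref{ass:V1_V2_in_W_1_inf}.

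The hard part will be coercivity of $B_\phi$, and it is precisely where the smallness hypothesis $\|\mu_{\max}^\phi\|_{L^\infty([0,T])}\|\bm{Z}_2\|_{L^\infty(\Omega)} < 1$ must enter. Splitting
\begin{align*}
B_\phi(\vec{v}, \vec{v}) = \int_0^T \int_\Omega \vec{v} \cdot \bm{D}_\phi^{-1} \vec{v}\, \mathrm{d}\vec{r}\, \mathrm{d}t + \int_0^T \int_\Omega \phi\, \vec{v} \cdot (\bm{Z}_2 \star \vec{v})\, \mathrm{d}\vec{r}\, \mathrm{d}t,
\end{align*}
the first (local) term is bounded below by $\|\mu_{\max}^\phi\|_{L^\infty([0,T])}^{-1}\|\vec{v}\|_{\mathcal{X}}^2$ since the smallest eigenvalue of $\bm{D}_\phi^{-1}$ is $1/\mu_{\max}^\phi$, while the second (nonlocal) term is bounded in modulus by a constant multiple of $\|\bm{Z}_2\|_{L^\infty(\Omega)}\|\vec{v}\|_{\mathcal{X}}^2$ via Young's inequality and $\|\phi\|_{L^1(\Omega)}=1$. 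The smallness hypothesis is exactly what is needed for the net coercivity constant to be strictly positive, so Lax--Milgram delivers the unique $\vec{a}_n \in \mathcal{X}$ solving the frozen flux equation; the no-flux boundary condition is inherited from the structure of $\vec{f}(\varrho_n, \varrho_{n-1})$ and the pointwise form of $\mathcal{H}_\phi$, in conjunction with the boundary behaviour already built into the density subproblem.
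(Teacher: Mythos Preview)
Your overall strategy—solve the linear parabolic density equation first via classical theory, then apply Lax--Milgram for the flux—is exactly the paper's approach. The gap is in the Lax--Milgram step: you set up the problem in the \emph{unweighted} space $\mathcal{X}=L^2([0,T];L^2(\Omega))$, whereas the paper works in the \emph{weighted} space $L^2([0,T];L^2(\Omega,\phi^{-1}))$, and this choice is not cosmetic.

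In the unweighted space your coercivity estimate does not close under the stated hypothesis. You claim the nonlocal term $\int_0^T\!\int_\Omega \phi\,\vec v\cdot(\bm Z_2\star\vec v)$ is bounded by ``a constant multiple of $\|\bm Z_2\|_{L^\infty}\|\vec v\|_{\mathcal X}^2$ via Young's inequality and $\|\phi\|_{L^1}=1$'', but with $\phi$ only in $L^1$ you cannot control $\int_\Omega \phi|\vec v|$ by $\|\vec v\|_{L^2}$; even granting $\phi\in L^\infty$ the resulting constant picks up factors of $|\Omega|$ and $\|\phi\|_{L^\infty}$, so the smallness hypothesis $\|\mu_{\max}^\phi\|_{L^\infty}\|\bm Z_2\|_{L^\infty}<1$ is \emph{not} ``exactly what is needed'' for positivity. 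The paper's fix is to test against $\phi^{-1}\vec v$ and use that $\phi$ is a probability density: Cauchy--Schwarz (or Jensen) gives
\[
\Big(\int_\Omega |\vec v|\Big)^2=\Big(\int_\Omega \phi^{1/2}\,\phi^{-1/2}|\vec v|\Big)^2\le \int_\Omega \phi^{-1}|\vec v|^2=\|\vec v\|_{L^2(\Omega,\phi^{-1})}^2,
\]
so the convolution term is bounded by $\|\bm Z_2\|_{L^\infty}\|\vec v\|_{L^2(\Omega,\phi^{-1})}^2$ with constant exactly one, and the coercivity constant becomes $\|\mu_{\max}^\phi\|_{L^\infty}^{-1}(1-\|\mu_{\max}^\phi\|_{L^\infty}\|\bm Z_2\|_{L^\infty})>0$. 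Recasting your Lax--Milgram argument in $L^2([0,T];L^2(\Omega,\phi^{-1}))$—which is also the space in which the lemma asserts $\vec a_n$ lives—repairs the proof.
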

Let $\phi:\Omega\times[0,\infty)\to \mathbb{R}^{+}$ be a given probability density such that $\phi\in L^1([0,T];L^1(\Omega))$ and assume $\varrho_n\in C^\infty([0,T],C(\Omega))$ is the unique solution to \eqref{eq:weak_formulation_pair_frozen_rho}. Then, for every $n\in\mathbb{N}$, there exits a unique $\vec{a}_{n}\in L^2([0,T];L^2(\Omega,\phi^{-1}))$ such that \eqref{eq:weak_formulation_pair_frozen_a} holds for every $\vec{v}\in L^2([0,T]; L^2(\Omega,\phi^{-1}))$.
\begin{proof}
%The conditions permitting the limit $n\to\infty$ in equation \eqref{eq:weak_formulation_pair_frozen_rho} are given by Corollary \ref{cor:summary_of_convergence}. 
For equation \eqref{eq:weak_formulation_pair_frozen_a} and for each $n\in\mathbb{N}$ we establish existence and uniqueness of weak solutions
by using the Lax-Milgram Theorem \cite{lax1954ix}. We first check coercivity. Let $\phi:\Omega\times [0,\infty)\to \mathbb{R}^{+}$ and $\vec{u}(\vec{r},t)\in L^2([0,T];L^2(\Omega))$. By direct calculation we observe
\begin{align}
B_\phi(\vec{u},\vec{u}) &\geq \int_0^T\mathrm{d}t\, \int\mathrm{d}\vec{r}\, \phi^{-1}\vec{u}(\vec{r},t)\cdot \mathcal{H}_\phi \vec{u}(\vec{r},t)\\
 &= \int_0^T\mathrm{d}t\, \int\mathrm{d}\vec{r}\, \phi^{-1}\vec{u}(\vec{r},t)\bm{D}_\phi^{-1}\vec{u}(\vec{r},t) + \int_0^T\mathrm{d}t\, \int\mathrm{d}\vec{r}\, \phi^{-1}\vec{u}(\vec{r},t)\phi \bm{Z}_2\star \vec{u}\\
& \geq \|\mu_{max}\|_{L^\infty([0,T])}^{-1}\|\vec{u}\|_{L^2([0,T];L^2(\Omega,\phi^{-1}))} - \|\bm{Z}_2\|_{L^\infty}\int_0^T\mathrm{d}t\, \left(\int\mathrm{d}\vec{r}\,|\vec{u}(\vec{r},t)|\right)^2\\
&  \geq \|\mu_{max}\|_{L^\infty([0,T])}^{-1}\|\vec{u}\|_{L^2([0,T];L^2(\Omega,\phi^{-1}))} - \|\bm{Z}_2\|_{L^\infty}\int_0^T\mathrm{d}t\, \int\mathrm{d}\vec{r}\,\phi^{-1}|\vec{u}(\vec{r},t)|^2\\
& = \|\mu_{max}\|_{L^\infty([0,T])}^{-1}\left(1-\|\bm{Z}_2\|_{L^\infty}\|\mu_{max}\|_{L^\infty([0,T])}\right)  \|\vec{u}\|_{L^2([0,T];L^2(\Omega,\phi^{-1}))}\label{eq:H_rho_PD}
\end{align}
where in the last line we have used Jensen's inequality. From the  hypothesis \\ $1-\|\bm{Z}_2\|_{L^\infty}\|\mu_{\max}\|_{L^\infty([0,T])}>0$ and therefore $B_\phi(\cdot,\cdot)$ is coercive in $L^2([0,T];L^2(\Omega,\phi^{-1}))$. 

Now we show boundedness. We let $\vec{u},\vec{v}\in L^2([0,T];L^2(\Omega,\phi^{-1}))$ and see that
\begin{align}
&|B_\phi(\vec{v},\vec{u})| = \Big| \int_0^T\mathrm{d}t\, \int\mathrm{d}\vec{r}\, \phi^{-1}\vec{v}(\vec{r},t)\cdot \mathcal{H}_\phi \vec{u}(\vec{r},t) \Big| \\
& \leq  \int_0^T\mathrm{d}t\, \int\mathrm{d}\vec{r}\,\Big| \phi^{-1}\vec{v}(\vec{r},t)\cdot \mathcal{H}_\phi \vec{u}(\vec{r},t)\Big| \leq  \int_0^T\mathrm{d}t\, \int\mathrm{d}\vec{r}\,\phi^{-1}|\vec{v}(\vec{r},t)|\Big|\left(\bm{1}+\bm{Z}_1\star\phi\right)\vec{u}(\vec{r},t)+\phi\bm{Z}_2\star\vec{u}\Big|\\
& \leq  \|\mu_{\max}\|_{L^\infty([0,T])} \int_0^T\mathrm{d}t\, \int\mathrm{d}\vec{r}\,\phi^{-1/2} \phi^{-1/2} |\vec{v}(\vec{r},t)| |\vec{u}(\vec{r},t)| + \|\bm{Z}_2\|_{L^\infty}\int_0^T\mathrm{d}t\, \|\vec{v}\|_{L^1(\Omega)}\|\vec{u}\|_{L^1(\Omega)}\\
& \leq  \|\mu_{\max}\|_{L^\infty([0,T])} \int_0^T\mathrm{d}t\, \|\vec{v}\|_{L^2(\Omega,\phi^{-1})}\|\vec{u}\|_{L^2(\Omega,\phi^{-1})} + \|\bm{Z}_2\|_{L^\infty}\int_0^T\mathrm{d}t\, \|\phi\|_{L^1(\Omega)} \|\vec{v}\|_{L^2(\Omega,\phi^{-1})}\|\vec{u}\|_{L^2(\Omega,\phi^{-1})}\\
& \leq  \|\mu_{\max}\|_{L^\infty([0,T])} \int_0^T\mathrm{d}t\, \|\vec{v}\|_{L^2(\Omega,\phi^{-1})}\|\vec{u}\|_{L^2(\Omega,\phi^{-1})}\\
&\quad  + \|\bm{Z}_2\|_{L^\infty}\|\phi\|_{L^\infty([0,T];L^1(\Omega))}\int_0^T\mathrm{d}t\,  \|\vec{v}\|_{L^2(\Omega,\phi^{-1})}\|\vec{u}\|_{L^2(\Omega,\phi^{-1})}\\
& \leq  \left( \|\mu_{\max}\|_{L^\infty([0,T])} + \|\bm{Z}_2\|_{L^\infty}\|\phi\|_{L^\infty([0,T];L^1(\Omega))}\right) \|\vec{v}\|_{L^2([0,T];L^2(\Omega,\phi^{-1}))}\|\vec{u}\|_{L^2([0,T];L^2(\Omega,\phi^{-1}))},
\end{align}
where in the last line we have used H{\"o}lder's inequality. Therefore we find that $B_\phi(\cdot,\cdot)$ is bounded in $L^2([0,T];L^2(\Omega,\phi^{-1}))$. Finally we show boundedness of the right hand side. By direct calculation we see that
\begin{align}
&| l_n(\phi^{-1}\vec{v})| = \Big|\int_0^T\mathrm{d}t\,  \int  \mathrm{d}\vec{r}\, \vec{v}(\vec{r},t)\cdot\vec{f}(\varrho_n,\varrho_{n-1})\Big| \leq \int_0^T\mathrm{d}t\,  \int  \mathrm{d}\vec{r}\, \Big|\vec{v}(\vec{r},t)\cdot\vec{f}(\varrho_n,\varrho_{n-1})\Big|\\
& = \int_0^T\mathrm{d}t\, \int  \mathrm{d}\vec{r}\, \phi^{-1/2}\phi^{1/2}\Big|\vec{v}(\vec{r},t)\cdot\vec{f}(\varrho_n,\varrho_{n-1})\Big| \leq  \int_0^T\mathrm{d}t\, \int  \mathrm{d}\vec{r}\, \|\vec{v}\|_{L^2(\Omega,\phi^{-1})}\|\phi^{1/2}\vec{f}(\varrho_n,\varrho_{n-1})\|_{L^2(\Omega)}\\
& \leq  \|\vec{v}\|_{L^2([0,T];L^2(\Omega,\phi^{-1}))}\|\phi^{1/2}\vec{f}(\varrho_n,\varrho_{n-1})\|_{L^2([0,T];L^2(\Omega))}.
\end{align}
Thus by the Lax-Milgram Theorem \ref{thm:lax-milgram} there exists a unique $\vec{a}_n(\vec{r},t)\in {L^2([0,T];L^2(\Omega,\phi^{-1}))}$ solving \eqref{eq:weak_formulation_pair_frozen_a} for each $n\in \mathbb{N}$.
\end{proof}

We now prove the existence and uniqueness of a weak density $\varrho(\vec{r},t)$ solving \eqref{eq:weak_formulation_pair_rho}.

\begin{lemma}[Existence and Uniqueness of Weak Density]\label{thm:existence_and_uniqueness_weak_rho}

Let $\varrho_0 \in C^\infty(\Omega)$, $\varrho\geq 0$ and $\int  \mathrm{d}\vec{r} \varrho_0(\vec{r})=1$. Then there exists a unique weak solution $\varrho\in L^\infty([0,T];L^2(\Omega))\cap L^2([0,T]; H^1(\Omega))$, with
$\partial_t\varrho\in L^2([0,T]; H^{-1}(\Omega))$, to equation \eqref{eq:ddft_eq_dyn_rho} in the sense \eqref{eq:weak_formulation_pair_rho} with the estimate \eqref{eq:total_energy_bound}.
\end{lemma}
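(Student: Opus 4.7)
The plan is to realize $\varrho$ as the limit of the iterates $\varrho_n$ constructed via \eqref{eq:rho_it_eqn}, along the lines of \cite{chazelle2017well} but adapted to accommodate the nonlocal diffusion tensor $\bm{D}_{\varrho_{n-1}}$, the confining and two-body drifts, and the hydrodynamic coupling through $\vec{a}_{n-1}$. Taking $\phi = \varrho_{n-1}$ in the frozen scheme, equation \eqref{eq:rho_it_eqn} is linear parabolic in $\varrho_n$. Under Assumptions \eqref{ass:D_pos_def_weak_diffable}, \eqref{ass:Z2_uniformly_bd}, \eqref{ass:V1_V2_in_W_1_inf}, and with $\vec{a}_{n-1}$ supplied by Lemma \ref{lem:lax_milgram_n}, the coefficients are uniformly bounded and measurable, so classical Galerkin theory (cf.\ Appendix \ref{thm:classical_PDE_existence}) produces for each $n$ a unique $\varrho_n \in L^\infty([0,T];L^2(\Omega))\cap L^2([0,T];H^1(\Omega))$ with $\partial_t\varrho_n\in L^2([0,T];H^{-1}(\Omega))$, and mass is preserved via the no-flux boundary condition.

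Next I would derive uniform-in-$n$ energy bounds. Testing \eqref{eq:weak_formulation_pair_frozen_rho} with $\eta = \varrho_n$ and using uniform positive-definiteness of $\bm{D}_{\varrho_{n-1}}$, the $L^\infty$ bounds on $\nabla V_1$, $\nabla V_2$, $\bm{Z}_2$, mass conservation $\|\varrho_{n-1}\|_{L^1}=1$, and the a priori $L^2$ bound on $\vec{a}_{n-1}$ from Lemma \ref{lem:lax_milgram_n}, one absorbs the $\|\nabla\varrho_n\|_{L^2}^2$ contributions from the drift via Young's inequality and applies Gronwall to obtain
\begin{equation*}
\|\varrho_n\|_{L^\infty([0,T];L^2(\Omega))}^2 + \|\varrho_n\|_{L^2([0,T];H^1(\Omega))}^2 \leq C(T)\|\varrho_0\|_{L^2(\Omega)}^2
\end{equation*}
uniformly in $n$. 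A duality argument using the equation itself then yields a uniform bound on $\|\partial_t\varrho_n\|_{L^2([0,T];H^{-1}(\Omega))}$.

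With these estimates in hand, Aubin–Lions–Simon compactness supplies a subsequence (still denoted $\varrho_n$) with $\varrho_n \to \varrho$ strongly in $L^2([0,T];L^2(\Omega))$, weakly in $L^2([0,T]; H^1(\Omega))$, and $\partial_t\varrho_n \rightharpoonup \partial_t\varrho$ in $L^2([0,T];H^{-1}(\Omega))$. To pass to the limit in \eqref{eq:weak_formulation_pair_frozen_rho}, I would combine strong $L^2$ convergence of $\varrho_{n-1}$ with the resolvent identity
\begin{equation*}
\bm{D}_{\varrho_{n-1}} - \bm{D}_{\varrho} = \bm{D}_{\varrho_{n-1}}\bigl(\bm{Z}_1 \star(\varrho - \varrho_{n-1})\bigr)\bm{D}_{\varrho},
\end{equation*}
together with $\|\bm{Z}_1\|_{L^\infty}<\infty$, to deduce that $\bm{D}_{\varrho_{n-1}}\to\bm{D}_\varrho$ in $L^\infty_{t,\vec{r}}$ along a further subsequence. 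The convolutions $\nabla V_2\star\varrho_{n-1}$ and $\bm{Z}_2\star\vec{a}_{n-1}$ converge in $L^2$ by Young's convolution inequality applied to the $L^\infty$ kernels (with the flux controlled uniformly by Lemma \ref{lem:lax_milgram_n}). Each term in \eqref{eq:weak_formulation_pair_frozen_rho} then passes to the limit, yielding \eqref{eq:weak_formulation_pair_rho}, and the stated energy estimate follows from weak lower semicontinuity of the norms.

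For uniqueness, I would subtract the weak equations for two candidate solutions $\varrho,\tilde\varrho$ with the same initial datum, test with $w = \varrho - \tilde\varrho$, and again invoke the resolvent identity together with $L^\infty$ bounds on $\nabla V_2$, $\bm{Z}_1$, $\bm{Z}_2$ to reduce the resulting energy identity to a Gronwall-type differential inequality of the form $\tfrac{d}{dt}\|w\|_{L^2}^2 + c\|\nabla w\|_{L^2}^2 \leq C\|w\|_{L^2}^2$, after which standard arguments force $w \equiv 0$. The principal technical obstacle is the nonlinear dependence of $\bm{D}_\varrho$ on $\varrho$ through convolution with $\bm{Z}_1$ followed by matrix inversion: controlling this Lipschitz-type dependence both in the limit passage and in the uniqueness estimate is precisely the role of the resolvent identity. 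A secondary difficulty is the coupling to the flux sequence, and it is here that the smallness hypothesis $\|\mu_{\max}\|_{L^\infty}\|\bm{Z}_2\|_{L^\infty}<1$ becomes essential, as it prevents the nonlocal operator $\mathcal{H}_{\varrho_{n-1}}$ from destabilising the iteration.
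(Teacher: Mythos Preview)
Your proposal is correct and follows the same overall architecture as the paper: linearise via the frozen scheme, obtain uniform-in-$n$ energy bounds, extract a limit by compactness, pass to the limit in the weak formulation, and close uniqueness with a Gr{\"o}nwall argument. Two implementation choices differ. First, for compactness you invoke Aubin--Lions--Simon to obtain strong $L^2_{t,\vec{r}}$ convergence of a subsequence, whereas the paper shows directly that $\{\varrho_n\}$ is Cauchy in $L^1([0,T];L^1(\Omega))$ (Lemma \ref{lem:rhon_is_cauchy}) and combines this strong $L^1$ convergence of the \emph{full} sequence with Banach--Alaoglu for the weak $H^1$ and $H^{-1}$ limits; the paper's route avoids passing to subsequences in the nonlinear terms, while yours is the more standard PDE manoeuvre. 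Second, you are explicit about controlling the nonlinear dependence of $\bm{D}_\varrho$ on $\varrho$ via the resolvent identity $\bm{D}_{\varrho_{n-1}}-\bm{D}_\varrho=\bm{D}_{\varrho_{n-1}}\bigl(\bm{Z}_1\star(\varrho-\varrho_{n-1})\bigr)\bm{D}_\varrho$, both in the limit passage and in uniqueness; the paper's written proof is less explicit on this point (it writes $\bm{D}$ without subscript in the uniqueness calculation), so your treatment is in fact more careful here. A minor indexing discrepancy: the paper freezes the diffusion tensor at $\varrho_{n-2}$ rather than $\varrho_{n-1}$, which is cosmetic.
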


\begin{proof}

We establish sufficient boundedness of the sequence $\{\varrho_n\}{n\geq 1}$ by Propositions \ref{prop:bound_rhon}, \ref{prop:un_H1_rho0_bound}, and observe that by Lemma \ref{lem:rhon_is_cauchy} $\{\varrho_n\}{\geq 1}$ is Cauchy $L^1([0,T];L^1(\Omega))$ hence the existence of a weakly convergent subsequence $\{\varrho_{n_k}\}{k\geq 1}$ is guaranteed by the Banach--Alaoglu Theorem. The remainder of the argument is to show $\lim_{n\to\infty} \varrho_n$ exists and is a limit point solving the weak problem \eqref{eq:weak_formulation_pair_rho}. To ease notation we write $\vec{v}_{n_k}(\vec{r},t) :=  \nabla V_1(\vec{r},t) + \bm{A}(\vec{r},[\vec{a}_{n_k-1}],t)$. We multiply \eqref{eq:well_posed_un_ibvp} by $\eta\in L^2([0,T]; H^1(\Omega))$ after setting $n=n_k\in \mathbb{N}$ and integrate over $\Omega\times [0,T]$ to obtain
\begin{align}
\int_0^T\mathrm{d}t\, \langle \partial_t\varrho_{n_k},\, \eta(t) \rangle + \int_0^T\mathrm{d}t\,\int \mathrm{d}\vec{r}\, \nabla \eta \cdot\bm{D}\,(\nabla \varrho_{n_k}+	\varrho_{n_k} (\vec{v}_{n_k}+ \,\nabla V_2\star \varrho_{n_k-1}))=0.
\end{align}
For the transport term we write
\begin{align}
&\int_0^T\mathrm{d}t\, \nabla \eta \cdot \varrho_{n_k}\bm{D}\, (\vec{v}_{n_k}+ \,\nabla V_2\star \varrho_{n_k-1})\nonumber\\
& = \int_0^T\mathrm{d}t\, \nabla \eta \cdot\,(\varrho_{n_k}-\varrho)\bm{D}\, (\vec{v}_{n_k}+ \,\nabla V_2\star \varrho_{n_k-1})\\
&\quad +\int_0^T\mathrm{d}t\, \nabla \eta \cdot \,\varrho \bm{D}\, (\vec{v}_{n_k}+ \,\nabla V_2\star (\varrho_{n_k-1}-\varrho)) +\int_0^T\mathrm{d}t\, \nabla \eta\cdot \,\varrho \bm{D}\,  \,\nabla V_2\star \varrho.
\end{align}
Note that $\varrho_{n_k}\rightharpoonup\varrho$ in $L^2([0,T];H^1(\Omega))\subset L^2([0,T];L^2(\Omega))$ and $(\nabla \cdot\bm{D})\cdot (\vec{v}_{n_k}(\vec{r},t)+ \,\nabla V_2\star (\varrho_{n_k-1}))$ is uniformly bounded and so
\begin{align}
\int_0^T\mathrm{d}t\, \int \mathrm{d}\vec{r}\,\nabla ^\top\eta \, (\varrho_{n_k}-\varrho)\bm{D}\, (\vec{v}_{n_k}+ \,\nabla V_2\star \varrho_{n_k-1})\to 0
\end{align}
as $k\to \infty$. 

Now by H{\"o}lder's inequality one has
\begin{align}
&\int_0^T\mathrm{d}t\, \nabla \eta\cdot \,\varrho\,\bm{D}\,\nabla  (V_2\star (\varrho_{n_k-1}-\varrho))\\
&\quad \leq \mu_{\max}\|\nabla \eta \|_{L^2([0,T]; L^2(\Omega))}\|\nabla V_2\|_{L^{\infty}(\Omega)} \left( \int_0^T\mathrm{d}t\,\|\varrho_{n_k-1}(t)-\varrho(t)\|_{L^1(\Omega)}^2\right)^{1/2}\to 0.
\end{align}
Now note that by Lemma \ref{lem:rhon_is_cauchy}, $\|\phi_n\|_{L^1(\Omega)}$ is bounded and therefore
\begin{align}
\int_0^T\mathrm{d}t \|\varrho_{n_k-1}(t)-\varrho(t)\|_{L^1(\Omega)}^2\leq C\int_0^T\mathrm{d}t \|\varrho_{n_k-1}(t)-\varrho(t)\|_{L^1([0,T];L^1(\Omega))}
\to 0.
\end{align}

Therefore we have
\begin{align}
\int_0^T\mathrm{d}t\, \nabla \eta\, \cdot \,\varrho_{n_k}\bm{D}\, (\vec{v}_{n_k}+ \,\nabla V_2\star \varrho_{n_k-1})
\to\int_0^T\mathrm{d}t\, \nabla \eta \cdot\varrho  \bm{D}\, (\vec{v}_{n_k}+ \,\nabla V_2\star \varrho)
\end{align}
as $k\to \infty$.
By the weak convergence results of Lemma \ref{lem:weak_conv_results} we have
\begin{align}
\int_0^T\mathrm{d}t\, \langle \partial_t \varrho_{n_k},\, \varrho_{n_k} \rangle &\to \int_0^T\mathrm{d}t\, \langle \partial_t\varrho,\, \varrho \rangle,\nonumber\\
\int_0^T\mathrm{d}t\,\int \mathrm{d}\vec{r}\, \nabla \eta \cdot\bm{D}\,\nabla \varrho_{n_k} &\to \int_0^T\mathrm{d}t\,\int \mathrm{d}\vec{r}\, \nabla \eta \cdot\bm{D}\,\nabla \varrho
\end{align}
as $k\to \infty$. This establishes existence of weak solution to \eqref{eq:ddft_dyn_non_gradient_flow} in the sense \eqref{eq:weak_formulation_pair_rho}. Establishing $\varrho(0)=\varrho_0$ is a routine argument (see \cite{evans2002partial}). 

To prove uniqueness we set $\xi = \varrho_1-\varrho_2$ where $\varrho_1,\varrho_2$ are weak solutions then we have
\begin{align}
\int_0^T\mathrm{d}t\, \langle \partial_t \xi(t), \, \eta(t)  \rangle+\int_0^T\mathrm{d}t\, \int  \mathrm{d}\vec{r}\, \nabla \eta\cdot \bm{D}\,(\nabla \xi
 +\xi\,\vec{v}_{n_k}+ \varrho_1\nabla \,V_2\star\varrho_1- \varrho_1\nabla \,V_2\star\varrho_2)=0
\end{align}
Adding and subtracting $\int_0^T\mathrm{d}t\,\int \mathrm{d}\vec{r}'\,\nabla \eta\cdot  \varrho_2\nabla V_2\star \varrho_1$ we find
\begin{align}
&\int_0^T\mathrm{d}t\, \langle \partial_t \xi(t), \, \eta(t)  \rangle + \int_0^T\mathrm{d}t\, \int  \mathrm{d}\vec{r}\, \nabla \eta\cdot \bm{D}\,\nabla \xi =-\int_0^T\mathrm{d}t\, \int  \mathrm{d}\vec{r}\, \nabla \eta\cdot \bm{D}\,(\xi\,\vec{v}_{n_k}+ \xi\nabla \,V_2\star\varrho_1- \varrho_2\nabla \,V_2\star\xi)\nonumber\\
&\quad\leq \int_0^T\mathrm{d}t\, \int  \mathrm{d}\vec{r}\, |\nabla \eta\cdot \bm{D}^{1/2} \bm{D}^{1/2}\,(\xi\,\vec{v}_{n_k}+ \xi\nabla \,V_2\star\varrho_1- \varrho_2\nabla \,V_2\star\xi)|.\label{eq:uniqueness_bound_1}
\end{align}
By Young's inequality we have
\begin{align}
&\int_0^T\mathrm{d}t\, \int  \mathrm{d}\vec{r}\, |\nabla \eta\cdot \bm{D}^{1/2} \bm{D}^{1/2}\,(\xi\,\vec{v}_{n_k}+ \xi\nabla \,V_2\star\varrho_1- \varrho_2\nabla \,V_2\star\xi)|\nonumber\\
&\quad\leq \int_0^T\mathrm{d}t\, \int  \mathrm{d}\vec{r}\, |\bm{D}^{1/2} \nabla \eta |^2 +\tfrac{1}{4}\int_0^T\mathrm{d}t\, \int  \mathrm{d}\vec{r}\, | \bm{D}^{1/2} (\xi\,\vec{v}_{n_k}+ \xi\nabla \,V_2\star\varrho_1- \varrho_2\nabla \,V_2\star\xi ) |^2.
\end{align}
Using the triangle inequality and Young's inequality we expand the absolute value inside the integral
\begin{align}
&\tfrac{1}{4}\int_0^T\mathrm{d}t\, \int  \mathrm{d}\vec{r}\, |\bm{D}^{1/2} ( \xi\,\vec{v}_{n_k}+ \xi\nabla \,V_2\star\varrho_1- \varrho_2\nabla \,V_2\star\xi )|^2\nonumber\\
&\quad \leq \tfrac{1}{4}\int_0^T\mathrm{d}t\, \int  \mathrm{d}\vec{r}\, |\bm{D}^{1/2} \xi\,\vec{v}_{n_k}|^2
+ |\bm{D}^{1/2} ( \xi\nabla \,V_2\star\varrho_1-\varrho_2\nabla \,V_2\star\xi ) |^2 \nonumber\\
&\quad\leq \tfrac{1}{4}\int_0^T\mathrm{d}t\, \int  \mathrm{d}\vec{r} \left(  |\bm{D}^{1/2}\xi\,\vec{v}_{n_k}|^2
+2 |\bm{D}^{1/2}\xi\nabla \,V_2\star\varrho_1|^2
+2 |\bm{D}^{1/2}\varrho_2\nabla \,V_2\star\xi |^2\right)\nonumber\\
&\quad\leq \tfrac{\mu_{\max}}{4}\int_0^T\mathrm{d}t\, \int  \mathrm{d}\vec{r}\,  |\xi\,\vec{v}_{n_k}|^2+2 |\xi\nabla \,V_2\star\varrho_1|^2+2 |\varrho_2\nabla \,V_2\star\xi |^2. \label{eq:uniqueness_bound_2}
\end{align}
Estimating each of these terms, first
\begin{align}\label{eq:uniqueness_bound_3}
\int_0^T\mathrm{d}t\, \int  \mathrm{d}\vec{r}\,  |\xi\,\vec{v}_{n_k}|^2\leq\|\vec{v}_{n_k}\|_{L^\infty(\Omega)}^2\|\xi\|_{L^2([0,T];L^2(\Omega))}.
\end{align}
Second,
\begin{align}\label{eq:uniqueness_bound_4}
2 \int_0^T\mathrm{d}t\, \int  \mathrm{d}\vec{r}\,  |\xi\nabla \,V_2\star\varrho_1|^2 \leq 2 |\Omega\||\nabla V_2\|_{L^\infty(\Omega)}^2\|\xi\|_{L^2([0,T];L^2(\Omega))},
\end{align}
and third
\begin{align}
2 \int_0^T\mathrm{d}t\, \int  \mathrm{d}\vec{r}\,|\varrho_2\nabla \,V_2\star\xi|^2\leq 2 |\Omega| \|\varrho_2\|_{L^\infty([0,T];L^2(\Omega))} \|\nabla V_2\|_{L^\infty(\Omega)}^2\|\xi\|_{L^2([0,T];L^2(\Omega))}.\label{eq:uniqueness_bound_5}
\end{align}

Combining \eqref{eq:uniqueness_bound_1}, \eqref{eq:uniqueness_bound_2}, \eqref{eq:uniqueness_bound_3}, \eqref{eq:uniqueness_bound_4}, \eqref{eq:uniqueness_bound_5} we obtain, after setting $\eta = \xi$, and using boundedness of $\varrho_2$ in terms of its initial data
\begin{align}
\int_0^T\mathrm{d}t\, \langle \partial_t \xi(t),\, \xi(t)\rangle  \leq (C_1(T)+C_2(T)\|\varrho_0\|_{L^2(\Omega)}^2)\|\xi \|_{L^2([0,T];L^2(\Omega))}^2
\end{align}
for some constants $C_1(T)$, $C_2(T)$ dependent on $\Omega$. This holds for all $T$ so it must be the case that
\begin{align}
\der[]{t}\|\xi(t)\|_{L^2(\Omega)}^2\leq (C_1(T)+C_2(T)\|\varrho_0\|_{L^2(\Omega)}^2)\|\xi(t)\|_{L^2(\Omega)}^2
\end{align}
implying by Gr{\"o}nwall's lemma that
\begin{align}
\|\xi(t)\|_{L^2(\Omega)} \leq (C_1(T)+C_2(T)\|\varrho_0\|_{L^2(\Omega)}^2)\|\xi(0)\|_{L^2(\Omega)}
\end{align}
a.e.\ $t\in [0,T]$. However, $\xi(0)\equiv 0$ hence $\|\varrho_1(t)-\varrho_2(t)\|_L^2(\Omega)=0$ for all $t\in [0,T]$.
\end{proof}

We now prove the existence and uniqueness of a weak flux $\vec{a}(\vec{r},t)$ solving \eqref{eq:weak_formulation_pair_a}.

\begin{lemma}[Existence and Uniqueness of Weak Flux]\label{thm:existence_and_uniqueness_weak_a}
Let $\varrho_0 \in C^\infty(\Omega)$, $\varrho\geq 0$ and $\int  \mathrm{d}\vec{r} \varrho_0(\vec{r})=1$. Then there exists a unique weak solution $\vec{a}\in L^\infty([0,T];L^2(\Omega))$ to equation \eqref{eq:ddft_eq_dyn_a} in the sense \eqref{eq:weak_formulation_pair_a}.
\end{lemma}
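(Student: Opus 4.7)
The plan is to apply the Lax--Milgram theorem directly to the weak formulation \eqref{eq:weak_formulation_pair_a}, with the weak density $\varrho$ from Lemma \ref{thm:existence_and_uniqueness_weak_rho} now fixed and taking the role of the frozen input $\phi$ of Lemma \ref{lem:lax_milgram_n}. Once $\varrho$ is specified, equation \eqref{eq:weak_formulation_pair_a} is genuinely linear in $\vec{a}$, so the task essentially reduces to re-running the Lax--Milgram argument of Lemma \ref{lem:lax_milgram_n} on the Hilbert space $\mathcal{V}:=L^2([0,T];L^2(\Omega,\varrho^{-1}))$ with the choice $\phi=\varrho$.

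First I would verify that $\varrho$ satisfies the prerequisites to serve as $\phi$. Mass conservation and the energy estimate of Lemma \ref{thm:existence_and_uniqueness_weak_rho} give $\|\varrho\|_{L^\infty([0,T];L^1(\Omega))}=1$ together with $\varrho\in L^\infty([0,T];L^2(\Omega))\cap L^2([0,T];H^1(\Omega))$; the spectral bound $\|\mu_{\max}^\varrho\|_{L^\infty([0,T])}\|\bm{Z}_2\|_{L^\infty(\Omega)}<1$ is the standing hypothesis of Theorem \ref{thm:exis_uniq_rho_and_a}; and strict positivity $\varrho>0$ on $(0,T]\times\Omega$, needed so that the weight $\varrho^{-1}$ is a.e.\ finite, follows from Harnack's inequality as noted after Assumption \eqref{ass:rho_in_L2_P_ac}.

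With these preparations the coercivity and continuity computations of Lemma \ref{lem:lax_milgram_n} transcribe verbatim. Coercivity gives
\[
B_\varrho(\vec{v},\vec{v})\geq\|\mu_{\max}^\varrho\|_{L^\infty([0,T])}^{-1}\bigl(1-\|\bm{Z}_2\|_{L^\infty(\Omega)}\|\mu_{\max}^\varrho\|_{L^\infty([0,T])}\bigr)\|\vec{v}\|_{\mathcal{V}}^2,
\]
with strictly positive constant, while continuity of $B_\varrho$ on $\mathcal{V}\times\mathcal{V}$ follows from $\|\varrho\|_{L^\infty([0,T];L^1(\Omega))}<\infty$ and $\|\bm{Z}_2\|_{L^\infty(\Omega)}<\infty$. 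Continuity of the linear functional $l(\vec{v})=\int_0^T\!\int\vec{v}\cdot\vec{f}_\varrho$ on $\mathcal{V}$ reduces to $\|\varrho^{1/2}\vec{f}_\varrho\|_{L^2([0,T];L^2(\Omega))}<\infty$, which follows from $\vec{f}_\varrho=\nabla\varrho+\varrho(\nabla V_1+\nabla V_2\star\varrho)$, the regularity of $\varrho$ just quoted, and $V_1,V_2\in W^{2,\infty}(\Omega)$ from Assumption \eqref{ass:V1_V2_in_W_1_inf}. Lax--Milgram then delivers a unique $\vec{a}\in\mathcal{V}$, uniqueness being automatic from coercivity.

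The principal obstacle is twofold: handling the singular weight $\varrho^{-1}$ where $\varrho$ is small (in particular near $t=0$), and upgrading the regularity from $\mathcal{V}$ to the $L^\infty([0,T];L^2(\Omega))$ claimed in the statement. For the first I would apply the Lax--Milgram argument on subintervals $[\delta,T]$ for arbitrary $\delta>0$, where Harnack supplies a uniform lower bound $\varrho\geq c_\delta>0$, and then pass $\delta\to 0$ via a diagonal extraction, with compatibility at $t=0$ supplied by the prescribed initial data $\vec{a}_0$. The second is handled pointwise in $t$ by inverting $\mathcal{H}_\varrho\vec{a}=\vec{f}_\varrho$ algebraically in $L^2(\Omega)$, using the invertibility of $\mathcal{H}_\varrho$ established in Proposition \ref{thm:cond_converg_fred_det} together with the uniform $L^\infty$ bounds on $\bm{D}_\varrho^{-1}$ and $\bm{Z}_2$ and the control $\varrho\in L^\infty([0,T];L^2(\Omega))$.
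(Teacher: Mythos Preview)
Your approach is correct but genuinely different from the paper's. The paper does \emph{not} apply Lax--Milgram directly with $\phi=\varrho$; instead it stays with the iterates $\vec{a}_n$ produced by Lemma~\ref{lem:lax_milgram_n}, bounds them in $L^1([0,T];L^1(\Omega))$ via the weighted $L^2$ estimate, extracts a weakly convergent subsequence, and then passes to the limit in \eqref{eq:weak_formulation_pair_frozen_a} by using self-adjointness of $\mathcal{H}_\phi$ to shift the operator onto the test function. Uniqueness is then argued separately via positive definiteness of $\mathcal{H}_\varrho$ in $L^2(\Omega,\varrho^{-1})$, essentially the same coercivity you use.

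Your route is more economical: once $\varrho$ is fixed the problem is linear, and a single invocation of Lax--Milgram on $L^2([0,T];L^2(\Omega,\varrho^{-1}))$ gives existence and uniqueness in one stroke, bypassing the compactness argument entirely. This is a genuine simplification, and in fact sidesteps a delicate point in the paper's argument (the paper invokes Banach--Alaoglu in $L^1$, which is not reflexive). What the paper's iterative route buys is that the frozen densities $\varrho_{n-2}$ are smooth and strictly positive by construction, so the weighted space is automatically well defined at every stage; your route must instead appeal to a Harnack inequality for the weak limit $\varrho$ to control the weight $\varrho^{-1}$, which is the obstacle you correctly identify. Your proposed remedy (work on $[\delta,T]$ and send $\delta\to 0$) is sound, though note that the Harnack discussion in Section~\ref{subsec:strict_pos_rho} is phrased for classical solutions, so you would need to invoke a weak parabolic Harnack inequality (e.g.\ Moser's) rather than the pointwise version. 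The regularity upgrade to $L^\infty([0,T];L^2(\Omega))$ via pointwise inversion of $\mathcal{H}_\varrho$ is reasonable and, in fact, is not explicitly carried out in the paper's proof either.
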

\begin{proof}
By Lemma \ref{lem:lax_milgram_n} for each $n\in\mathbb{N}$ there exists a unique $\vec{a}_n(\vec{r},t)\in L^2([0,T];L^2(\Omega,\phi^{-1}))$. We now consider the following $L^1([0,T];L^1(\Omega))$ norm
\begin{align}
\|\vec{a}_n\|_{L^1([0,T];L^1(\Omega))} &= \int_0^T\mathrm{d}t\, \int\mathrm{d}\vec{r}\, |\vec{a}_n(\vec{r},t)| = \int_0^T\mathrm{d}t\, \int\mathrm{d}\vec{r}\, \phi^{-1/2}\phi^{1/2}|\vec{a}_n(\vec{r},t)|\\
& \leq \|\phi\|_{L^1([0,T];L^1(\Omega))}\|\vec{a}_n\|_{L^2([0,T];L^2(\Omega,\phi^{-1}))}.
\end{align}
We now let $\phi = \varrho_{n-2}$ and note that by Lemma \ref{lem:rhon_is_cauchy} $\varrho_{n-2}\in L^1([0,T];L^1(\Omega))$. Additionally by Lemma \ref{lem:lax_milgram_n} $\vec{a}_n\in L^2([0,T];L^2(\Omega,\phi^{-1}))$ hence we see that $\vec{a}_n\in L^1([0,T];L^1(\Omega))$ for every $n\in\mathbb{N}$. Now since $L^1([0,T];L^1(\Omega))$ is reflexive, by Banach--Alaoglu there exists a weakly converging subsequence $\{\vec{a}_{n_k}\}_{k\geq 1}$ such that $\vec{a}_{n_k}\rightharpoonup \vec{a}$ (weakly in $L^1([0,T];L^1(\Omega))$) as $k\to \infty$ to some $\vec{a}\in L^1([0,T];L^1(\Omega))$. Now consider the left hand side of \eqref{eq:weak_formulation_pair_frozen_a}. Let $\vec{v}(\vec{r},t)\in L^2([0,T];L^2(\Omega))$ we have that 
\begin{align}
\int_0^T\mathrm{d}t\,\langle \vec{v}, \mathcal{H}_{\phi}\vec{a}_n\rangle_{L^2(\Omega))} &= \int_0^T\mathrm{d}t\,\langle \varrho_{n-2}\vec{v}, \mathcal{H}_{\varrho_{n-2}}\vec{a}_n\rangle_{L^2(\Omega,\varrho_{n-2}^{-1}))}\\
& =  \int_0^T\mathrm{d}t\,\langle \mathcal{H}_{\varrho_{n-2}}^{\ast}\varrho_{n-2}\vec{v}, \vec{a}_n\rangle_{L^2(\Omega,\varrho_{n-2}^{-1}))}\\
&=\int_0^T\mathrm{d}t\,\langle \mathcal{H}_{\varrho_{n-2}}\varrho_{n-2}\vec{v}, \vec{a}_n\rangle_{L^2(\Omega))}\\
& = \int_0^T\mathrm{d}t\,\int\mathrm{d}\vec{r}\,\left(\bm{D}_{\varrho_{n-2}}\vec{v}(\vec{r},t) + \bm{Z}_2\star\left(\varrho_{n-2}\vec{v}\right)\right)\vec{a}_n(\vec{r},t),
\end{align}
where we have used the self adjoint property of $\mathcal{H}_\phi$. All that remains is to show that $\left(\bm{D}_{\varrho_{n-2}}\vec{v}(\vec{r},t) + \bm{Z}_2\star\left(\varrho_{n-2}\vec{v}\right)\right)\in L^1([0,T];L^1(\Omega))$. In particular we have that
\begin{align}
&\| \bm{D}_{\varrho_{n-2}}\vec{v}(\vec{r},t) + \bm{Z}_2\star\left(\varrho_{n-2}\vec{v}\right)\|_{L^1([0,T];L^1(\Omega))} =  \int_0^T\mathrm{d}t\,\int\mathrm{d}\vec{r}\,\big| \bm{D}_{\varrho_{n-2}}\vec{v}(\vec{r},t) + \bm{Z}_2\star\left(\varrho_{n-2}\vec{v}\right)\big|\\
&\leq \int_0^T\mathrm{d}t\,\int\mathrm{d}\vec{r}\,\big| \bm{D}_{\varrho_{n-2}}\vec{v}(\vec{r},t) \big|+ \big|\bm{Z}_2\star\left(\varrho_{n-2}\vec{v}\right)\big|\\
&\leq \|\bm{D}_{n-2}\|_{L^2([0,T];L^2(\Omega))}\|\vec{v}\|_{L^2([0,T];L^2(\Omega))}\\
&\quad + |\Omega|\|\bm{Z}_2\|_{L^\infty}\|\varrho_{n-2}\|_{L^2([0,T];L^2(\Omega))}\|\vec{v}\|_{{L^2([0,T];L^2(\Omega))}}<\infty.
\end{align}
We now set $n_{k} = n$ and pass to the limit $k\to\infty$
\begin{align}
&\lim_{k\to\infty}\int_0^T\mathrm{d}t\,\langle \vec{v}, \mathcal{H}_{\varrho_{n_k-2}}\vec{a}_{n_k}\rangle_{L^2(\Omega))}\\
\quad &= \lim_{k\to\infty}\int_0^T\mathrm{d}t\,\int\mathrm{d}\vec{r}\,\left(\bm{D}_{\varrho_{n_k-2}}\vec{v}(\vec{r},t) + \bm{Z}_2\star\left(\varrho_{n_k-2}\vec{v}\right)\right)\vec{a}_{n_k}(\vec{r},t)\\
& = \int_0^T\mathrm{d}t\,\int\mathrm{d}\vec{r}\,\left(\bm{D}_{\varrho}\vec{v}(\vec{r},t) + \bm{Z}_2\star\left(\varrho\vec{v}\right)\right)\vec{a}(\vec{r},t) = \int_0^T\mathrm{d}t\,\langle \vec{v}, \mathcal{H}_{\varrho}\vec{a}\rangle_{L^2(\Omega))}
\end{align}
where we have used the weak convergence of $\vec{a}_{n_k}$ and weak convergence of $\varrho_{n_k}$ (see Corollary \ref{cor:summary_of_convergence}).  Now consider the right hand side of \eqref{eq:weak_formulation_pair_frozen_a}
\begin{align}
\lim_{k\to\infty}\int_0^T\mathrm{d}t\,  \int  \mathrm{d}\vec{r}\, \vec{v}(\vec{r},t)\cdot\vec{f}(\varrho_{n_k},\varrho_{n_k-1}) = \int_0^T\mathrm{d}t\,  \int  \mathrm{d}\vec{r}\, \vec{v}(\vec{r},t)\cdot\vec{f}(\vec{r},\varrho,t),
\end{align}
where we have used the weak convergence of $\varrho_{n_k}$ from Corollary \ref{cor:summary_of_convergence}.

To obtain uniqueness we suppose $\vec{a}(\vec{r},t)$ and $\vec{b}(\vec{r},t)\in L^2([0,T];L^2(\Omega))$ are both solutions to \eqref{eq:weak_formulation_pair_a} then, for every $\vec{v}(\vec{r},t)\in L^2([0,T];L^2(\Omega))$ we have that
\begin{align}
0 = \int_0^T\mathrm{d}t\,\langle \vec{v}, \mathcal{H}_{\varrho}(\vec{a}-\vec{b})\rangle_{L^2(\Omega))} = \int_0^T\mathrm{d}t\,\langle \varrho\vec{v}, \mathcal{H}_{\varrho}(\vec{a}-\vec{b})\rangle_{L^2(\Omega,\varrho^{-1}))}.
\end{align}
We now let $\vec{v} = (\vec{a}-\vec{b})/\varrho(\vec{r},t)$, where $\varrho(\vec{r},t)$ is the unique weak solution to \eqref{eq:weak_formulation_pair_rho}, to obtain 
\begin{align}
0 = \int_0^T\mathrm{d}t\,\langle (\vec{a}-\vec{b}), \mathcal{H}_{\varrho}(\vec{a}-\vec{b})\rangle_{L^2(\Omega,\varrho^{-1}))} &= \int_0^T\mathrm{d}t\,\langle (\vec{a}-\vec{b}), \mathcal{H}^{1/2}_{\varrho}\mathcal{H}^{1/2}_{\varrho}(\vec{a}-\vec{b})\rangle_{L^2(\Omega,\varrho^{-1}))} \\
&= \int_0^T\mathrm{d}t\,\|\mathcal{H}^{1/2}_{\varrho}(\vec{a}-\vec{b})\|_{L^2(\Omega,\varrho^{-1}))}^2,
\end{align}
and, by using the positive definiteness of $\mathcal{H}_{\varrho}$ in $ L^2([0,T];L^2(\Omega,\varrho^{-1}))$ (by equation \eqref{eq:H_rho_PD}), we obtain that $\vec{a}(\vec{r},t) = \vec{a}(\vec{r},t)$ a.e. $\vec{r}\in \Omega$ and $t\in [0,T]$. 
\end{proof}

By combining Lemma \ref{thm:existence_and_uniqueness_weak_rho} and Lemma \ref{thm:existence_and_uniqueness_weak_a} at once we obtain Theorem \ref{thm:exis_uniq_rho_and_a}. We conclude this section with a variational principle for the time dependent flux $\vec{a}(\vec{r},t)$. It turns out that \eqref{eq:weak_formulation_pair_a} can be restated as a minimisation problem. To be more precise, we define the quadratic functional $\mathcal{J}:L^2([0,T];L^2(\Omega))\to \mathbb{R}$ by
\begin{align}
\mathcal{J}[\vec{v}] := \int_0^T\mathrm{d}t\,  \int  \mathrm{d}\vec{r}\,\frac{1}{2}\varrho(\vec{r},t)^{-1}\vec{v}(\vec{r},t)\cdot\left(\mathcal{H}_{\varrho}\vec{v}\right)(\vec{r},t)  -\vec{v}(\vec{r},t)\cdot\nabla \frac{\delta\mathcal{F}}{\delta\varrho}[\varrho](\vec{r},t),
\end{align}
for $\vec{v}(\vec{r},t) \in :L^2([0,T];L^2(\Omega))$. For the following proposition we define the following bilinear and linear functionals
\begin{align}
B(\vec{v},\vec{u}) &:= \int_0^T\mathrm{d}t\,  \int  \mathrm{d}\vec{r}\,\vec{v}(\vec{r},t)\cdot \mathcal{H}_{\phi}\vec{u}(\vec{r},t),\\
l(\vec{v})&:= \int_0^T\mathrm{d}t\,  \int  \mathrm{d}\vec{r}\, \vec{v}(\vec{r},t)\cdot\varrho\nabla \frac{\delta\mathcal{F}}{\delta\varrho}[\varrho].
\end{align}

\begin{proposition}[$\vec{a}(\vec{r},t)$ is weak solution iff $\vec{a}(\vec{r},t)$ minimises $\mathcal{J}$.]
Let $B(\cdot,\cdot)$ be the bilinear functional on $L^2([0,T];L^2(\Omega))$ in \eqref{eq:weak_formulation_pair_a}. Then, $\vec{a}(\vec{r},t)\in L^2([0,T];L^2(\Omega))$ is the (unique) weak solution to \eqref{eq:weak_formulation_pair_a} if and only if $\vec{a}(\vec{r},t)$ is the unique minimiser of $\mathcal{J}[\cdot]$ over $L^2([0,T];L^2(\Omega))$.
\end{proposition}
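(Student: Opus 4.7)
The plan is to invoke the standard Dirichlet-principle correspondence: for a symmetric positive-definite bilinear form and a linear functional on a Hilbert space, the unique minimiser of the associated quadratic functional coincides with the unique solution of the variational equation. The natural pairing here is the weighted one on $L^2([0,T];L^2(\Omega,\varrho^{-1}))$, in which Theorem \ref{thm:eig_expan_gradient_flow_lyapunov} provides self-adjointness of $\mathcal{H}_\varrho$ and the estimate \eqref{eq:H_rho_PD} established in Lemma \ref{lem:lax_milgram_n} gives strict coercivity. Using also the elementary identity $\vec{f}_\varrho = \varrho\nabla \tfrac{\delta \mathcal{F}}{\delta \varrho}[\varrho]$ (immediate from \eqref{eq:body_force_f} and \eqref{eq:def_of_F}), the functional recasts in this pairing as $\mathcal{J}[\vec v] = \tfrac{1}{2}\langle \vec v, \mathcal{H}_\varrho \vec v\rangle_{L^2(\Omega,\varrho^{-1})} - \langle \vec v, \varrho\nabla \tfrac{\delta \mathcal{F}}{\delta \varrho}[\varrho]\rangle_{L^2(\Omega,\varrho^{-1})}$.

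The central computation is the completed-square identity
\begin{align*}
\mathcal{J}[\vec a + \vec u] - \mathcal{J}[\vec a] = \langle \vec u, \mathcal{H}_\varrho \vec a - \varrho\nabla \tfrac{\delta \mathcal{F}}{\delta \varrho}[\varrho]\rangle_{L^2(\Omega,\varrho^{-1})} + \tfrac{1}{2}\langle \vec u, \mathcal{H}_\varrho \vec u\rangle_{L^2(\Omega,\varrho^{-1})},
\end{align*}
valid for every $\vec u \in L^2([0,T];L^2(\Omega))$, where the cross term has been symmetrised using the $L^2(\Omega,\varrho^{-1})$-self-adjointness of $\mathcal{H}_\varrho$. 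The quadratic remainder is strictly positive whenever $\vec u \not\equiv 0$ by coercivity. The linear term, after the change of test function $\vec w := \varrho^{-1}\vec u$ (which is a bijection on the test space by strict positivity of $\varrho$, as noted after \eqref{ass:rho_in_L2_P_ac}), reads exactly $B(\vec w,\vec a) - l(\vec w)$, and therefore vanishes for every $\vec u$ if and only if the weak formulation \eqref{eq:weak_formulation_pair_a} holds.

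Both implications now follow at once from this identity. If $\vec a$ is a weak solution the linear part vanishes and $\mathcal{J}[\vec a + \vec u] > \mathcal{J}[\vec a]$ for $\vec u \neq 0$, so $\vec a$ is the unique minimiser; conversely a minimiser must have vanishing first variation in every admissible direction, which by the same linear term gives back \eqref{eq:weak_formulation_pair_a}. Uniqueness of the minimiser follows from strict convexity of $\mathcal{J}$, while uniqueness of the weak solution has already been established in Lemma \ref{thm:existence_and_uniqueness_weak_a}. The only genuine technical obstacle is bookkeeping between the unweighted pairing used in the definitions of $B$ and $l$ and the $\varrho^{-1}$-weighted pairing natural to $\mathcal{J}$: the symmetrisation of the cross term truly requires self-adjointness of $\mathcal{H}_\varrho$ in $L^2(\Omega,\varrho^{-1})$, and the identification of the linear residual with $B(\vec w,\vec a)-l(\vec w)$ requires the substitution $\vec w = \varrho^{-1}\vec u$ to preserve the test class, which in turn rests on the strict positivity of $\varrho$.
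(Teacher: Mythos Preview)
Your proof is correct and follows essentially the same route as the paper's: both hinge on the self-adjointness and coercivity of $\mathcal{H}_\varrho$ in $L^2([0,T];L^2(\Omega,\varrho^{-1}))$, and both require the substitution $\vec w = \varrho^{-1}\vec u$ together with strict positivity of $\varrho$ to pass between the weighted pairing and the unweighted weak formulation \eqref{eq:weak_formulation_pair_a}. The only difference is organisational: the paper treats the two implications separately (an algebraic reduction of $\mathcal{J}[\vec v]-\mathcal{J}[\vec a]$ to $\tfrac12 B(\varrho^{-1}(\vec v-\vec a),\vec v-\vec a)$ for the forward direction, and an explicit G\^ateaux-derivative computation plus a separate convexity check for the converse), whereas you package both into the single completed-square identity, which is a little cleaner.
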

\begin{proof}
We let $\vec{a}(\vec{r},t)\in L^2([0,T];L^2(\Omega))$ be the unique weak solution to \eqref{eq:weak_formulation_pair_a} and for every $\vec{v}(\vec{r},t)\in L^2([0,T];L^2(\Omega))$, consider $\mathcal{J}[\vec{v}]-\mathcal{J}[\vec{a}]$, 
\begin{align}
\mathcal{J}[\vec{v}]-\mathcal{J}[\vec{a}] &= \frac{1}{2}B(\varrho^{-1}\vec{v},\vec{v}) - l(\varrho^{-1}\vec{v})- \frac{1}{2}B(\varrho^{-1}\vec{a},\vec{a}) + l(\varrho^{-1}\vec{a})\\
& = \frac{1}{2}B(\varrho^{-1}\vec{v},\vec{v}) - \frac{1}{2}B(\varrho^{-1}\vec{a},\vec{a}) - l(\varrho^{-1}(\vec{v}-\vec{a}))\\
& =  \frac{1}{2}B(\varrho^{-1}\vec{v},\vec{v}) - \frac{1}{2}B(\varrho^{-1}\vec{a},\vec{a}) - B(\varrho^{-1}\vec{a},\vec{v}-\vec{a}))\\
& = \frac{1}{2}\left(B(\varrho^{-1}\vec{v},\vec{v}) -2 B(\varrho^{-1}\vec{a},\vec{v}) + B(\varrho^{-1}\vec{a},\vec{a})\right)\\
& = \frac{1}{2}\left(B(\varrho^{-1}\vec{v},\vec{v})-B(\varrho^{-1}\vec{a},\vec{v})-B(\varrho^{-1}\vec{v},\vec{a})+B(\varrho^{-1}\vec{a},\vec{a})\right)\\
& = \frac{1}{2}B(\varrho^{-1}(\vec{v}-\vec{a}),\vec{v}-\vec{a}),
\end{align}
where we have successively used the fact that $B(\cdot,\cdot) $ is symmetric in $L^2([0,T];L^2(\Omega,\varrho^{-1}))$. Therefore, we find $\mathcal{J}[\vec{v}]-\mathcal{J}[\vec{a}]  = 1/2B(\varrho^{-1}(\vec{v}-\vec{a}),\vec{v}-\vec{a})$, and, using the positive definiteness of $\mathcal{H}_\varrho$ in $L^2([0,T];L^2(\Omega,\varrho^{-1}))$, find 
\begin{align}
\mathcal{J}[\vec{v}]-\mathcal{J}[\vec{a}] \geq \delta \|\vec{v}-\vec{a}\|_{L^2([0,T];L^2(\Omega,\varrho^{-1}))}\label{eq:J_difference_bounded_below}
\end{align}
for some $\delta>0$ and hence
\begin{align}
\mathcal{J}[\vec{v}] \geq J[\vec{a}],\label{eq:a_minimises_J}
\end{align} 
i.e., $\vec{a}$ minimises $\mathcal{J}[\cdot]$ in $L^2([0,T];L^2(\Omega))$. In fact $\vec{a}$ is the unique minimiser of\\ $\mathcal{J}$ in $L^2([0,T];L^2(\Omega))$. In particular if $\vec{b}(\vec{r},t)\in L^2([0,T];L^2(\Omega))$ also minimises $\mathcal{J}$ then $J[\vec{v}] \geq J[\vec{b}]$ for every $\vec{v}(\vec{r},t)\in L^2([0,T];L^2(\Omega))$. Taking $\vec{v} = \vec{b}$ in \eqref{eq:a_minimises_J} we find that $\mathcal{J}[\vec{b}] =J[\vec{a}]$, then by virtue of \eqref{eq:J_difference_bounded_below} we obtain $\|\vec{b}-\vec{a}\|_{L^2([0,T];L^2(\Omega,\varrho^{-1}))} = 0$ and hence $\vec{b}(\vec{r},t)=\vec{a}(\vec{r},t)$ a.e. $\vec{r}\in \Omega$ and $t\in [0,T]$. 

We now suppose that $\vec{a}(\vec{r},t)\in L^2([0,T];L^2(\Omega))$ is a minimiser of $\mathcal{J}[\cdot]$. By direct calculation we see that $\mathcal{J}$ is convex. In particular let $\vec{v}(\vec{r},t),\vec{w}\in (\vec{r},t)\in L^2([0,T];L^2(\Omega))$ and let $\theta\in [0,1]$ then
\begin{align}
\mathcal{J}[(1-\theta)\vec{v}+\theta\vec{w}] = (1-\theta)\mathcal{J}[\vec{v}]+ \theta\mathcal{J}[\vec{w}] +\frac{1}{2}B(\varrho^{-1}(\vec{v}-\vec{w}),\vec{v}-\vec{w}).
\end{align}
Then using the positive definiteness property of $\mathcal{H}_\varrho$ in $ L^2([0,T];L^2(\Omega,\varrho^{-1}))$ we have that $B(\varrho^{-1}(\vec{v}-\vec{w}),\vec{v}-\vec{w})\geq 0$ and conclude that  $\mathcal{J}[(1-\theta)\vec{v}+\theta\vec{w}] \leq (1-\theta)\mathcal{J}[\vec{v}]+ \theta\mathcal{J}[\vec{w}]$. Moreover, if $\vec{a}$ minimises $\mathcal{J}[\cdot]$ then $\mathcal{J}[\cdot]$ has a stationary point at $\vec{a}$, such that
\begin{align}
\lim_{\epsilon\to 0}\frac{\mathcal{J}[\vec{a}+\epsilon\vec{v}]-\mathcal{J}[\vec{v}]}{\epsilon} = 0,
\end{align}
for every $\vec{v}\in L^2([0,T];L^2(\Omega))$. But since 
\begin{align}
\frac{\mathcal{J}[\vec{a}+\epsilon\vec{v}]-\mathcal{J}[\vec{v}]}{\epsilon} = B(\varrho^{-1}\vec{a},\vec{v})-l(\varrho^{-1}\vec{v}) +\frac{\epsilon}{2}B(\varrho^{-1}\vec{a},\vec{a}),
\end{align}
we deduce that, after taking $\epsilon\to 0$, 
\begin{align}
B(\varrho^{-1}\vec{a},\vec{v})-l(\varrho^{-1}\vec{v}) = 0,
\end{align}
for every $\vec{v}(\vec{r},t)\in L^2([0,T];L^2(\Omega))$ and where $\varrho(\vec{r},t)\in L^2([0,T];L^2(\Omega))$ is the unique weak solution to \eqref{eq:weak_formulation_pair_rho}. We then see that
\begin{align}
0 &= B(\varrho^{-1}\vec{a},\vec{v})-l(\varrho^{-1}\vec{v})\\
&= \int_0^T\,\mathrm{d}t\langle \vec{a},\mathcal{H}_\varrho\vec{v} \rangle_{L^2(\Omega,\varrho^{-1})}-\langle\varrho^{-1}\vec{v}, \varrho\nabla \frac{\delta\mathcal{F}}{\delta\varrho}[\varrho]\rangle_{L^2(\Omega)}\\
& = \int_0^T\mathrm{d}t\,\langle \mathcal{H}_\varrho^\ast\vec{a},\vec{v} \rangle_{L^2(\Omega,\varrho^{-1})}-\langle\varrho^{-1}\vec{v}, \varrho\nabla \frac{\delta\mathcal{F}}{\delta\varrho}[\varrho]\rangle_{L^2(\Omega)}\\
& = \int_0^T\mathrm{d}t\,\langle \mathcal{H}_\varrho\vec{a},\vec{v} \rangle_{L^2(\Omega,\varrho^{-1})}-\langle\varrho^{-1}\vec{v}, \varrho\nabla \frac{\delta\mathcal{F}}{\delta\varrho}[\varrho]\rangle_{L^2(\Omega)}\\
& = \int_0^T\mathrm{d}t\,\langle \vec{v},\mathcal{H}_\varrho\vec{a} \rangle_{L^2(\Omega,\varrho^{-1})}-\langle\vec{v}, \varrho\nabla \frac{\delta\mathcal{F}}{\delta\varrho}[\varrho]\rangle_{L^2(\Omega,\varrho^{-1})}.\label{eq:a_minimiser_is_weak_sol_in_L2_rho_inv}
\end{align}
Note that $\vec{v}(\vec{r},t)\in L^2([0,T];L^2(\Omega))$ was arbitrary hence we check that $\varrho^{-1}(\vec{r},t)\vec{v}(\vec{r},t)\in L^2([0,T];L^2(\Omega))$, where $\varrho(\vec{r},t)$ is the unique weak solution to \eqref{eq:weak_formulation_pair_rho}. Indeed, we have that
\begin{align}
\|\varrho^{-1}\vec{v}\|_{L^2([0,T];L^2(\Omega))}^2 \leq \delta^2 \|\vec{v}\|_{L^2([0,T];L^2(\Omega))}^2,
\end{align}
for some delta such that $\varrho(\vec{r},t)>0$ for every $\vec{r}\in \Omega$ and $t\in [0,\infty)$. Hence we may write $\varrho^{-1}(\vec{r},t)\vec{v}(\vec{r},t) = \vec{w}(\vec{r},t)$ for some $\vec{w}(\vec{r},t)\in L^2([0,T];L^2(\Omega))$ and see that \eqref{eq:a_minimiser_is_weak_sol_in_L2_rho_inv} becomes
\begin{align}
0 = \int_0^T\mathrm{d}t\,\langle \vec{w},\mathcal{H}_\varrho\vec{a} \rangle_{L^2(\Omega)}-\langle\vec{w}, \varrho\nabla \frac{\delta\mathcal{F}}{\delta\varrho}[\varrho]\rangle_{L^2(\Omega)},
\end{align}
for every $\vec{w}(\vec{r},t)\in L^2([0,T];L^2(\Omega))$, and implies \eqref{eq:weak_formulation_pair_a}.
\end{proof}

We now consider the behaviour of classical solutions, that is, solutions\\ $\varrho(\vec{r},t)\in C^1([0,\infty);C^2(\Omega))$ and $\vec{a}(\vec{r},t)\in  C([0,\infty);C^{2d}(\Omega))$ such that \eqref{eq:ddft_eq_dyn_rho}--\eqref{eq:ddft_eq_dyn_a} holds pointwise.

\section{Behaviour of Classical Solutions}\label{sec:behaviour_classical_solutions}

We return to equations \eqref{eq:ddft_eq_dyn_rho}--\eqref{eq:ddft_eq_dyn_a}. We will determine that the contraction condition $\|\mu_{\max}\|_{L^\infty([0,T])}\|\bm{Z}_2\|_{L^{\infty}(\Omega)}<1$ is necessary for the invertibility of $\mathcal{H}_{\varrho}$ in equation \eqref{eq:ddft_eq_dyn_a}. Note that, for a fixed density field $\varrho(\vec{r},t)$, \eqref{eq:ddft_eq_dyn_a} may be written in terms of the generalised HI operator \eqref{def:H_phi_op}, as
\begin{align}\label{eq:matrix_operator_friction_eqn}
\mathcal{H}_{\varrho}\vec{a}(\vec{r},t)=\varrho(\vec{r},t)\nabla \frac{\delta\mathcal{F}}{\delta\varrho}[\varrho](\vec{r},t).
\end{align}
In Lemma \ref{lem:H_lambda_is_compact_SA} we establish that $\mathcal{H}_{\varrho}$ is a compact and self-adjoint operator in the weighted space $L^2([0,T];L^2(\Omega, \varrho^{-1}))$. By the Spectral Theorem we may therefore diagonalise $\mathcal{H}_{\varrho}$ by
\begin{align}
\mathcal{H}_{\varrho}\vec{u}_k(\vec{r},t) = \gamma_k\vec{u}(\vec{r},t)\label{eq:diagonalisation_of_H_rho}
\end{align}
where the eigenfunctions $\{\vec{u}_k\}_{k \in\mathbb{N}}$ forms a complete orthonormal basis of $L^2([0,T];L^2(\Omega, \varrho^{-1}))$ and the eigenvalues $\{\gamma_k\}_{k \in\mathbb{N}}\in\mathbb{R}$ for each $k\in\mathbb{N}$. Furthermore, by since $\mathcal{H}_{\varrho}$ is a real and symmetric (Hermitian) operator, $\mathcal{H}^{-1}_{\varrho}$ may be written as a Laplace transform by the Hille--Yosida theorem. Therefore, in principle, $\mathcal{H}^{-1}_{\varrho}$ can be obtained explicitly, at least in terms of a power series.
\\\\
\indent The following lemma establishes a solvability condition for the flux equation in equation \eqref{eq:matrix_operator_friction_eqn}.

\begin{proposition}[Conditional Invertibility of $\mathcal{H}_{\varrho}$]\label{thm:cond_converg_fred_det}
\par
Let $\|\mu_{\max}\|_{L^\infty([0,T])}\|\bm{Z}_2\|_{L^\infty(\Omega)}<1$ then, for the fixed density field $\varrho(\vec{r},t)$, the operator $\mathcal{H}_{\varrho}$ is invertible.
\end{proposition}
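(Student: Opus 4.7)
The plan is to factor $\mathcal{H}_\varrho$ as the composition of two invertible operators on the weighted Hilbert space $L^2(\Omega,\varrho^{-1})$ (pointwise in $t$, with $\varrho(\cdot,t)$ and $\bm{D}_\varrho(\cdot,t)$ playing the role of frozen coefficients), and then to recover the bounded inverse by a Neumann series argument. Using the definition \eqref{def:H_phi_op} one directly factors
\begin{align*}
\mathcal{H}_\varrho \vec{v} \,=\, \bm{D}_\varrho^{-1}\bigl(\vec{v} + \bm{D}_\varrho\,\varrho\,(\bm{Z}_2\star \vec{v})\bigr) \,=\, \bm{D}_\varrho^{-1}(\bm{1}+\mathcal{K})\vec{v}, \qquad \mathcal{K}\vec{v} := \bm{D}_\varrho\,\varrho\,(\bm{Z}_2\star \vec{v}).
\end{align*}
By Assumption \eqref{ass:D_pos_def_weak_diffable} and the discussion following \eqref{eq:def_diffusion_tensor}, $\bm{D}_\varrho$ is pointwise symmetric positive definite with eigenvalues uniformly pinched between $\mu_{\min}^{\varrho}$ and $\mu_{\max}^{\varrho}$, so the multiplication operator $\bm{D}_\varrho^{-1}$ is bounded and invertible on $L^2(\Omega,\varrho^{-1})$. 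Invertibility of $\mathcal{H}_\varrho$ therefore reduces to invertibility of $\bm{1}+\mathcal{K}$ on that space.

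The heart of the argument is to show that $\mathcal{K}$ is a strict contraction. The chain of estimates I have in mind is
\begin{align*}
\|\mathcal{K}\vec{v}\|_{L^2(\Omega,\varrho^{-1})}^{2}
&\leq \|\mu_{\max}\|_{L^\infty([0,T])}^{2}\,\int\mathrm{d}\vec{r}\,\varrho\,|\bm{Z}_2\star \vec{v}|^{2}\\
&\leq \|\mu_{\max}\|_{L^\infty([0,T])}^{2}\,\|\bm{Z}_2\|_{L^\infty(\Omega)}^{2}\,\|\vec{v}\|_{L^1(\Omega)}^{2},
\end{align*}
followed by Cauchy--Schwarz together with the mass normalisation $\int\varrho = 1$ to obtain $\|\vec{v}\|_{L^1(\Omega)}^{2}\leq \|\vec{v}\|_{L^2(\Omega,\varrho^{-1})}^{2}$. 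These combine to give the operator-norm bound $\|\mathcal{K}\|_{\mathrm{op}} \leq \|\mu_{\max}\|_{L^\infty([0,T])}\|\bm{Z}_2\|_{L^\infty(\Omega)}$, which is strictly less than one by the standing hypothesis. The Neumann series $(\bm{1}+\mathcal{K})^{-1} = \sum_{n\geq 0}(-\mathcal{K})^{n}$ then converges in operator norm, yielding $\mathcal{H}_\varrho^{-1} = (\bm{1}+\mathcal{K})^{-1}\bm{D}_\varrho$ as a bounded operator on $L^2(\Omega,\varrho^{-1})$.

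The only real subtlety, and the one place where care is needed, is the choice of function space: the weighted norm $L^2(\Omega,\varrho^{-1})$ is what makes the contraction constant exactly $\|\mu_{\max}\|_{L^\infty([0,T])}\|\bm{Z}_2\|_{L^\infty(\Omega)}$, so that it matches the hypothesis sharply. In an unweighted $L^{2}$ one would pick up additional $\|\varrho\|_{L^\infty}$ factors and would need a stronger smallness assumption. A clean alternative route, avoiding the Neumann series altogether, is to observe that the coercivity estimate \eqref{eq:H_rho_PD} already proved inside Lemma \ref{lem:lax_milgram_n} says exactly that the bilinear form $(\vec{u},\vec{v})\mapsto \langle \vec{u},\mathcal{H}_\varrho \vec{v}\rangle_{L^{2}(\Omega,\varrho^{-1})}$ is coercive precisely under the present hypothesis; invertibility then follows at once from Lax--Milgram, or equivalently from the fact that a bounded, self-adjoint (Lemma \ref{lem:H_lambda_is_compact_SA}), coercive operator on a Hilbert space has a bounded inverse by the spectral theorem.
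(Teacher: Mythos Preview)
Your proof is correct and rests on the same factorisation $\mathcal{H}_\varrho = \bm{D}_\varrho^{-1}(\bm{1}+\mathcal{K})$ as the paper, but the route to invertibility of $\bm{1}+\mathcal{K}$ is genuinely different. The paper proceeds via Fredholm determinant theory: it argues that $\mathcal{K}$ (their $-\mathcal{Z}_\varrho$) is trace class as a composition of Hilbert--Schmidt operators, invokes the identity $\det(\bm{1}-\mathcal{Z}_\varrho)=\exp\{-\sum_{n\geq 1}\mathrm{Tr}(\mathcal{Z}_\varrho^{n})/n\}$, computes the traces in the eigenbasis $\{\vec{u}_k\}$, and then bounds each eigenvalue $|\gamma_k|\leq \|\mu_{\max}\|_{L^\infty([0,T])}\|\bm{Z}_2\|_{L^\infty}$ by exactly the same Cauchy--Schwarz/Jensen manoeuvre you use for the operator norm. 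Your Neumann-series argument is strictly more elementary: it avoids the trace-class machinery and the determinant formula entirely, and it delivers the bounded inverse $(\bm{1}+\mathcal{K})^{-1}=\sum_{n\geq 0}(-\mathcal{K})^n$ constructively, with an explicit operator-norm bound. What the paper's approach buys is a spectral picture (control of each $\gamma_k$ individually), which they then reuse downstream for the eigenfunction expansion of the flux in Corollary~\ref{cor:eigenfn_expansion_of_flux}; your argument gives invertibility more cheaply but does not by itself locate the spectrum. Your closing remark that the coercivity estimate \eqref{eq:H_rho_PD} already suffices via Lax--Milgram is also sound and is arguably the shortest proof of all.
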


\begin{proof}
For each $\varrho:\Omega \times [0,\infty) \to \mathbb{R}^{+}$ we define $\mathcal{A}_{\varrho}:L^2([0,T];L^2(\Omega,\varrho^{-1}))\to L^2([0,T];L^2(\Omega,\varrho^{-1}))$ as 
\begin{align}
\mathcal{A}_\varrho\vec{a} = -\varrho\bm{Z}_2\star \vec{a}
\end{align} 
for $\vec{a}\in L^2([0,T];L^2(\Omega,\varrho^{-1}))$. Then by the assumptions \eqref{ass:V1_V2_in_W_1_inf} one checks that for each density field $\varrho(\vec{r},t)$ the linear operator $\mathcal{A}_\varrho$ is Hilbert Schmidt with $\|\mathcal{A}\|_{\text{HS}}^2 =  \sum_{n \in \mathbb{N}} \|\mathcal{A}[\vec{v}_k,\varrho]\|^2_{L^2(\Omega,\varrho^{-1})}<\infty$, where $\{\vec{v}_k\}_{k\in\mathbb{N}}$ is an orthonormal basis of $L^2([0,T];L^2(\Omega,\varrho^{-1}))$. Additionally, for each $\varrho(\vec{r},t)$, the matrix $\bm{D}_\varrho$ is a bounded, linear, finite rank operator as is therefore Hilbert-Schmidt. We now write $\mathcal{H}_\varrho = \bm{D}_\varrho^{-1}- \mathcal{A}_\varrho = \bm{D}_\varrho^{-1}\left(\bm{1}-\mathcal{Z}_\varrho\right)  : L^2(\Omega,\varrho^{-1})\to L^2(\Omega,\varrho^{-1})$, where $\mathcal{Z}_\varrho:= \bm{D}_\varrho\mathcal{A}_\varrho$. Now $\mathcal{Z}_\varrho = \bm{D}_\varrho\circ\mathcal{A}$ is the composition of two Hilbert-Schmidt operators, and is therefore trace class. By the classical theory \cite{fredholm1900nouvelle, lax2014functional}, we have the identity 
\begin{align}
\det(\mathcal{H}_\varrho) = \det(\bm{D}_\varrho^{-1}) \det(\bm{1}- \mathcal{Z}_\varrho) = \left( \prod_{j = 1}^d \mu_{j}^{-1}\right) \exp\Big\{-\sum_{n=1}^\infty\tfrac{\text{Tr}(\mathcal{Z}_\varrho^n)}{n}  \Big\}.\label{eq:det(1+Z)_in_terms_of_exp}
\end{align}
By positive definiteness of the matrix $\bm{D}_\varrho$ we have that $\prod_{j = 1}^d \mu_{j}^{-1}>0$. All that remains, therefore is to study the convergence of the infinite series in the exponential term in \eqref{eq:det(1+Z)_in_terms_of_exp}. We calculate the trace $\text{Tr}(\mathcal{Z}_\varrho^n)$ with respect to the basis $\{\vec{u}_k\}_{k = 1}^\infty$, thus
\begin{align}
\text{Tr}\mathcal{Z}_\varrho^n &= \sum_{k=1}^\infty\int^{T}_0\mathrm{d}t\,\langle \mathcal{Z}_\varrho^n \vec{u}_k, \vec{u}_k\rangle_{L^2(\Omega,\varrho^{-1})}=\sum_{k=1}^\infty |\gamma_k|^n\| \vec{u}_k\|^2_{L^2([0,T]; L^2(\Omega,\varrho^{-1}))} = \sum_{k=1}^\infty |\gamma_k|^n,
\end{align}
where we have used the fact that the $\vec{u}_k$ are orthonormal in $L^2([0,T]; L^2(\Omega,\varrho^{-1}))$. Now we return to \eqref{eq:det(1+Z)_in_terms_of_exp} and find
\begin{align}
\det(\bm{1}- \mathcal{Z}_\varrho) &=  \exp\Big\{-\sum_{n=1}^\infty\tfrac{\text{Tr}(\mathcal{Z}_\varrho^n)}{n}  \Big\} = \exp\Big\{-\sum_{n=1}^\infty\frac{1}{n}\sum_{k=1}^\infty|\gamma_k|^n  \Big\} = \exp\Big\{-\sum_{k=1}^\infty\sum_{n=1}^\infty\frac{1}{n}|\gamma_k|^n  \Big\}\nonumber\\
& = \exp\Big\{\sum_{k=1}^\infty\log \left(1-|\gamma_k| \right) \Big\} = \Pi_{k = 1}^\infty\left(1-|\gamma_k| \right).\label{eq:det_(1-lambdaZ)_prod}
\end{align}
only if $|\gamma_k   | <1$ for each $k\in\mathbb{N}$. We now estimate $\gamma_k$, by the definition of $\mathcal{Z}_\varrho$ we have
\begin{align}
&|\gamma_k|  = |\gamma_k|\int^T_0\mathrm{d}t\,\langle \vec{u}_k,\vec{u}_k\rangle_{L^2_{\varrho^{-1}}} = \int^T_0\mathrm{d}t\,\int\mathrm{d}\vec{r}\, \varrho^{-1}\vec{u}_k(\vec{r},t)\varrho(\vec{r},t)\bm{D}_\varrho\int\mathrm{d}\vec{r'}\, \bm{Z}_2(\vec{r},\vec{r}')\vec{u}_k(\vec{r}',t)\nonumber\\
& \leq \|\bm{Z}_2\|_{L^\infty}\|\mu_{\max}\|_{L^\infty([0,T])} \int^T_0\mathrm{d}t\,\left( \int\mathrm{d}\vec{r}\,|\vec{u}_k(\vec{r},t)|\right)^2 \\
&\leq \|\bm{Z}_2\|_{L^\infty} \|\mu_{\max}\|_{L^\infty([0,T])}\int^T_0\mathrm{d}t\,\left( \int\mathrm{d}\vec{r}\,\varrho^{1/2}|\varrho^{-1/2}\vec{u}_k(\vec{r},t)|\right)^2\nonumber\\
& \leq \|\bm{Z}_2\|_{L^\infty} \|\mu_{\max}\|_{L^\infty([0,T])}\int^T_0\mathrm{d}t\,\|\vec{u}_k\|^2_{L^2_{\varrho^{-1}}}  \int\mathrm{d}\vec{r}\,|\varrho(\vec{r},t)| \leq \|\mu_{\max}\|_{L^\infty([0,T])}\|\bm{Z}_2\|_{L^\infty},
\end{align}
where we have used the fact that $\|\varrho\|_{L^1} = 1$ (from Corollary \ref{cor:L_1_varrho_is_1}) and the orthonormality of $\vec{u}_k\in L^2([0,T];L^2(\Omega,\varrho^{-1}))$. Hence, for $\mu_{\max}\|\bm{Z}_2\|_{L^\infty}<1$ then $|\gamma_k  |<1$ and therefore $\det(\bm{1}- \mathcal{Z}_\varrho) \neq 0$. Thus lemma is proved.
\end{proof}

We now establish the spectral decomposition of the flux $\vec{a}(\vec{r},t)$ in terms of the eigenbasis $\vec{u}_k\in L^2([0,T];L^2(\Omega,\varrho^{-1}))$.

\begin{lemma}\label{prop:unique_a_flux_FAT}
Let $\|\mu_{\max}\|_{L^\infty([0,T])}\|\bm{Z}_2\|_{L^\infty} < 1$. Then for each density field $\varrho(\vec{r},t)>0$ there exists a unique flux field $\vec{a}(\vec{r},t)\in L^2(\Omega,\varrho^{-1})$ solving \eqref{eq:ddft_eq_dyn_a}.
\end{lemma}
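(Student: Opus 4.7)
The plan is to deduce the lemma from Proposition \ref{thm:cond_converg_fred_det} combined with the Fredholm Alternative Theorem (the FAT in the label), the compactness structure $\mathcal{H}_\varrho = \bm{D}_\varrho^{-1}(\mathbf{1}-\mathcal{Z}_\varrho)$ introduced in its proof, and the coercivity estimate \eqref{eq:H_rho_PD}. The idea is that once we know $\mathcal{H}_\varrho$ is invertible, we simply set $\vec{a}(\vec{r},t)=\mathcal{H}_\varrho^{-1}\bigl(\varrho\nabla\tfrac{\delta\mathcal{F}}{\delta\varrho}[\varrho]\bigr)$ and verify this lies in $L^2(\Omega,\varrho^{-1})$.

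First I would recast \eqref{eq:matrix_operator_friction_eqn}, using the factorisation from the proof of Proposition \ref{thm:cond_converg_fred_det}, as $(\mathbf{1}-\mathcal{Z}_\varrho)\vec{a}=\bm{D}_\varrho\,\varrho\nabla\tfrac{\delta\mathcal{F}}{\delta\varrho}[\varrho]$. Since $\mathcal{Z}_\varrho$ was shown to be trace class on $L^2([0,T];L^2(\Omega,\varrho^{-1}))$, it is in particular compact, so the Fredholm Alternative applies to $\mathbf{1}-\mathcal{Z}_\varrho$: it is invertible if and only if it has trivial kernel. The kernel is trivial precisely because Proposition \ref{thm:cond_converg_fred_det} yields $\det(\mathbf{1}-\mathcal{Z}_\varrho)\neq 0$ under the hypothesis $\|\mu_{\max}\|_{L^\infty([0,T])}\|\bm{Z}_2\|_{L^\infty(\Omega)}<1$. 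Invertibility of $\bm{D}_\varrho$ is immediate from \eqref{ass:D_pos_def_weak_diffable}, so $\mathcal{H}_\varrho$ is bijective on $L^2([0,T];L^2(\Omega,\varrho^{-1}))$ and a candidate solution exists. Alternatively, I could proceed by the spectral expansion \eqref{eq:diagonalisation_of_H_rho}: expand the right-hand side in the orthonormal basis $\{\vec{u}_k\}$, write $\vec{a}=\sum_k\gamma_k^{-1}\langle\vec{u}_k,\varrho\nabla\tfrac{\delta\mathcal{F}}{\delta\varrho}[\varrho]\rangle_{L^2_{\varrho^{-1}}}\vec{u}_k$, and verify the series converges. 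The coercivity computation \eqref{eq:H_rho_PD} guarantees a uniform lower bound $\gamma_k\geq \|\mu_{\max}\|_{L^\infty([0,T])}^{-1}(1-\|\bm{Z}_2\|_{L^\infty(\Omega)}\|\mu_{\max}\|_{L^\infty([0,T])})>0$, so the series is absolutely summable whenever $\varrho\nabla\tfrac{\delta\mathcal{F}}{\delta\varrho}[\varrho]\in L^2(\Omega,\varrho^{-1})$.

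Next I would verify this membership of the right-hand side. Using the explicit form of $\tfrac{\delta\mathcal{F}}{\delta\varrho}$, we have $\varrho\nabla\tfrac{\delta\mathcal{F}}{\delta\varrho}[\varrho]=\nabla\varrho+\varrho\nabla V_1+\varrho(\nabla V_2\star\varrho)$, and by \eqref{ass:V1_V2_in_W_1_inf} together with the regularity of $\varrho$ assumed here (classical solution, so $\varrho\in C^2(\Omega)$ with positive lower bound by the Harnack argument mentioned after \eqref{ass:rho_in_L2_P_ac}), each term has bounded $L^2(\Omega,\varrho^{-1})$-norm on $[0,T]$. Positivity of $\varrho$ is essential here to ensure $\varrho^{-1}$ is finite, which is why the statement requires $\varrho(\vec{r},t)>0$.

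For uniqueness, if $\vec{a}_1,\vec{a}_2$ both solve \eqref{eq:ddft_eq_dyn_a}, then $\mathcal{H}_\varrho(\vec{a}_1-\vec{a}_2)=0$, and pairing with $\vec{a}_1-\vec{a}_2$ in $L^2(\Omega,\varrho^{-1})$ and invoking the positive definiteness \eqref{eq:H_rho_PD} forces $\vec{a}_1=\vec{a}_2$ a.e., exactly as in the uniqueness argument at the end of the proof of Lemma \ref{thm:existence_and_uniqueness_weak_a}. The main obstacle is really bookkeeping rather than substance: one must carefully track the weighted space, confirm that the spectral bound from coercivity transfers to a uniform positive lower bound on $\gamma_k$ (not merely $\gamma_k\neq 0$), and ensure $\varrho^{-1}$ does not spoil integrability, which is the reason the strict positivity of $\varrho$ is built into the hypothesis.
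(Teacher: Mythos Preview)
Your proposal is correct and follows essentially the same route as the paper: invoke the invertibility of $\mathcal{H}_\varrho$ from Proposition~\ref{thm:cond_converg_fred_det} and apply the Fredholm Alternative. The paper's proof is extremely terse---it simply cites self-adjointness (Lemma~\ref{lem:H_lambda_is_compact_SA}), notes that the homogeneous adjoint problem $\mathcal{H}_\varrho\vec{z}=\vec{0}$ has only the trivial solution by Proposition~\ref{thm:cond_converg_fred_det}, and concludes via FAT---whereas you spell out the factorisation $\mathcal{H}_\varrho=\bm{D}_\varrho^{-1}(\mathbf{1}-\mathcal{Z}_\varrho)$ explicitly, apply Fredholm to the compact-perturbation-of-identity form $\mathbf{1}-\mathcal{Z}_\varrho$, and additionally verify membership of the right-hand side in $L^2(\Omega,\varrho^{-1})$ and re-derive uniqueness from coercivity. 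Your version is arguably cleaner in that the Fredholm Alternative is applied to an operator genuinely of the form identity-minus-compact, and your spectral-series alternative (with the uniform eigenvalue lower bound from \eqref{eq:H_rho_PD}) anticipates exactly what the paper records next as Corollary~\ref{cor:eigenfn_expansion_of_flux}.
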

\begin{proof}
By Lemma \ref{lem:H_lambda_is_compact_SA} we have that $\mathcal{H}_{\varrho}$ is a self-adjoint operator. Therefore, for a given density field $\varrho(\vec{r},t)$ the homogeneous adjoint problem may be written as
\begin{align}
\mathcal{H}_{\varrho}^\ast[\vec{z},\varrho] = \mathcal{H}_{\varrho}[\vec{z},\varrho] = \vec{0}.
\end{align}
By Proposition \ref{thm:cond_converg_fred_det} we have that $\mathcal{H}_{\varrho}$ is invertible with full rank, thus we have $\vec{z} = \vec{0}$ uniquely, and by the Fredholm Alternative there exists a unique solution $\vec{a}(\vec{r},  t)$ for each $\vec{r}\in \Omega$ and $t\in [0,\infty)$ to the inhomogeneous problem \eqref{eq:matrix_operator_friction_eqn}, equivalently \eqref{eq:ddft_eq_dyn_a}.
\end{proof}

The invertibility result from Proposition \ref{thm:cond_converg_fred_det} and the uniqueness result from Lemma \ref{prop:unique_a_flux_FAT} lets us present the following result on the implicit gradient flow form of the dynamics \eqref{eq:ddft_eq_dyn_rho}--\eqref{eq:ddft_eq_dyn_a}.

\begin{corollary}[Implicit Gradient Flow Form]{\ \\}
Let $\varrho(\vec{r},t)$ be a classical solution to \eqref{eq:ddft_eq_dyn_rho}, and suppose $\|\mu_{\max}\|_{L^\infty([0,T])}\|\bm{Z}_2\|_{L^\infty(\Omega)}<1$. Then, \eqref{eq:ddft_eq_dyn_rho}--\eqref{eq:ddft_eq_dyn_a} may be written in the closed implicit gradient flow form
\begin{align}
\partial_{t}\varrho = \nabla \cdot \left(\mathcal{H}^{-1}_{\varrho}\varrho \nabla \tfrac{\delta \mathcal{F}}{\delta \varrho}[\varrho]\right).\label{eq:gradient_flow_struct_rho}
\end{align}
\end{corollary}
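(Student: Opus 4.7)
The plan is to read the corollary directly off the structural identifications already established in the preliminaries. First, by computing the functional derivative of $\mathcal{F}$ defined in \eqref{eq:def_of_F}, one finds
\begin{align*}
\frac{\delta\mathcal{F}}{\delta\varrho}[\varrho] = \log\varrho + V_1 + V_2\star\varrho,
\end{align*}
so that $\varrho\nabla(\delta\mathcal{F}/\delta\varrho)[\varrho] = \nabla\varrho + \varrho\nabla V_1 + \varrho(\nabla V_2\star\varrho)$, which is precisely $\vec{f}(\vec{r},\varrho,t)$ as defined in \eqref{eq:body_force_f}. Simultaneously, using the definition \eqref{def:H_phi_op} of $\mathcal{H}_\varrho$ together with the identification $\bm{D}_\varrho^{-1}=\bm{1}+\bm{Z}_1\star\varrho$ from \eqref{eq:def_diffusion_tensor}, the left-hand side of \eqref{eq:ddft_eq_dyn_a} is exactly $\mathcal{H}_\varrho\vec{a}$. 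Thus \eqref{eq:ddft_eq_dyn_a} is equivalent to the operator equation
\begin{align*}
\mathcal{H}_\varrho\vec{a}(\vec{r},t) = \varrho(\vec{r},t)\nabla\frac{\delta\mathcal{F}}{\delta\varrho}[\varrho](\vec{r},t).
\end{align*}

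Under the hypothesis $\|\mu_{\max}\|_{L^\infty([0,T])}\|\bm{Z}_2\|_{L^\infty(\Omega)}<1$, Proposition \ref{thm:cond_converg_fred_det} guarantees that the Fredholm determinant of $\mathcal{H}_\varrho$ is nonzero and hence $\mathcal{H}_\varrho$ is invertible; Lemma \ref{prop:unique_a_flux_FAT}, via the Fredholm Alternative, identifies the unique classical flux as
\begin{align*}
\vec{a}(\vec{r},t) = \mathcal{H}_\varrho^{-1}\!\left(\varrho\nabla\frac{\delta\mathcal{F}}{\delta\varrho}[\varrho]\right)(\vec{r},t).
\end{align*}
Substituting this representation back into the continuity equation \eqref{eq:ddft_eq_dyn_rho} collapses the system to the closed scalar equation \eqref{eq:gradient_flow_struct_rho}, which is the desired implicit gradient-flow form.

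The argument is thus essentially an algebraic substitution once the operator identifications are made, and there is no single hard step left in this corollary; the analytic content has already been absorbed into Proposition \ref{thm:cond_converg_fred_det} (the Hilbert--Schmidt/trace-class arithmetic delivering invertibility of $\mathcal{H}_\varrho$) and Lemma \ref{prop:unique_a_flux_FAT} (uniqueness of $\vec{a}$ for a given $\varrho$). The only point I would verify carefully is that the classical regularity $\varrho\in C^1([0,\infty);C^2(\Omega))$ and $\vec{a}\in C([0,\infty);C^{2d}(\Omega))$ assumed in Theorem \ref{thm:eig_expan_gradient_flow_lyapunov} is inherited by the composition $\nabla\cdot\bigl(\mathcal{H}_\varrho^{-1}\varrho\nabla(\delta\mathcal{F}/\delta\varrho)[\varrho]\bigr)$, so that \eqref{eq:gradient_flow_struct_rho} is a pointwise identity rather than merely a distributional one. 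This follows because $\mathcal{H}_\varrho^{-1}$, being an inverse of an integral operator with smooth kernel acting on a smooth right-hand side, preserves the requisite smoothness, so no additional regularity theory is needed.
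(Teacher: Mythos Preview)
Your proposal is correct and follows essentially the same approach as the paper, which presents this corollary without an explicit proof, noting only that it follows from the invertibility result of Proposition \ref{thm:cond_converg_fred_det} and the uniqueness result of Lemma \ref{prop:unique_a_flux_FAT}. Your argument makes explicit the algebraic identifications (that the right-hand side of \eqref{eq:ddft_eq_dyn_a} equals $\varrho\nabla(\delta\mathcal{F}/\delta\varrho)$ and the left-hand side equals $\mathcal{H}_\varrho\vec{a}$, cf.\ \eqref{eq:matrix_operator_friction_eqn}) and then substitutes the inverted flux into the continuity equation, which is precisely the intended reasoning.
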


\begin{remark}
We note that the invertibility of result Proposition \ref{thm:cond_converg_fred_det} does not require positive definiteness of $\mathcal{H}_\varrho$. Therefore the gradient flow structure \ref{eq:gradient_flow_struct_rho} would apply to apply to a wider variety of operators $\mathcal{H}_\varrho$, not necessarily carrying physically motivated properties from fluid mechanics, such as the strict positivity of the rate of mechanical energy dissipation.
\end{remark}

The following corollary provides a unique eigenfunction expansion for the flux $\vec{a}(\vec{r},  t)$ solving \eqref{eq:matrix_operator_friction_eqn} for each time dependent $\varrho(\vec{r},t)$. 

\begin{corollary}[Eigenfunction Expansion of the Flux $\vec{a}(\vec{r}, {[\varrho]}, t)$]\label{cor:eigenfn_expansion_of_flux}
For each density field $\varrho(\vec{r},t)>0$, we have the unique expression of the flux $\vec{a}(\vec{r},  t)\in L^2(\Omega,\varrho^{-1})$
\begin{align}
\vec{a}(\vec{r},  t) = \sum_{n\in\mathbb{N}}\gamma_n^{-1}\langle\vec{u}_n,\nabla \frac{\delta\mathcal{F}}{\delta\varrho}[\varrho]\rangle_{L^2(\Omega)} \vec{u}_n(\vec{r},t).
\end{align}
\end{corollary}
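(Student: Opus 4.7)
The plan is to exploit the spectral structure of $\mathcal{H}_\varrho$ established in Lemma \ref{lem:H_lambda_is_compact_SA} and Proposition \ref{thm:cond_converg_fred_det}, and then use the orthonormal eigenbasis $\{\vec{u}_k\}_{k\in\mathbb{N}}$ to invert the flux equation \eqref{eq:matrix_operator_friction_eqn} componentwise. Since $\mathcal{H}_\varrho$ is compact and self-adjoint on $L^2(\Omega,\varrho^{-1})$, the spectral theorem supplies a complete orthonormal basis $\{\vec{u}_k\}_{k\in\mathbb{N}}$ with real eigenvalues $\gamma_k$ satisfying \eqref{eq:diagonalisation_of_H_rho}, and by Proposition \ref{thm:cond_converg_fred_det} every $\gamma_k$ is nonzero under the contraction hypothesis $\|\mu_{\max}\|_{L^\infty([0,T])}\|\bm{Z}_2\|_{L^\infty(\Omega)}<1$.

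First I would invoke Lemma \ref{prop:unique_a_flux_FAT} to get the unique $\vec{a}(\vec{r},t)\in L^2(\Omega,\varrho^{-1})$ solving $\mathcal{H}_\varrho\vec{a} = \varrho\nabla\tfrac{\delta\mathcal{F}}{\delta\varrho}[\varrho]$. Next I would expand this $\vec{a}$ in the eigenbasis, writing
\begin{align}
\vec{a}(\vec{r},t) = \sum_{n\in\mathbb{N}}c_n(t)\,\vec{u}_n(\vec{r},t),\qquad c_n(t) = \langle \vec{a},\vec{u}_n\rangle_{L^2(\Omega,\varrho^{-1})},
\end{align}
with convergence in $L^2(\Omega,\varrho^{-1})$. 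Taking the $L^2(\Omega,\varrho^{-1})$-inner product of \eqref{eq:matrix_operator_friction_eqn} against $\vec{u}_n$, using self-adjointness of $\mathcal{H}_\varrho$ and the eigenvalue relation $\mathcal{H}_\varrho\vec{u}_n = \gamma_n\vec{u}_n$, gives
\begin{align}
\gamma_n c_n(t) = \langle \mathcal{H}_\varrho\vec{a},\vec{u}_n\rangle_{L^2(\Omega,\varrho^{-1})} = \Big\langle \varrho\nabla\tfrac{\delta\mathcal{F}}{\delta\varrho}[\varrho],\vec{u}_n\Big\rangle_{L^2(\Omega,\varrho^{-1})}.
\end{align}
The key simplification is that the weight $\varrho^{-1}$ in the inner product cancels the prefactor $\varrho$ on the right-hand side, i.e.
\begin{align}
\Big\langle \varrho\nabla\tfrac{\delta\mathcal{F}}{\delta\varrho}[\varrho],\vec{u}_n\Big\rangle_{L^2(\Omega,\varrho^{-1})} = \int \mathrm{d}\vec{r}\, \varrho^{-1}\varrho\,\vec{u}_n\cdot\nabla\tfrac{\delta\mathcal{F}}{\delta\varrho}[\varrho] = \Big\langle \vec{u}_n,\nabla\tfrac{\delta\mathcal{F}}{\delta\varrho}[\varrho]\Big\rangle_{L^2(\Omega)},
\end{align}
and dividing by $\gamma_n\neq 0$ yields $c_n(t) = \gamma_n^{-1}\langle \vec{u}_n,\nabla\tfrac{\delta\mathcal{F}}{\delta\varrho}[\varrho]\rangle_{L^2(\Omega)}$, giving the claimed expansion.

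The main subtlety, rather than an obstacle, is ensuring the expansion is well-posed: I need to check that $\varrho\nabla\tfrac{\delta\mathcal{F}}{\delta\varrho}[\varrho]\in L^2(\Omega,\varrho^{-1})$ so that the Parseval expansion of the right-hand side converges, which follows from the regularity of $\varrho$, the assumption \eqref{ass:V1_V2_in_W_1_inf} on $V_1, V_2$, and positivity of $\varrho$; and that the coefficients $\gamma_n^{-1}\langle \vec{u}_n,\nabla\tfrac{\delta\mathcal{F}}{\delta\varrho}[\varrho]\rangle_{L^2(\Omega)}$ are square-summable, which is essentially the statement $\vec{a}\in L^2(\Omega,\varrho^{-1})$ already granted by Lemma \ref{prop:unique_a_flux_FAT}. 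Uniqueness of the expansion is then automatic from the orthonormality of $\{\vec{u}_n\}$ in $L^2(\Omega,\varrho^{-1})$, completing the proof.
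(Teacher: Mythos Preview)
Your proposal is correct and is precisely the argument the paper has in mind: the corollary is stated without proof, as an immediate consequence of the spectral properties of $\mathcal{H}_\varrho$ established in Lemma~\ref{lem:H_lambda_is_compact_SA}, Proposition~\ref{thm:cond_converg_fred_det}, and Lemma~\ref{prop:unique_a_flux_FAT}. Your observation that the weight $\varrho^{-1}$ cancels the prefactor $\varrho$ to convert the $L^2(\Omega,\varrho^{-1})$ pairing into the unweighted $L^2(\Omega)$ pairing is exactly the point that makes the stated formula come out with $\langle\vec{u}_n,\nabla\tfrac{\delta\mathcal{F}}{\delta\varrho}\rangle_{L^2(\Omega)}$ rather than a weighted inner product.
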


We now establish that stationary densities $\varrho_0(\vec{r})$ solving \eqref{eq:ddft_rho_stationary}--\eqref{eq:ddft_a_stationary} are in fact minimisers of the free energy functional $\mathcal{F}[\cdot]$ given in \eqref{eq:def_of_F}. Since $\varrho_(\vec{r})$ are independent of time we see, by \eqref{eq:def_diffusion_tensor}, that $\bm{D}_\varrho$ is also independent of time and hence each eigenvalue of $\bm{D}_\varrho$, $\mu_j$ is independent of time for $j = 1,\cdots, d$. Therefore, for the remaining results of the section it will be observed that $\|\mu_{j}\|_{L^\infty([0,T])} = \mu_j$, and in particular $\|\mu_{\max}\|_{L^\infty([0,T])} = \mu_{\max}$. Additionally, for $\mathcal{H}_\varrho$, we assume that in stationarity (i.e., as $t\to\infty$) we have the diagonalisation
\begin{align}
\lim_{t\to\infty}\mathcal{H}_\varrho\vec{u}(\vec{r},t) = \mathcal{H}_\varrho \vec{u}(\vec{r} ) = \gamma_k \vec{u}(\vec{r}),\label{eq:diagonalisation_of_H_rho_stationary}
\end{align}
where $\vec{u}(\vec{r})$ form an orthonormal basis of $L^2(\Omega,\varrho^{-1})$ (where $\varrho$ is a stationary density solving \eqref{eq:ddft_rho_stationary}--\eqref{eq:ddft_a_stationary}), and $\gamma_k$ are each independent of time for $k\in \mathbb{N}$. As such \eqref{eq:diagonalisation_of_H_rho_stationary} is the stationary counterpart to the diagonalistion \eqref{eq:diagonalisation_of_H_rho}.

\begin{proposition}[$\varrho(\vec{r})$ is a Critical Point of the Free Energy]\label{prop:association _of_free_energy}
\par
Let $\mu_{\max}\|\bm{Z}_2\|_{L^\infty(\Omega)}<1$ and $V_1 = V_1^{eq}(\vec{r})$ and $V_2 = V_2^{eq})$ be time independent functions so that $\varrho(\vec{r})>0$ is a stationary density to the system \eqref{eq:ddft_rho_stationary}--\eqref{eq:ddft_a_stationary}. Then $\varrho(\vec{r})$ is a critical point of $\mathcal{F}[\varrho]$, such that
\begin{align}
\frac{\delta\mathcal{F}}{\delta\varrho}[\varrho(\vec{r})]=\mu,\label{eq:euler-lagrange_eqn}
\end{align}
where $\mu$ is the chemical potential, a constant independent of both time and space.
\end{proposition}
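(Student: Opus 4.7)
The plan is to derive the Euler--Lagrange identity $\nabla(\delta\mathcal{F}/\delta\varrho)[\varrho] = \vec{0}$ by testing the stationary system against the functional derivative and exploiting the positive definiteness of $\mathcal{H}_\varrho$. A direct calculation from \eqref{eq:def_of_F} using $\varrho>0$ and the symmetry of $V_2^{eq}$ yields
\begin{align}
\frac{\delta\mathcal{F}}{\delta\varrho}[\varrho](\vec{r}) = \log\varrho(\vec{r}) + V_1^{eq}(\vec{r}) + (V_2^{eq}\star\varrho)(\vec{r}),
\end{align}
so the body force can be rewritten as $\vec{f}^{eq}(\vec{r},\varrho) = \varrho\,\nabla(\delta\mathcal{F}/\delta\varrho)[\varrho]$. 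Consequently the stationary flux equation \eqref{eq:ddft_a_stationary} reads $\mathcal{H}_\varrho\vec{a}_0 = \varrho\,\nabla(\delta\mathcal{F}/\delta\varrho)[\varrho]$, which by Proposition \ref{thm:cond_converg_fred_det} inverts to give $\vec{a}_0 = \mathcal{H}_\varrho^{-1}\,\varrho\,\nabla(\delta\mathcal{F}/\delta\varrho)[\varrho]$.

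Next I would multiply the divergence-free condition \eqref{eq:ddft_rho_stationary} by $\delta\mathcal{F}/\delta\varrho[\varrho]$, integrate over $\Omega$, and integrate by parts: the no-flux boundary condition $\vec{a}_0\cdot\vec{n}\big|_{\partial\Omega}=0$ eliminates the boundary contribution, and substituting the expression for $\vec{a}_0$ just obtained gives
\begin{align}
0 = \int_\Omega \mathrm{d}\vec{r}\,\nabla\frac{\delta\mathcal{F}}{\delta\varrho}[\varrho]\cdot\vec{a}_0 = \int_\Omega \mathrm{d}\vec{r}\,\varrho^{-1}\bigl(\varrho\,\nabla\tfrac{\delta\mathcal{F}}{\delta\varrho}[\varrho]\bigr)\cdot\mathcal{H}_\varrho^{-1}\bigl(\varrho\,\nabla\tfrac{\delta\mathcal{F}}{\delta\varrho}[\varrho]\bigr).
\end{align}
By \eqref{eq:H_rho_PD} the operator $\mathcal{H}_\varrho$ is strictly coercive on $L^2(\Omega,\varrho^{-1})$ under the contraction hypothesis $\mu_{\max}\|\bm{Z}_2\|_{L^\infty(\Omega)}<1$, so $\mathcal{H}_\varrho^{-1}$ inherits strict positive definiteness, with eigenvalues given by the positive reciprocals of those in \eqref{eq:diagonalisation_of_H_rho_stationary}. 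The right-hand side is therefore a non-negative quadratic form on $L^2(\Omega,\varrho^{-1})$, and its vanishing forces $\varrho\,\nabla(\delta\mathcal{F}/\delta\varrho)[\varrho]\equiv\vec{0}$; since $\varrho(\vec{r})>0$ pointwise, this in turn yields $\nabla(\delta\mathcal{F}/\delta\varrho)[\varrho]\equiv\vec{0}$ on $\Omega$.

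Finally, since $\Omega$ is a bounded connected domain, the vanishing gradient forces $(\delta\mathcal{F}/\delta\varrho)[\varrho]$ to equal a single constant, which I name $\mu$ and identify as the chemical potential, yielding \eqref{eq:euler-lagrange_eqn}. The most delicate step is justifying the integration by parts rigorously: one must verify that $\log\varrho\in H^1(\Omega)$ so that its boundary trace can be paired with $\vec{a}_0\cdot\vec{n}$, which follows from $\varrho\in C^\infty(\overline{\Omega})$ together with strict positivity of $\varrho$ on the compact closure bounding both $\log\varrho$ and $\nabla\varrho/\varrho$ uniformly; and one must confirm that the spectral decomposition \eqref{eq:diagonalisation_of_H_rho_stationary} indeed diagonalises $\mathcal{H}_\varrho^{-1}$ with strictly positive eigenvalues so the quadratic form controls $\varrho\,\nabla(\delta\mathcal{F}/\delta\varrho)[\varrho]$ in the weighted $L^2(\Omega,\varrho^{-1})$ norm.
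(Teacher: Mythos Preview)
Your proof is correct and follows essentially the same route as the paper: both test the divergence-free condition against $\delta\mathcal{F}/\delta\varrho$, integrate by parts using the no-flux boundary condition, substitute $\vec{a}_0 = \mathcal{H}_\varrho^{-1}\varrho\,\nabla(\delta\mathcal{F}/\delta\varrho)$, and invoke the strict positive definiteness of $\mathcal{H}_\varrho$ (equivalently of $\mathcal{H}_\varrho^{-1}$) on $L^2(\Omega,\varrho^{-1})$ to force $\varrho\,\nabla(\delta\mathcal{F}/\delta\varrho)=\vec{0}$. The only cosmetic difference is that the paper introduces the square root $\mathcal{H}_\varrho^{1/2}$ to exhibit the quadratic form as a norm, whereas you appeal directly to the spectral decomposition; your added remarks on the regularity needed for the integration by parts are a welcome refinement.
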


\begin{proof}

Let $\varrho(\vec{r})$ be a stationary density. Then by equation \eqref{eq:ddft_a_stationary} one has
\begin{align}
\mathcal{H}_{\varrho}\vec{a}&=\varrho(\vec{r})\nabla \frac{\delta\mathcal{F}}{\delta\varrho}[\varrho](\vec{r}),\label{eq:stationary_eqn_for_a_dimensionless}\\
\nabla \cdot \vec{a}(\vec{r})&=0.\label{eq:incomp_a_at_equilibrium}
\end{align}
By Lemma \ref{lem:H_lambda_is_compact_SA} we have that the operator $\mathcal{H}_{\varrho}$ is compact self-adjoint in $L^2(\Omega,\varrho^{-1})$ (for $\varrho$ a stationary density solving \eqref{eq:ddft_rho_stationary}--\eqref{eq:ddft_a_stationary}) and by Proposition \ref{thm:cond_converg_fred_det} we have that $\mathcal{H}_{\varrho}$ is invertible. With this, and by using equation \eqref{eq:ddft_rho_stationary}, we have
\begin{align}\label{eq:div_stationary_eqn_in_terms_of_F}
0 = \nabla \cdot \vec{a} = \nabla \cdot \left(\mathcal{H}^{-1}_{\varrho}\varrho(\vec{r})\nabla \frac{\delta\mathcal{F}}{\delta\varrho}[\varrho](\vec{r})\right). 
\end{align}
Now since $\varrho$ is stationary a stationary density, we have that $\partial_t\varrho = 0$ and we see that
\begin{align}
0 = \Big\langle \frac{\delta\mathcal{F}}{\delta\varrho}[\varrho],\partial_t\varrho\Big\rangle_{L^2(\Omega)} = -\int \mathrm{d}\vec{r}\, \frac{\delta\mathcal{F}}{\delta\varrho}[\varrho]\nabla \cdot \vec{a} = \int \mathrm{d}\vec{r}\, \nabla \frac{\delta\mathcal{F}}{\delta\varrho}[\varrho]\cdot \vec{a}
\end{align}
where we have used the divergence theorem and the no-flux boundary condition \eqref{bc:DDFT} to eliminate the boundary integral. We now show that $\mathcal{H}_{\varrho}$ is strictly positive definite. In particular let $\vec{v}\in L^2(\Omega,\varrho^{-1})$ then 
\begin{align}
\langle \vec{v}(\vec{r}), \mathcal{H}_{\varrho}\vec{v}\rangle_{L^2_{\varrho^{-1}}} = \int \mathrm{d}\vec{r}\, \vec{v}(\vec{r})\cdot \bm{D}_\varrho \int \mathrm{d}\vec{r}'   \bm{Z}_2(\vec{r},\vec{r}')\vec{v}(\vec{r}') + \int \mathrm{d}\vec{r}\, \varrho^{-1} |\vec{v}(\vec{r})|^2.
\end{align}
We now bound the first integral below by using \eqref{ass:V1_V2_in_W_1_inf}, in particular
\begin{align}
\langle \vec{v}(\vec{r}), \mathcal{H}_{\varrho}\vec{v}\rangle_{L^2_{\varrho^{-1}}} &\geq -\mu_{\max}\|\bm{Z}_2\|_{L^\infty} \left(\int\mathrm{d}\vec{r}\, |\vec{v}(\vec{r})|\right)^2 + \int \mathrm{d}\vec{r}\, \varrho^{-1} |\vec{v}(\vec{r})|^2\nonumber\\
& \geq -\mu_{\max}\|\bm{Z}_2\|_{L^\infty}\int\mathrm{d}\vec{r}\, \varrho^{-1}|\vec{v}(\vec{r})|^2 + \int\mathrm{d}\vec{r}\, \varrho^{-1}|\vec{v}(\vec{r})|^2\\
 & = \left(1-\mu_{\max}\|\bm{Z}_2\|_{L^\infty}\right)  \|\vec{v}\|^2_{L^2_{\varrho^{-1}}} >0,
\end{align}
where we have used Jensen's inequality to bound the first term below and the fact that $\mu_{max}\|\bm{Z}_2\|_{L^\infty}<1$. Hence $\mathcal{H}_{\varrho}$ is strictly positive definite.

Now since $\mathcal{H}_{\varrho}$ is strictly positive definite and self-adjoint in $L^2(\Omega,\varrho^{-1})$ it possesses a unique strictly positive definite self-adjoint square root in $L^2(\Omega,\varrho^{-1})$ (see \cite{wouk1966note}) such that $\mathcal{H}_{\varrho} = \mathcal{H}_{\varrho}^{1/2}\mathcal{H}_{\varrho}^{1/2}$. Thus we find
\begin{align}
0 &= \int \mathrm{d}\vec{r}\, \nabla \frac{\delta\mathcal{F}}{\delta\varrho}[\varrho]\cdot \vec{a} = \int \mathrm{d}\vec{r}\, \nabla \frac{\delta\mathcal{F}}{\delta\varrho}[\varrho]\cdot \mathcal{H}_{\varrho} \varrho\nabla \frac{\delta\mathcal{F}}{\delta\varrho}[\varrho]\nonumber\\ 
&= \Big\langle \varrho \bm{D}\nabla \frac{\delta\mathcal{F}}{\delta\varrho}[\varrho],\mathcal{H}_{\varrho} \varrho\nabla \frac{\delta\mathcal{F}}{\delta\varrho}[\varrho]\Big\rangle_{L^2(\Omega,(\varrho\bm{D})^{-1})}=\Big\langle \mathcal{H}_{\varrho}^{1/2} \varrho\nabla \frac{\delta\mathcal{F}}{\delta\varrho}[\varrho],\mathcal{H}_{\varrho}^{1/2} \varrho\nabla \frac{\delta\mathcal{F}}{\delta\varrho}[\varrho]\Big\rangle_{L^2(\Omega,\varrho^{-1})}\nonumber\\
&=\Big\|\mathcal{H}_{\varrho}^{1/2} \varrho\nabla \frac{\delta\mathcal{F}}{\delta\varrho}[\varrho]\Big\|_{L^2(\Omega,\varrho^{-1})}^2,\label{eq:X_sqrt_energy_argument}
\end{align}
where we have used the self-adjoint property of $\mathcal{H}_{\varrho}^{1/2}$. From the above we deduce that, since the integrand in the last line of \eqref{eq:X_sqrt_energy_argument} is positive, that the stationary density $\varrho(\vec{r})$ satisfies the Euler-Lagrange equation
\begin{align}\label{eq:grad_F_rho_is_zero}
\nabla \frac{\delta\mathcal{F}}{\delta\varrho}[\varrho(\vec{r})] = \vec{0} \Rightarrow  \frac{\delta\mathcal{F}}{\delta\varrho}[\varrho(\vec{r})]-\mu =0
\end{align}
for a.e. $\vec{r}\in \Omega$, where $\mu$ is the chemical potential. Therefore we obtain that $\varrho$ is a critical point the free energy $\mathcal{F}[\varrho]$.
\end{proof}

From Proposition \ref{prop:association _of_free_energy} we obtain the following corollary. 

\begin{corollary}\label{cor:stationary_density_independent_of_D}

Let $\mu_{\max}\|\bm{Z}_2\|<1$ and let $\varrho(\vec{r})$ be a critical point of the free energy $\mathcal{F}$, then
\begin{enumerate}
\item $\varrho(\vec{r})$ is an equilibrium density of the system \eqref{eq:ddft_rho_stationary}--\eqref{eq:ddft_a_stationary}.
\item The equilibrium density $\varrho(\vec{r})$ is independent of $\bm{Z}_1$, $\bm{Z}_2$ and, as a consequence, of $\bm{D}$.
\item The equilibrium flux solving \eqref{eq:ddft_rho_stationary}--\eqref{eq:ddft_a_stationary} is $\vec{a}(\vec{r})\,!= \vec{0}$. 
\end{enumerate}
In particular, there do not exist stationary densities which are advected by the existence of some finite, nonhomogeous flux and hence the only stationary states of \eqref{eq:ddft_eq_dyn_rho}--\eqref{eq:ddft_eq_dyn_a} are equilibrium states.
\end{corollary}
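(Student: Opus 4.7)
The plan is to leverage Proposition \ref{prop:association _of_free_energy} together with the invertibility of $\mathcal{H}_\varrho$ from Proposition \ref{thm:cond_converg_fred_det}, and simply exploit the explicit form of the free energy $\mathcal{F}$ given in \eqref{eq:def_of_F}. The three items are tightly linked: items 1 and 3 follow from a single observation about the stationary flux equation, while item 2 follows from the structural independence of $\delta\mathcal{F}/\delta\varrho$ on the HI tensors.

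For items 1 and 3, let $\varrho(\vec{r})$ be a critical point of $\mathcal{F}$. By definition this means $\delta\mathcal{F}/\delta\varrho[\varrho] = \mu$ is constant, so $\nabla \delta\mathcal{F}/\delta\varrho[\varrho] = \vec{0}$. Substituting into the right-hand side of the stationary flux equation \eqref{eq:ddft_a_stationary} (recast via $\mathcal{H}_\varrho$ as in \eqref{eq:matrix_operator_friction_eqn}) one obtains $\mathcal{H}_\varrho \vec{a} = \vec{0}$. Since $\mu_{\max}\|\bm{Z}_2\|_{L^\infty(\Omega)}<1$, Proposition \ref{thm:cond_converg_fred_det} ensures that $\mathcal{H}_\varrho$ is invertible on $L^2(\Omega,\varrho^{-1})$, forcing $\vec{a} \equiv \vec{0}$. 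This establishes item 3, and simultaneously verifies item 1 since the pair $(\varrho, \vec{0})$ automatically satisfies the incompressibility condition $\nabla \cdot \vec{a}_0 = 0$ as well as the no-flux boundary condition trivially.

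For item 2, the key is a direct computation of the functional derivative from \eqref{eq:def_of_F}: using symmetrisation of $V_2^{eq}$ in the quadratic term one obtains
\begin{align*}
\frac{\delta \mathcal{F}}{\delta \varrho}[\varrho](\vec{r}) = \log \varrho(\vec{r}) + V_1^{eq}(\vec{r}) + (V_2^{eq} \star \varrho)(\vec{r}),
\end{align*}
so the Euler--Lagrange equation \eqref{eq:euler-lagrange_eqn} reads
\begin{align*}
\log \varrho(\vec{r}) + V_1^{eq}(\vec{r}) + (V_2^{eq} \star \varrho)(\vec{r}) = \mu.
\end{align*}
Neither $\bm{Z}_1$ nor $\bm{Z}_2$ appears in this fixed-point equation characterising $\varrho$, and since $\bm{D}_\varrho$ is built from $\bm{Z}_1$ through \eqref{eq:def_diffusion_tensor}, the stationary density is also independent of $\bm{D}_\varrho$. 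The final sentence of the corollary then follows: any stationary state of \eqref{eq:ddft_rho_stationary}--\eqref{eq:ddft_a_stationary} has vanishing flux by item 3, hence no nontrivial advected steady state exists.

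There is no real obstacle here: the entire argument is a short consequence of invertibility of $\mathcal{H}_\varrho$ and the fact that the HI tensors enter the dynamics only through the kinetic operator $\mathcal{H}_\varrho$ and not through $\mathcal{F}$. The one point requiring care is that invoking Proposition \ref{thm:cond_converg_fred_det} presumes $\varrho>0$ so that the weighted space $L^2(\Omega,\varrho^{-1})$ is well-defined; this is part of the standing hypothesis carried over from Proposition \ref{prop:association _of_free_energy}.
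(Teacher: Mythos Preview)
Your proof is correct and matches the paper's approach: the paper states this corollary without proof, treating it as immediate from Proposition~\ref{prop:association _of_free_energy} together with the invertibility of $\mathcal{H}_\varrho$ (Proposition~\ref{thm:cond_converg_fred_det}), which is exactly the route you take. The only minor point worth tightening is the final sentence: your item~3 was proved under the standing hypothesis that $\varrho$ is a critical point, so to conclude that \emph{every} stationary state has vanishing flux you should explicitly invoke Proposition~\ref{prop:association _of_free_energy} (stationary $\Rightarrow$ critical point) before applying your item~3 argument; this closes the loop and gives the ``only equilibrium states'' conclusion.
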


We remark that Corollary \ref{cor:stationary_density_independent_of_D} is related to the well-known result that for finite dimensional reversible diffusions, i.e. Langevin dynamics of the form $dX_t = - D(X_t) \nabla V(X_t) \, dt + \nabla \cdot D(X_t) \, dt + \sqrt{2 D(X_t)} \, dW_t$ for an arbitrary strictly positive definite mobility matrix $D$, $V$ a confining potential and Wiener process $W_t$, the invariant measure $\mu(dx) = \frac{1}{Z} e^{-V(x)} \, dx$ is independent of $D$. We refer to \cite[Sec 4.6]{pavliotis2014stochastic}. To our knowledge, this is the first instance where such a result is proven in the context of DDFT. 
\\\\
\indent With the above results we establish the Lyapunov stability of the dynamics of $\varrho(\vec{r},t)$ solving \eqref{eq:ddft_eq_dyn_rho}--\eqref{eq:ddft_eq_dyn_a} with the following corollary.

\begin{corollary}[Lyapunov Stability for $\varrho(\vec{r},t)$]\label{cor:gradient_flow_structure}
Let $\|\mu_{\max}\|_{L^\infty([0,T])}\|\bm{Z}_2\|_{L^\infty} < 1$, then and let $\varrho(\vec{r},t)$ be a classical solution to \eqref{eq:ddft_eq_dyn_rho}-- \eqref{eq:ddft_eq_dyn_a}. Then, we have the following H-Theorem for $t\in [0,\infty)$:
\begin{align}
\der[]{t}\mathcal{F}[\varrho](t) = -\Big\| \mathcal{H}^{-1/2}_{\varrho} \varrho \nabla \frac{\delta\mathcal{F}}{\delta\varrho}[\varrho]\Big\|^2_{L^2_{\varrho^{-1}}}\leq 0.\label{eq:lyapunov_stability}
\end{align}
\end{corollary}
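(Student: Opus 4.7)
The plan is to differentiate $\mathcal{F}[\varrho]$ along the flow, substitute the implicit gradient flow form established in the corollary immediately preceding (namely $\partial_t\varrho = \nabla\cdot(\mathcal{H}^{-1}_{\varrho}\varrho\nabla\tfrac{\delta\mathcal{F}}{\delta\varrho}[\varrho])$), integrate by parts using the no-flux boundary condition, and then recognise the resulting bilinear expression as the square of a $L^2_{\varrho^{-1}}$-norm involving the square root of $\mathcal{H}_{\varrho}^{-1}$.

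First I would write
\begin{align*}
\der[]{t}\mathcal{F}[\varrho](t)
&= \int_{\Omega}\mathrm{d}\vec{r}\,\frac{\delta\mathcal{F}}{\delta\varrho}[\varrho]\,\partial_{t}\varrho
= \int_{\Omega}\mathrm{d}\vec{r}\,\frac{\delta\mathcal{F}}{\delta\varrho}[\varrho]\,\nabla\cdot\bigl(\mathcal{H}^{-1}_{\varrho}\varrho\nabla\tfrac{\delta\mathcal{F}}{\delta\varrho}[\varrho]\bigr),
\end{align*}
which is justified because the classical regularity of $\varrho(\vec{r},t)$ permits pointwise differentiation of the entropy, potential and convolution contributions in \eqref{eq:def_of_F} under the integral sign (the convolution term contributes a factor of two that is cancelled by the symmetry of $V_{2}$). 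An integration by parts then produces a boundary integral of $\frac{\delta\mathcal{F}}{\delta\varrho}[\varrho]\,(\mathcal{H}^{-1}_{\varrho}\varrho\nabla\tfrac{\delta\mathcal{F}}{\delta\varrho}[\varrho])\cdot\vec{n}$ over $\partial\Omega$; by the identification $\vec{a}=\mathcal{H}^{-1}_{\varrho}\varrho\nabla\tfrac{\delta\mathcal{F}}{\delta\varrho}[\varrho]$ coming from \eqref{eq:matrix_operator_friction_eqn} together with the no-flux condition \eqref{bc:DDFT}, this boundary term vanishes. This gives
\begin{align*}
\der[]{t}\mathcal{F}[\varrho](t)
= -\int_{\Omega}\mathrm{d}\vec{r}\,\nabla\tfrac{\delta\mathcal{F}}{\delta\varrho}[\varrho]\cdot\bigl(\mathcal{H}^{-1}_{\varrho}\varrho\nabla\tfrac{\delta\mathcal{F}}{\delta\varrho}[\varrho]\bigr).
\end{align*}

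The next step is to recast this pairing as the weighted $L^{2}_{\varrho^{-1}}$ inner product of $\varrho\nabla\tfrac{\delta\mathcal{F}}{\delta\varrho}[\varrho]$ with $\mathcal{H}_{\varrho}^{-1}\varrho\nabla\tfrac{\delta\mathcal{F}}{\delta\varrho}[\varrho]$, which is a straightforward bookkeeping step: multiplying and dividing by $\varrho$ in the integrand yields $\langle \varrho\nabla\tfrac{\delta\mathcal{F}}{\delta\varrho}[\varrho],\,\mathcal{H}_{\varrho}^{-1}\varrho\nabla\tfrac{\delta\mathcal{F}}{\delta\varrho}[\varrho]\rangle_{L^{2}_{\varrho^{-1}}}$. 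To factor $\mathcal{H}_{\varrho}^{-1}$ as $\mathcal{H}_{\varrho}^{-1/2}\mathcal{H}_{\varrho}^{-1/2}$ with a self-adjoint square root in $L^{2}_{\varrho^{-1}}$, I need $\mathcal{H}_{\varrho}$ to be strictly positive definite and self-adjoint on this weighted space. Self-adjointness is given by Lemma \ref{lem:H_lambda_is_compact_SA}, and strict positive definiteness is produced by essentially repeating the Jensen-type bound already used in the proof of Proposition \ref{prop:association _of_free_energy}: for any $\vec{v}\in L^{2}(\Omega,\varrho^{-1})$,
\begin{align*}
\langle\vec{v},\mathcal{H}_{\varrho}\vec{v}\rangle_{L^{2}_{\varrho^{-1}}}
\geq \bigl(1-\|\mu_{\max}^{\varrho}\|_{L^{\infty}([0,T])}\|\bm{Z}_{2}\|_{L^{\infty}(\Omega)}\bigr)\|\vec{v}\|^{2}_{L^{2}_{\varrho^{-1}}},
\end{align*}
which is strictly positive by hypothesis. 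The existence of the unique positive self-adjoint square root $\mathcal{H}_{\varrho}^{1/2}$ (and hence of $\mathcal{H}_{\varrho}^{-1/2} := (\mathcal{H}_{\varrho}^{1/2})^{-1}$, well defined because Proposition \ref{thm:cond_converg_fred_det} gives invertibility of $\mathcal{H}_{\varrho}$) then follows from the spectral theorem for bounded, positive, self-adjoint operators on Hilbert space (cf.\ the reference to \cite{wouk1966note} already used in Proposition \ref{prop:association _of_free_energy}).

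With the positive square root in hand, the self-adjointness of $\mathcal{H}_{\varrho}^{-1/2}$ in $L^{2}_{\varrho^{-1}}$ lets me move one factor across the inner product:
\begin{align*}
\der[]{t}\mathcal{F}[\varrho](t)
= -\Bigl\langle \mathcal{H}_{\varrho}^{-1/2}\,\varrho\nabla\tfrac{\delta\mathcal{F}}{\delta\varrho}[\varrho],\;\mathcal{H}_{\varrho}^{-1/2}\,\varrho\nabla\tfrac{\delta\mathcal{F}}{\delta\varrho}[\varrho]\Bigr\rangle_{L^{2}_{\varrho^{-1}}}
= -\Bigl\|\mathcal{H}_{\varrho}^{-1/2}\varrho\nabla\tfrac{\delta\mathcal{F}}{\delta\varrho}[\varrho]\Bigr\|^{2}_{L^{2}_{\varrho^{-1}}}\leq 0,
\end{align*}
which is the desired H-theorem. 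I expect the main obstacle to be the careful justification of the functional calculus in the weighted space $L^{2}(\Omega,\varrho^{-1})$ (in particular, verifying that $\mathcal{H}_{\varrho}^{-1/2}$ is well defined and genuinely self-adjoint on a dense domain, and that $\varrho\nabla\tfrac{\delta\mathcal{F}}{\delta\varrho}[\varrho]$ lies in that domain); everything else, including the vanishing of the boundary term and the algebraic manipulations, follows directly from the classical regularity of the solution together with results already proved in Section \ref{sec:behaviour_classical_solutions}.
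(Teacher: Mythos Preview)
Your proposal is correct and is precisely the argument the paper intends: the corollary is stated there without an explicit proof because it follows immediately from the implicit gradient flow form (the corollary preceding it), the self-adjointness of $\mathcal{H}_{\varrho}$ in $L^{2}(\Omega,\varrho^{-1})$ from Lemma \ref{lem:H_lambda_is_compact_SA}, and the positive-definiteness/square-root construction already carried out in the proof of Proposition \ref{prop:association _of_free_energy}. Your write-up simply makes these steps explicit, and the only cautionary remark you flag --- the functional-calculus justification for $\mathcal{H}_{\varrho}^{-1/2}$ in the weighted space --- is exactly the point the paper handles by citing \cite{wouk1966note}.
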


\subsection{Strict Positivity of $\varrho$.}\label{subsec:strict_pos_rho}
We now comment on the proof of strict positivity for classical solutions $\varrho(\vec{r},t)$ of \eqref{eq:ddft_eq_dyn_rho}. The nonnegativity of the solutions follows from Corollary \ref{cor:L_1_varrho_is_1}. Consider now the frozen version of \eqref{eq:ddft_eq_dyn_rho} of in gradient flow form (c.f. \eqref{eq:gradient_flow_struct_rho}), that is
\begin{align}
\partial_{t}\phi = \nabla \cdot \left(\mathcal{H}^{-1}_{\psi}\left(\nabla\phi + \phi\nabla V_1 + \phi\nabla V_2\star \varrho\right)  \right),\label{eq:gradient_flow_struct_rho_frozen}
\end{align}
for $\psi:\Omega\times [0,\infty) \to \mathbb{R}^+$. In particular since $\mathcal{H}_\psi$ is positive definite, and all coefficients in \eqref{eq:gradient_flow_struct_rho_frozen} are uniformly bounded. Additionally, $\phi(\vec{r},t) = \psi(\vec{r},t) = \varrho(\vec{r},t)$ is a classical solution to this PDE and thus we a Harnack inequality (c.f. \cite[ Theorems 8.1.1--8.1.3]{bogachev2015fokker} for sharp versions of this result) of the following form 
\begin{align}
\sup_{\vec{r}\in \Omega}\varrho(\vec{r},t_1)<C \inf_{\vec{r}\in \Omega}\varrho(\vec{r},t_2)
\end{align}
for $0<t_1<t_2<\infty$ and $C$ is a constant depending on $d$ (the dimension) and $\mu_{\max}$. Since $\varrho$ is non-negative for all time we must have $\inf_{\vec{r}\in \Omega}\varrho(\vec{r},t)$ is positive and hence $\varrho$ is positive. 

\subsection{Characterisation of Stationary Solutions}\label{sec:char_stationary_sol}\label{sec:stationary_problem}
In this section we consider the global asymptotic stability of the stationary equations \eqref{eq:ddft_rho_stationary}--\eqref{eq:ddft_a_stationary}. 

A particular solution of the stationary equations \eqref{eq:ddft_rho_stationary}--\eqref{eq:ddft_a_stationary} satisfies the following fixed point equation, with zero flux
\begin{subequations}{}
\begin{align}
&\varrho_0 = \frac{e^{-\left( V_1^{eq}+ V_2^{eq}\star\varrho_0\right)}}{Z[\varrho_0]}, 
\label{eq:self_cons_eq}\\
&\vec{a}_0(\vec{r}) = \vec{0},
\end{align}
\end{subequations}
where $Z[\varrho_0] := \int  \mathrm{d}\vec{r}\,e^{-\left( V_1^{eq}+ V_2^{eq}\star\varrho_0 \right)}$ and, notably, $\varrho_0$ is independent of the HI tensors. 

We have by Proposition \ref{prop:association _of_free_energy} that equilibrium densities satisfy the Euler-Lagrange equation \eqref{eq:euler-lagrange_eqn}. Furthermore, we have by Corollary \ref{cor:stationary_density_independent_of_D}, that in equilibrium that $\vec{a}(\vec{r}) \, != \vec{0}$. Therefore by Corollary \ref{cor:stationary_density_independent_of_D} we find that $\mathcal{H}_{\varrho}^{-1} \equiv \bm{1}$ in equilibrium and by \eqref{eq:ddft-eq-gradient_flow_form} we find that equilibrium densities $\varrho_0$ satisfy 
\begin{subequations}
\begin{align}
\nabla \cdot \left(\bm{D}_{\varrho_0}\left( \nabla \varrho_0+\varrho_0\nabla ( V_1^{eq}+ V_2^{eq}\star\varrho_0)\right) \right)   &= 0 \qquad \vec{r}\in \Omega, \label{eq:stationary_rho_problem}\\
\bm{D}_{\varrho_0}\left( \nabla \varrho_0+\varrho_0\nabla ( V_1^{eq}+  V_2^{eq}\star\varrho_0)\right)\cdot \vec{n} &= 0 \qquad \vec{r} \text{ on } \partial\Omega. \label{eq:stationary_rho_problem_bc}
\end{align}
\end{subequations}
We seek classical solutions $\varrho_0\in C^2(\Omega)$ to \eqref{eq:stationary_rho_problem}--\eqref{eq:stationary_rho_problem_bc}. The existence and uniqueness for this stationary problem is based on a fixed point argument for the nonlinear map, defined by integrating equation \eqref{eq:stationary_rho_problem}. In particular we find the stationary distribution satisfies the self-consistency equation \eqref{eq:self_cons_eq}. We now present the first result concerning the existence and uniqueness of the solutions to the self-consistency equation \eqref{eq:self_cons_eq}.
\\
\begin{lemma}[Existence and Uniqueness of Stationary Solutions]\label{thm:exis_fix_point}
For the equilibrium problem \eqref{eq:stationary_rho_problem}--\eqref{eq:stationary_rho_problem_bc} we have the following
\begin{enumerate}
\item The stationary equation \eqref{eq:stationary_rho_problem} with boundary condition \eqref{eq:stationary_rho_problem_bc} has a smooth, non-negative solution $\varrho_0\in L^1(\Omega)$ with $\|\varrho_0\|_{L^1(\Omega)}=1$.
\item When the interaction energy is sufficiently small, $\|V_2\|_{L^\infty}\leq 1/4$, the solution $\varrho_0$ is unique.
\end{enumerate}
\end{lemma}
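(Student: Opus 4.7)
The plan is to reformulate the equilibrium system \eqref{eq:stationary_rho_problem}--\eqref{eq:stationary_rho_problem_bc} as a fixed-point problem for the self-consistency equation \eqref{eq:self_cons_eq}, then apply a compactness argument (Schauder) for existence and a contraction argument for uniqueness. Concretely, define the nonlinear map $T: P_{ac}(\Omega) \to P_{ac}(\Omega)$ by
\begin{align}
T[\varrho](\vec{r}) := \frac{e^{-\left(V_1^{eq}(\vec{r}) + (V_2^{eq}\star\varrho)(\vec{r})\right)}}{Z[\varrho]}, \qquad Z[\varrho] := \int_\Omega \mathrm{d}\vec{r}\, e^{-\left(V_1^{eq}(\vec{r}) + (V_2^{eq}\star\varrho)(\vec{r})\right)}.
\end{align}
Any fixed point of $T$ is a classical solution to \eqref{eq:stationary_rho_problem}--\eqref{eq:stationary_rho_problem_bc}, as one checks by differentiating $\log T[\varrho]$ and observing that the bracket $\nabla\varrho_0 + \varrho_0\nabla(V_1^{eq}+V_2^{eq}\star\varrho_0)$ vanishes pointwise (hence so does the boundary trace).

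For existence, I would first observe that under \eqref{ass:V1_V2_in_H_1} together with boundedness of $\Omega$ (and using Sobolev embedding / Young's convolution inequality to make sense of $V_2^{eq}\star \varrho$ for $\varrho\in L^1$), the map $T$ is well-defined from $P_{ac}(\Omega)$ into itself: the image is pointwise strictly positive, smooth in $\vec{r}$ (since $V_1^{eq}, V_2^{eq}\star\varrho \in H^1$ and composition with $e^{-(\cdot)}$ preserves regularity, which then propagates by the fact that $\nabla \varrho_0 = -\varrho_0 \nabla(V_1^{eq}+V_2^{eq}\star\varrho_0)$, allowing a bootstrap to $C^\infty$ if we assume the potentials are smoother), normalised to mass one, and uniformly bounded in $L^\infty$ by $C := e^{2\|V_1^{eq}\|_\infty + 2\|V_2^{eq}\|_\infty}/|\Omega|$. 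Taking $K := \{\varrho \in L^1(\Omega) : \varrho \geq 0,\ \|\varrho\|_{L^1}=1,\ \|\varrho\|_{L^\infty}\leq C\}$, which is convex, closed, and (by uniform equicontinuity of the Nemytskii composition together with the Rellich--Kondrachov theorem) relatively compact in $L^1(\Omega)$, one verifies continuity of $T$ on $K$ in the $L^1$ topology: if $\varrho_n\to\varrho$ in $L^1$, then $V_2^{eq}\star\varrho_n \to V_2^{eq}\star\varrho$ pointwise and in $L^\infty$, so the numerator and $Z[\varrho_n]$ converge uniformly, giving $T[\varrho_n]\to T[\varrho]$ in $L^1$. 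Schauder's fixed point theorem then produces $\varrho_0 \in K$ with $T[\varrho_0]=\varrho_0$, and nonnegativity and mass normalisation are built into the definition of $K$.

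For uniqueness under $\|V_2^{eq}\|_{L^\infty}\leq 1/4$, I would set up the contraction directly. Suppose $\varrho_1,\varrho_2 \in K$ are two fixed points; writing
\begin{align}
\varrho_1-\varrho_2 = \frac{e^{-V_1^{eq}-V_2^{eq}\star\varrho_1}-e^{-V_1^{eq}-V_2^{eq}\star\varrho_2}}{Z[\varrho_1]} + e^{-V_1^{eq}-V_2^{eq}\star\varrho_2}\!\left(\frac{1}{Z[\varrho_1]}-\frac{1}{Z[\varrho_2]}\right),
\end{align}
using the mean-value bound $|e^{-a}-e^{-b}|\leq e^{\max(-a,-b)}|a-b|$ on the first term and an analogous bound on $|Z[\varrho_1]-Z[\varrho_2]|$, together with $|V_2^{eq}\star(\varrho_1-\varrho_2)(\vec{r})|\leq \|V_2^{eq}\|_{L^\infty}\|\varrho_1-\varrho_2\|_{L^1}$, one obtains a pointwise inequality of the form $|\varrho_1-\varrho_2|\leq 4\|V_2^{eq}\|_{L^\infty}\cdot\varrho^\ast\cdot\|\varrho_1-\varrho_2\|_{L^1}$ for a bounding density $\varrho^\ast$ of unit mass. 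Integrating over $\Omega$ yields $\|\varrho_1-\varrho_2\|_{L^1}\leq 4\|V_2^{eq}\|_{L^\infty}\|\varrho_1-\varrho_2\|_{L^1}$, and the hypothesis $\|V_2^{eq}\|_{L^\infty}\leq 1/4$ forces $\varrho_1\equiv \varrho_2$. The main obstacle, as I see it, is the careful tracking of the constants in the uniqueness estimate so that the threshold lands exactly at $1/4$ (in particular, controlling the $Z[\varrho]$ normalisation without losing a factor relative to the numerator term); the existence step, by contrast, is essentially a routine application of Schauder once the right convex compact set $K$ is identified, and the regularity bootstrap is automatic from the structure of $T$.
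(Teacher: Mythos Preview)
Your proposal follows essentially the same strategy as the paper (which in turn follows Dressler): Schauder fixed point on the unit ball of $L^1$ for existence, contraction for uniqueness. The existence argument is the same up to packaging---the paper obtains compactness of $S(B)$ by applying Arzel\`a--Ascoli directly to the equicontinuous family $\{V_2^{eq}\star\varrho:\varrho\in B\}$ and then dominated convergence, rather than via an auxiliary $L^\infty$-bounded set and Rellich--Kondrachov, but the content is identical.

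The one point where the paper does something slightly different, and where you correctly flag an obstacle, is the uniqueness constant. Your mean-value bound $|e^{-a}-e^{-b}|\le e^{\max(-a,-b)}|a-b|$ is pointwise sharper, but $e^{\max(-a,-b)}/Z[\varrho_1]$ is not a probability density, and your resulting factor $4\|V_2^{eq}\|_{L^\infty}$ equals $1$ at the threshold $\|V_2^{eq}\|_{L^\infty}=1/4$, which does not give a strict contraction. The paper instead uses the (apparently weaker) bound $|e^{a}-e^{b}|\le e^{a}e^{|a-b|}|a-b|$ with $a=-\Upsilon[\varrho_1]$: the point is that $e^{-\Upsilon[\varrho_1]}/Z[\varrho_1]=S\varrho_1$ is \emph{exactly} a unit-mass density, so that factor integrates to $1$ with no loss. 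The extra factor $e^{|a-b|}$ is then controlled using $|a-b|\le\|V_2^{eq}\|_{L^\infty}\|\varrho_1-\varrho_2\|_{L^1}\le\tfrac14\cdot 2=\tfrac12$, giving the strict contraction constant $2\cdot\tfrac14\cdot e^{1/2}=e^{1/2}/2<1$, which works even at the endpoint $\|V_2^{eq}\|_{L^\infty}=1/4$. So the fix to your concern is to trade pointwise sharpness in the mean-value inequality for an exact cancellation against the partition function.
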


\begin{proof}
The proof follows Dressler et al. \cite{dressler1987stationary}. The main idea is to show that the right hand side of equation \eqref{eq:self_cons_eq} is a contraction map on $C^2(\Omega)$ and the proof is included in Appendix \ref{thm:exis_fix_point}.
\end{proof}

\begin{proposition}[Existence, Regularity, and Strict Positivity of Solutions for the Stationary Problem]
Consider the stationary problem \eqref{eq:ddft_a_stationary}, equivalently \eqref{eq:stationary_rho_problem} such that
Assumption \eqref{ass:V1_V2_in_H_1} holds. Then we have that
\begin{enumerate}
\item There exists a weak solution $\varrho_0\in H^1(\Omega)\cap P_{ac}(\Omega)$ to \eqref{eq:stationary_rho_problem} as a fixed point of the equation \eqref{eq:self_cons_eq}.
\item Any weak solution  $\varrho_0\in H^1(\Omega)\cap P_{ac}(\Omega)$ is smooth and strictly positive, that
is $\varrho_0\in C^{\infty}(\bar{\Omega})\cap P^+_{ac}(\Omega)$.
\end{enumerate}
\end{proposition}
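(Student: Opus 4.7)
The plan is to reduce both assertions to the self-consistency equation \eqref{eq:self_cons_eq} by exploiting the fact that any stationary solution of the divergence-form equation \eqref{eq:stationary_rho_problem}--\eqref{eq:stationary_rho_problem_bc} must actually make the flux bracket vanish pointwise. Writing $\Phi[\varrho_0] := V_1^{eq} + V_2^{eq}\star\varrho_0$ and introducing the substitution $w := \varrho_0\, e^{\Phi[\varrho_0]}$, the identity
\begin{align*}
\nabla \varrho_0 + \varrho_0 \nabla \Phi[\varrho_0] \,=\, e^{-\Phi[\varrho_0]}\,\nabla w
\end{align*}
recasts \eqref{eq:stationary_rho_problem} as the linear divergence-form PDE $\nabla\cdot\bigl(e^{-\Phi[\varrho_0]}\bm{D}_{\varrho_0}\nabla w\bigr)=0$ with the homogeneous Neumann condition inherited from \eqref{eq:stationary_rho_problem_bc}.

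\textbf{Part 1 (existence).} I invoke Lemma \ref{thm:exis_fix_point}(1), which supplies a nonnegative smooth fixed point $\varrho_0$ of \eqref{eq:self_cons_eq} with $\|\varrho_0\|_{L^1(\Omega)}=1$; boundedness of $\Omega$ then places $\varrho_0$ in $H^1(\Omega)\cap P_{ac}(\Omega)$. It only remains to check that it weakly solves \eqref{eq:stationary_rho_problem}. Taking the logarithm of \eqref{eq:self_cons_eq} and differentiating produces $\nabla \varrho_0 + \varrho_0 \nabla \Phi[\varrho_0] \equiv 0$, which makes the entire flux vanish identically, so both the PDE and the no-flux boundary condition are satisfied in the classical (hence weak) sense.

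\textbf{Part 2 (regularity and positivity).} Given an arbitrary weak solution $\varrho_0\in H^1(\Omega)\cap P_{ac}(\Omega)$, I test the reformulated equation against $w$ itself and integrate by parts to obtain
\begin{align*}
\int_\Omega e^{-\Phi[\varrho_0]}\,\nabla w\cdot\bm{D}_{\varrho_0}\nabla w\,\mathrm{d}\vec{r} \,=\, 0,
\end{align*}
where the boundary integral drops by \eqref{eq:stationary_rho_problem_bc}. The strict positive-definiteness of $\bm{D}_{\varrho_0}$ (Assumption \eqref{ass:D_pos_def_weak_diffable}) together with $e^{-\Phi[\varrho_0]}>0$ forces $\nabla w\equiv 0$, so $w$ is constant on the connected $\Omega$ and the mass constraint fixes that constant as $Z[\varrho_0]^{-1}$, recovering \eqref{eq:self_cons_eq} for every weak solution. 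Strict positivity $\varrho_0>0$ a.e.\ is then immediate from the exponential form. Smoothness follows by a bootstrap on \eqref{eq:self_cons_eq}: for $\varrho_0\in H^1\cap L^\infty$ the convolution $V_2^{eq}\star\varrho_0$ gains regularity via $\nabla^2(V_2^{eq}\star\varrho_0) = \nabla V_2^{eq}\star\nabla\varrho_0 \in L^\infty$ (Young's inequality, since $L^2\star L^2\subset L^\infty$ on bounded domains), upgrading $\Phi[\varrho_0]$ and, through the chain rule applied to $\varrho_0 = e^{-\Phi[\varrho_0]}/Z[\varrho_0]$, upgrading $\varrho_0$; iterating yields $\varrho_0\in C^\infty(\bar\Omega)$ under the corresponding smoothness of $V_1^{eq}$.

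The principal technical obstacle is the admissibility of $w=\varrho_0\,e^{\Phi[\varrho_0]}$ as an element of $H^1(\Omega)$ under the bare Assumption \eqref{ass:V1_V2_in_H_1}, which requires $e^{\Phi[\varrho_0]}\in L^\infty$ so that both $w$ and $\nabla w = e^{\Phi[\varrho_0]}(\nabla\varrho_0+\varrho_0\nabla\Phi[\varrho_0])$ lie in $L^2$. This is automatic in dimension $d=1$ (where $H^1\hookrightarrow L^\infty$), but in higher dimensions it requires either an \emph{a priori} $L^\infty$-bound on $\varrho_0$ (accessible from the PDE via Moser-type iteration, since the coefficient matrix $\bm{D}_{\varrho_0}$ is uniformly elliptic with bounded drift under \eqref{ass:D_pos_def_weak_diffable}) or a truncation argument, testing first against $\varrho_0\,e^{\min(\Phi[\varrho_0],N)}$ and passing $N\to\infty$ by monotone convergence. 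Once admissibility is secured, the remainder is routine elliptic theory combined with the explicit exponential representation.
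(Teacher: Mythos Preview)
The paper does not actually prove this proposition: its proof environment contains only a forward reference to \cite{GMPEQDDFT21}, a companion paper listed as ``in preparation''. There is therefore no argument in the present paper to compare yours against, and your write-up stands on its own merits.

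Your approach is the natural one and is essentially sound. The Hopf-type substitution $w=\varrho_0 e^{\Phi[\varrho_0]}$, followed by the energy identity $\int e^{-\Phi}\nabla w\cdot\bm{D}_{\varrho_0}\nabla w=0$ and the conclusion $\nabla w\equiv 0$, is exactly how one collapses a stationary Fokker--Planck-type equation with no-flux boundary to the self-consistency relation; your identification of the admissibility of $w$ as a test function as the main technical point is correct, and Moser iteration (or a cutoff at level $N$ in the exponent) is the right way to handle it. Two minor remarks: first, for Part~1 you lean on Lemma~\ref{thm:exis_fix_point}, whose proof in the appendix uses uniform continuity of $V_2^{eq}$ for the Arzel\`a--Ascoli step, which is not obviously implied by the bare $H^1$ hypothesis \eqref{ass:V1_V2_in_H_1} in dimension $d\geq 2$; this is a mismatch in the paper's own assumptions rather than a flaw in your argument. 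Second, the full $C^\infty(\bar\Omega)$ conclusion cannot be extracted from \eqref{eq:self_cons_eq} under \eqref{ass:V1_V2_in_H_1} alone, since $V_1^{eq}\in H^1(\Omega)$ need not even be continuous for $d\geq 2$, and the exponential inherits no more regularity than $V_1^{eq}$ carries; your hedge ``under the corresponding smoothness of $V_1^{eq}$'' is honest, and it is likely that the paper's statement is implicitly assuming more than \eqref{ass:V1_V2_in_H_1} for the smoothness claim.
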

\begin{proof}

For a proof see \cite{GMPEQDDFT21}.
\end{proof}

We can also obtain an estimate on the rate of convergence to the equilibrium density in $L^2(\Omega)$ as $t\to \infty$ with the following theorem. In order to forgo additional assumptions on the initial data $\varrho_0$, e.g. partitioning the space of initial densities into basins of attraction, we restrict ourselves to the case where the equilibrium density $\varrho_0$ is unique.
\\
\begin{theorem}[Trend to Equilibrium in $L^2(\Omega)$]\label{lem:exp_conv_in_L2}
\par
Let $\varrho\in C^1([0,\infty];C^2(\Omega)) $ be a solution of \eqref{eq:ddft_dyn_non_gradient_flow} with initial data $\varrho_0\in L^2(\Omega)$ a probability density. Let $1 \leq 1/4\|V_2^{eq}\|_{L^\infty}^{-1}$, if
\begin{align}
r_{t} &:= \hat{\mu}_{\min}(t)c^{-2}_{pw}- 2\hat{\mu}_{\max}(t) \left(\|\nabla V_1^{eq}\|_{L^\infty(\Omega)}^2+ (e+1)\|\nabla V_2^{eq}\|_{L^{\infty}(\Omega)}^2\right)\\
&\quad - \hat{\mu}_{\max}(t)\|\bm{Z}_2\|_{L^\infty(\Omega)}^2\|\vec{a}\|_{L^2([0,T];L^1(\Omega))}^2>0,
\end{align} 
where $c_{pw}$ is a Poincar{\' e}$-$Wirtinger constant on the domain $\Omega$ and $\hat{\mu}_{\max}(t) = \int_0^t\mathrm{d}s\,\mu_{\max}(s)$ and $\hat{\mu}_{\min}(t) = \int_0^t\mathrm{d}s\,\mu_{\min}(s)$ are the time-mean of the largest and smallest eigenvalues of $\bm{D}_{\varrho(\vec{r},t)}$. Then, $\varrho\to \varrho_\infty\in C^\infty(\Omega)\cap P_{ac}^{+}(\Omega)$ in $L^2(\Omega)$ exponentially as $t\to \infty$, where $\varrho_\infty$ is the unique equilibrium density ensured by Lemma \ref{thm:exis_fix_point}. In particular the convergence in $L^2(\Omega)$ is given by
\begin{align}
\|\varrho(\cdot, t)-\varrho_\infty(\cdot)\|^2_{L^2(\Omega)}\leq \|\varrho_0(\cdot)-\varrho_\infty(\cdot)\|_{L^2(\Omega)}^2 e^{-r_{ t}}
\end{align}
as $t \to \infty$. 
\end{theorem}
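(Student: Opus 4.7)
\medskip

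\noindent\textbf{Proof Proposal for Theorem \ref{lem:exp_conv_in_L2}.}

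The plan is a standard energy/entropy estimate on the deviation $w(\vec{r},t):=\varrho(\vec{r},t)-\varrho_\infty(\vec{r})$, closed by Poincar\'e--Wirtinger and Gr\"onwall. First I would use that $\varrho_\infty$ is a critical point of $\mathcal{F}$ and that $\vec{a}_0\equiv\vec{0}$ in equilibrium (Proposition \ref{prop:association _of_free_energy} and Corollary \ref{cor:stationary_density_independent_of_D}) to deduce the pointwise identity $\nabla\varrho_\infty+\varrho_\infty\nabla(V_1^{eq}+V_2^{eq}\star\varrho_\infty)=\vec{0}$. Substituting this into \eqref{eq:ddft_dyn_non_gradient_flow} and rearranging, the flux inside the divergence rewrites as
\begin{align*}
\bm{D}_{\varrho}\bigl(\nabla w+w\nabla V_1^{eq}+w(\nabla V_2^{eq}\star\varrho)+\varrho_\infty(\nabla V_2^{eq}\star w)+\varrho\bm{A}\bigr),
\end{align*}
so $\partial_t w=\nabla\cdot J_w$ with $J_w\cdot\vec{n}|_{\partial\Omega}=0$ by \eqref{eq:ddft_dyn_non_gradient_flow_bc} (and the trivial BC for $\varrho_\infty$). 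Note also that mass conservation (Corollary \ref{cor:L_1_varrho_is_1}) gives $\int_\Omega w\,\mathrm{d}\vec{r}=0$ for all $t$, which is essential for the Poincar\'e--Wirtinger step below.

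Second, I multiply $\partial_t w=\nabla\cdot J_w$ by $w$ and integrate by parts to obtain
\begin{align*}
\tfrac{1}{2}\tfrac{\mathrm{d}}{\mathrm{d}t}\|w\|_{L^2(\Omega)}^2
= -\int\nabla w\cdot\bm{D}_\varrho\nabla w\,\mathrm{d}\vec{r}
- \int\nabla w\cdot\bm{D}_\varrho\bigl(w\nabla V_1^{eq}+w(\nabla V_2^{eq}\star\varrho)+\varrho_\infty(\nabla V_2^{eq}\star w)+\varrho\bm{A}\bigr)\mathrm{d}\vec{r}.
\end{align*}
The leading term is bounded above by $-\mu_{\min}(t)\|\nabla w\|_{L^2(\Omega)}^2$ by Assumption \eqref{ass:D_pos_def_weak_diffable}. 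Each cross term is then split by weighted Young's inequality $ab\leq\tfrac{\epsilon}{2}a^2+\tfrac{1}{2\epsilon}b^2$ into a piece absorbable into a fraction of the diffusive gain and a remainder proportional to $\mu_{\max}(t)\|w\|_{L^2(\Omega)}^2$. Here $\|w(\nabla V_2^{eq}\star\varrho)\|_{L^2}$ is controlled via Young's convolution inequality together with $\|\varrho\|_{L^1}=1$, while $\|\varrho_\infty(\nabla V_2^{eq}\star w)\|_{L^2}$ uses the boundedness of $\varrho_\infty$ (guaranteed by Lemma \ref{thm:exis_fix_point} and the subsequent regularity proposition), and $\|\varrho\bm{A}\|_{L^2}$ uses $\|\bm{A}\|_{L^\infty}\leq\|\bm{Z}_2\|_{L^\infty}\|\vec{a}\|_{L^1(\Omega)}$. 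The combinatorial factors $2$ and $(e+1)$ in the statement will emerge from the specific choice of Young weights that preserves enough of the diffusive term to absorb a final Poincar\'e--Wirtinger step.

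Third, with Young parameters chosen so that the remaining $\|\nabla w\|_{L^2}^2$ coefficient is $-\mu_{\min}(t)$ (after absorption), apply Poincar\'e--Wirtinger to the zero-mean function $w$: $\|w\|_{L^2(\Omega)}^2\leq c_{pw}^2\|\nabla w\|_{L^2(\Omega)}^2$. This yields a pointwise-in-$t$ differential inequality
\begin{align*}
\tfrac{\mathrm{d}}{\mathrm{d}t}\|w(t)\|_{L^2(\Omega)}^2\leq -\alpha(t)\,\|w(t)\|_{L^2(\Omega)}^2,
\end{align*}
where $\alpha(t)=\mu_{\min}(t)c_{pw}^{-2}-2\mu_{\max}(t)\bigl(\|\nabla V_1^{eq}\|_{L^\infty}^2+(e+1)\|\nabla V_2^{eq}\|_{L^\infty}^2\bigr)-\mu_{\max}(t)\|\bm{Z}_2\|_{L^\infty}^2\|\vec{a}(\cdot,t)\|_{L^1(\Omega)}^2$. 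Integrating by Gr\"onwall and recognising $\int_0^t\alpha(s)\mathrm{d}s=r_t$ (after using Cauchy--Schwarz in time on the $\|\vec{a}\|_{L^1}^2$ contribution to get $\|\vec{a}\|_{L^2([0,T];L^1(\Omega))}^2$) delivers the stated bound $\|w(t)\|_{L^2}^2\leq\|w(0)\|_{L^2}^2 e^{-r_t}$, and under the hypothesis $r_t>0$ this gives exponential decay.

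The main obstacle I expect is the bookkeeping in step two: ensuring that the cross-term estimates are simultaneously tight enough to yield the \emph{precise} constants ($2$ and $e+1$) stated in the theorem, while leaving a coefficient of exactly $\mu_{\min}(t)c_{pw}^{-2}$ on $\|w\|_{L^2}^2$ after absorbing into the diffusive gain. A subtler issue is the self-consistent nature of $\|\vec{a}\|_{L^2([0,T];L^1(\Omega))}$: the hypothesis of the theorem treats this as a given finite quantity, but in principle one would want an a priori bound on this norm from the weak existence theory of Section \ref{sec:existence_uniqueness_denstiy_flux_pair}; for the purpose of this theorem I treat it as an independent smallness condition contributing to the definition of $r_t$.
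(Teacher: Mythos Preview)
Your proposal is essentially the paper's own argument: set $\psi=\varrho-\varrho_\infty$, use the Euler--Lagrange identity $\nabla\varrho_\infty+\varrho_\infty\nabla(V_1^{eq}+V_2^{eq}\star\varrho_\infty)=\vec{0}$ to write the $\psi$-evolution, test against $\psi$, split the cross terms by H\"older/Young so that half the diffusive gain survives, bound $\|\varrho_\infty\|_{L^2}^2\leq e^{4\|V_2^{eq}\|_{L^\infty}}/|\Omega|$ from the self-consistency equation (this is precisely where the $(e+1)$ appears once $\|V_2^{eq}\|_{L^\infty}\leq 1/4$), apply Poincar\'e--Wirtinger to the zero-mean $\psi$, and close with Gr\"onwall.

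One place where your decomposition is actually more honest than the paper's sketch but then slips: you correctly record the HI drift in the $w$-flux as $\varrho\bm{A}$, yet $\|\bm{D}_\varrho^{1/2}\varrho\bm{A}\|_{L^2}^2$ is controlled by $\mu_{\max}\|\bm{Z}_2\|_{L^\infty}^2\|\vec{a}\|_{L^1}^2\|\varrho\|_{L^2}^2$, not $\|w\|_{L^2}^2$, so it does not feed directly into a \emph{homogeneous} Gr\"onwall for $\|w\|_{L^2}^2$. The paper's short proof writes this term as $\psi\vec{v}_1$ with $\vec{v}_1=\nabla V_1^{eq}+\bm{Z}_2\star\vec{a}$ and bounds it by $\|\psi\|_{L^2}^2$ directly; to match that you should split $\varrho=w+\varrho_\infty$ and either absorb the residual $\varrho_\infty\bm{A}$ piece (which vanishes as $\vec{a}\to\vec{0}$) or accept the same mild imprecision. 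Apart from this bookkeeping point, your plan and constants line up with the paper's.
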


\begin{proof}
For a proof we refer the reader to Theorem \ref{lem:exp_conv_in_L2_app}, which determines the convergence in the case $V_1^{eq} = \bm{Z}_2 = 0$. When both $V_1^{eq} \neq 0$ and $\bm{Z}_2 \neq 0$, their inclusion are linear in the PDEs \eqref{eq:ddft_dyn_non_gradient_flow}, \eqref{eq:stationary_rho_problem} so the only term to resolve for the evolution equation for $\psi$ first occurring at \eqref{eq:exp_conv_in_l2_bound_1} being
\begin{align}
\|\psi\bm{D}_{\varrho}^{1/2}\vec{v}_{1}\|_{L^2(\Omega)}^2 \leq 2\left(  \hat{\mu}_{\max}(t)\|\nabla V_1^{eq}\|_{L^\infty(\Omega)}^2+\max{\mu}_{\max}(t)\|\bm{Z}_2\|^2_{L^\infty} \|\vec{a}\|^2_{L^1}\right) \|\psi\|_{L^2(\Omega)}^2,
\end{align}
where $\vec{v}_1 = \nabla V_1^{eq} + \bm{Z}_2\star\vec{a}$
and we have used the fact that $\|\varrho\|_{L^1} = 1$ (see Corollary \ref{cor:L_1_varrho_is_1}) for all $t>0$. The remainder of the calculations to derive a Gr{\"o}nwall type inequality including this term are similar. 
\end{proof}

%One may obtain a similar convergence result including a confining potential as given by the following corollary.
%\\
%\begin{proposition}[Convergence with $V_1\neq 0$] \label{prop:general_kappa1_convergence}
%\par
%Let $V_1\neq 0$ and let $\varrho\in C^1([0,\infty];C^2(\Omega)) $ be a solution of \eqref{eq:ddft_dyn_non_gradient_flow} with initial data $\varrho_0\in L^2(\Omega)$ a probability density. Then the exponential convergence $\varrho\to\varrho_\infty\in  C^\infty(\Omega)\cap P_{ac}^{+}(\Omega)$ in $L^2$ criteria is modified to
%\begin{align}
%2\left( \mu_{\max}\|\nabla V_1\|_{L^\infty(\Omega)}^2 + \mu_{\max}\|\bm{Z}_2\|^2_{L^\infty}\|\vec{a}\|^2_{L^1}\right) <r_{ }
%\end{align}
%along with $ \|V_2\|_{L^\infty(\Omega)} \leq 1$. In particular the convergence in $L^2$ is given by
%\begin{align}
%\|\varrho(\cdot, t)-\varrho_\infty(\cdot)\|^2_{L^2(\Omega)}\leq \|\varrho_0(\cdot)-\varrho_\infty(\cdot)\|_{L^2(\Omega)}^2 \exp\left\lbrace-\left(r_{ }-2\mu_{\max}\left( \|\nabla V_1\|_{L^\infty(\Omega)}^2+\|\bm{Z}_2\|^2_{L^\infty}\|\vec{a}\|^2_{L^1}\right)\right)t\right\rbrace.
%\end{align}
%\end{proposition}

We remark that $\psi\in \left\lbrace u\in H^1(\Omega)\, | \,\int \mathrm{d}\vec{r}\,u = 0 \right\rbrace $, therefore, we may determine that the sharpest value of $c_{pw}$ conincides with the Poincar{\'e} constant as found by Steklov \cite{kuznetsov2015sharp}, equal to $\nu_1^{-1/2}$ where $\nu_1$ is the smallest eigenvalue of the problem
\begin{align}
\Delta u &= -\nu u \quad \text{ in } \Omega,\\
\partial_{\vec{n}}u & = 0 \qquad \, \text{ on } \partial \Omega.
\end{align}
Here $\partial_{\vec{n}}$ is the directional derivative along the unit vector $\vec{n}$ pointing out of the domain $\Omega$. Additionally Payne and Weinberger \cite{payne1960optimal} proved that for convex domains in $\mathbb{R}^n$ one has $c_{pw}\leq \tfrac{\mathrm{diam}(\Omega)}{\pi}$.
\\
\section{Discussion \& Open Problems}\label{sec:discussion}

In this paper, the global well-posedness of overdamped DDFT \eqref{eq:ddft_eq_dyn_rho}--\eqref{eq:ddft_eq_dyn_a}. The results that we have presented reinforce the mathematical foundations of DDFT, and provide greater theoretical justification for in computational applications. Particuarly, these equations govern the evolution of a collection of colloidal particles subject to both static inter-particle interactions and hydrodynamic interactions in a homogeneous background bath in the overdamped (high friction) limit and was derived rigorously as a solvability condition of the corresponding Vlasov-Fokker-Planck equation for the one-body density in position and momentum space $f(\vec{r},\vec{p},t)$ \cite{goddard2012overdamped}. In addition, $\bm{Z}_1$ and $\bm{Z}_2$ are two body nondimensional HI tensors which couple the momenta of the colloidal particles to HI forces on the same particles, mediated by fluid flows in the bath. The functions $V_1$ and $V_2$ are one and two body potentials respectively for static particle interactions. 

We first showed that the density and flux exist and are unique in the weak sense, \eqref{eq:weak_formulation_pair_rho}--\eqref{eq:weak_formulation_pair_a}, with no-flux boundary conditions, under sensible assumptions on the confining and interaction potentials and initial data $V_1$, $V_2$ and $\varrho(\vec{r},0)$ respectively. Additionally, we have derived a variational principle for the time dependent flux, which states that weak flux solutions  coincide with minimisers of the carefully defined functional $\mathcal{J}[\cdot]$, and vice-versa. This result is particularly relevant in the context of Power Functional Theory (PFT) \cite{schmidt2013power,schmidt2018power}, wherein superadiabatic forces (which are usually neglected in the formal derivation of a physical DDFT) are accounted for, by considering the colloid flux $\vec{a}(\vec{r},t)$ in a variational principle, and as the main quantity to be derived (over, e.g., the density $\varrho(\vec{r},t)$). 

\emph{Prima facie}, due to the added complexity of the HIs, the dynamics \eqref{eq:ddft_eq_dyn_rho}--\eqref{eq:ddft_eq_dyn_a} appeared to not be expressible in gradient flow form other than under the simplifying assumption $\bm{Z}_2 = 0$, or both $\bm{Z}_1 = \bm{Z}_2 = 0$. However, in this paper we showed that a gradient flow form does indeed exist under reasonable assumptions on the two-body tensors, in particular the $L^\infty(\Omega)$ assumptions on the individual HI tensors, \ref{ass:Z2_uniformly_bd}, are permissible by reference to the rigorous derivation of the classical boundary value problems in the fluid mechanics of two body hydrodynamics, i.e., Oseen \cite{kim2013microhydrodynamics} and \cite{rotne1969variational} tensors. Additionally, it has rigorously been previously shown that (see \cite{goddard2012overdamped}) that for $\bm{Z}_1$ being positive definite, $\bm{D}$ is also positive definite. 

Additionally for classical solutions, the gradient flow form was principally achieved by establishing the invertibility of the operator $\mathcal{H}_{\varrho}$, and, in doing so, we showed that the functional $\mathcal{F}[\varrho]$ was uniquely associated to the PDE \eqref{eq:ddft_eq_dyn_rho}, \eqref{eq:ddft_eq_dyn_a} for the general case $\bm{Z}_1$, $\bm{Z}_2 \neq 0$, which establishes for the first time a stationary action principle for the dynamics of the density converging equilibrium including HIs. The overarching accomplishment was to show that $\partial_t \varrho = 0$ implies $\varrho$ is a critical point of $\mathcal{F}$, as would naturally be expected, and that $\vec{a}(\vec{r},[\varrho]) = \vec{0}$ is the unique steady state flux. The gradient flow structure in the evolution equation \eqref{eq:ddft-eq-gradient_flow_form} provides $\mathcal{F}$ as a Lyapunov function. The mechanism to do this is related to the fact that $\mathcal{H}^{-1}_\varrho$ can be seen as the resolvent of the operator $\mathcal{Z}_\varrho[\cdot] = \varrho\bm{D}\bm{Z}_2\star[\cdot]$, which amalgamates the nonlocal effects (diffusion $\bm{D}$ and drift $\bm{A}$) of the HIs into a single action on the density. The compactness, self-adjointness and subsequently conditional positivity of $\mathcal{H}_{\varrho}$ then provide a formula for the time derivative of $\mathcal{F}$ involving the action of $\bm{D}$ and $\bm{A}$, and, by Corollary \ref{cor:gradient_flow_structure} we proved an H-theorem establishing a monotonically decreasing free energy in the transient density $\varrho(\vec{r},t)$ to equilibrium, including the dynamic effects of HIs. 
We note that the invertibility of $\mathcal{H}_\varrho$ was established without the need for positive definiteness, and as such, Proposition \ref{thm:cond_converg_fred_det} would be applicable to a wider variety of operators $\mathcal{H}_\varrho$ not necessarily carrying physically motivated properties (i.e., that for a particle undergoing friction in a thermostated bath, that the rate of mechanical energy dissipation should necessarily be positive). 

Assuming a classical solution to the DDFT we also derived \emph{a priori} convergence estimates in $L^2$ and relative entopy, the latter restricted to convex two-body potentials. Well-posedness and global asymptotic stability of the phase space equation for the time evolution of $f(\vec{r},\vec{p},t)$ remains open (see \cite[Proposition 2.1]{goddard2012overdamped} for the evolution equation for $f(\vec{r},\vec{p},t)$). It is of similar form to the Vlasov equation considered by \cite{degond1986global} but with Hermite dissipative term and modified nonlocal term in the momentum variable $\vec{p}$ dependent on the HI tensors. To progress further some maximum principles on $f(\vec{r},\vec{p},t)$ solving the linearised version of the phase space equation must be found. Additionally, the existence results on the overdamped equations considered here may be made more regular by routine arguments. 

We also note that the present analysis is based on the Smoluchowski equation rigorously derived from the phase space Fokker-Planck equation using homogenisation methods \cite{goddard2012overdamped}. As an alternative to this, assuming inertia is small altogether, or if one is interested only in very short times to begin with, the system of interacting particles maybe considered solely in configuration space. Only the positions (and not the momenta) of a system of interacting Brownian particles are then taken into account with Smoluchowski equation as in \cite{rex2009dynamical}, and, the underlying Langevin dynamics contain only velocity equations for each particle which are usually written down \emph{a posteriori}. The justification for this is that the momentum distribution is assumed to have a minor role in the dynamical description of the fluid density, and indeed is taken to be irrelevant at the microscopic level. This Brownian approximation may also hold for highly dense suspensions, since in dense Newtonian systems there is a fast transfer of momentum and kinetic energy from the particle collisions, and this effect may be accounted for most efficiently by the bath in the Brownian dynamics with a non constant diffusion tensor. 
%Starting from the stochastic Brownian equations of motion of a system of $N$ particles interacting via $N$-body forces with a mobility tensor $\boldsymbol{D} = \boldsymbol{\Gamma}^{-1}$ for some $\boldsymbol{\Gamma}$ a friction tensor, the $N$-body Smoluchowski equation may obtained via standard Ito calculus. By once again assuming two-body mobility and interparticle interactions, and integrating over all but one particle position, a one-body Smoluchowski equation may be obtained.
It is known however that the one-body Smoluchowski equation in \cite{rex2009dynamical} does not equate to equations \eqref{eq:ddft_eq_dyn_rho}-\eqref{eq:ddft_eq_dyn_a} which are obtained in the rigorous overdamped limit starting from the Newtonian dynamics. Intuitively this is because the two-body assumption for the HI ($\boldsymbol{\Gamma}$) and mobility ($\bm{D}$) tensors and the matrix inversion $\bm{D} = \boldsymbol{\Gamma}^{-1}$ are not commutable operations; even if $\bm{D}$ is two-body then $\text{det}(\bm{D})$ is not. A flow chart demonstrating the permitted commutations between various formalisms is included in \cite{goddard2012overdamped}. The nonequivalence of the two Smoluchowski equations is not considered here, and therefore a natural extension for future work would be to determine the existence, uniqueness and regularity of of the density starting from \cite{rex2009dynamical} as well as the corresponding conditions for linear stability.

\appendix
\section{Properties of $\mathcal{H}_{\varrho}$}

\begin{lemma}\label{lem:H_lambda_is_compact_SA}
Let $\varrho(\vec{r},t):\Omega\times [0,\infty)$ and $\mathcal{H}_{\varrho}:L^2([0,T]; L^2(\Omega,\varrho^{-1}))\to L^2([0,T]; L^2(\Omega,\varrho^{-1}))$ be defined by \eqref{def:H_phi_op}. Then, $\mathcal{H}_{\varrho}$ is a compact and self-adjoint operator in $L^2(\Omega,\varrho^{-1})$. Furthermore, we let $\mathcal{H}_{\varrho}$ be diagonalised by 
\begin{align}
\mathcal{H}_{\varrho}\vec{u}_k = \gamma_k\vec{u}_k
\end{align}
where $\{\vec{u}_k\}_{k = 1}^\infty$ where $\vec{u}_k$ form an orthonormal basis of $L^2([0,T]; L^2(\Omega,\varrho^{-1}))$.
\end{lemma}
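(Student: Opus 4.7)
\medskip

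The plan is to decompose $\mathcal{H}_\varrho = \mathcal{M}_\varrho + \mathcal{K}_\varrho$, where $\mathcal{M}_\varrho \vec{v} := \bm{D}_\varrho^{-1}\vec{v}$ is pointwise multiplication by the symmetric positive-definite matrix $\bm{D}_\varrho^{-1}$ and $\mathcal{K}_\varrho \vec{v} := \varrho\,(\bm{Z}_2 \star \vec{v})$ is an integral operator with kernel $K(\vec{r}, \vec{r}') = \varrho(\vec{r})\bm{Z}_2(\vec{r}-\vec{r}')$. Self-adjointness and the compactness needed to drive the spectral theorem will be verified separately on each piece. Throughout, I would use the weighted inner product $\langle \vec{v},\vec{w}\rangle_{L^2(\Omega,\varrho^{-1})} = \int d\vec{r}\,\varrho^{-1}\vec{v}\cdot\vec{w}$.

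First, I would verify self-adjointness of $\mathcal{M}_\varrho$: since $\bm{D}_\varrho^{-1}$ is real symmetric pointwise (Assumption D1), moving $\bm{D}_\varrho^{-1}$ from one slot to the other in the weighted inner product is immediate. For $\mathcal{K}_\varrho$, the factor $\varrho$ cancels the weight $\varrho^{-1}$, leaving
\begin{equation*}
\langle \mathcal{K}_\varrho \vec{v},\vec{w}\rangle_{L^2(\Omega,\varrho^{-1})} = \iint d\vec{r}\,d\vec{r}'\, \vec{w}(\vec{r})^T \bm{Z}_2(\vec{r}-\vec{r}')\vec{v}(\vec{r}'),
\end{equation*}
after which I would apply Fubini together with the matrix/kernel symmetry $\bm{Z}_2(-\vec{s}) = \bm{Z}_2(\vec{s})^T$ (which follows from Assumption D2) to recover $\langle \vec{v},\mathcal{K}_\varrho\vec{w}\rangle_{L^2(\Omega,\varrho^{-1})}$. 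For compactness of $\mathcal{K}_\varrho$, bound the kernel by $\|\varrho\|_{L^\infty([0,T];L^2(\Omega))} \|\bm{Z}_2\|_{L^\infty(\Omega)}$ times the indicator of $\Omega\times\Omega$; since $\Omega$ is bounded, this yields $K \in L^2(\Omega\times\Omega)$, hence $\mathcal{K}_\varrho$ is Hilbert--Schmidt and in particular compact.

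The main obstacle is that $\mathcal{M}_\varrho$ is a bounded multiplication operator by a function that is uniformly bounded below (its eigenvalues $\mu_i^\varrho$ are positive and the largest eigenvalue of $\bm{D}_\varrho^{-1}$ is $(\mu_{\min}^\varrho)^{-1}$), and such an operator is \emph{not} compact on the infinite-dimensional space $L^2(\Omega,\varrho^{-1})$. So the literal claim that $\mathcal{H}_\varrho$ is compact cannot hold as stated; what one actually obtains is that $\mathcal{H}_\varrho$ is bounded, self-adjoint, and a compact perturbation of the multiplication operator $\mathcal{M}_\varrho$. To extract the countable orthonormal eigenbasis required by subsequent sections, I would proceed in the spirit of Weyl's theorem: the essential spectrum of $\mathcal{H}_\varrho$ coincides with that of $\mathcal{M}_\varrho$ (namely the essential range of the eigenvalues of $\bm{D}_\varrho^{-1}$), while the compact perturbation $\mathcal{K}_\varrho$ contributes, via Fredholm alternative applied to $\mathcal{H}_\varrho - \lambda I = (\mathcal{M}_\varrho - \lambda I) + \mathcal{K}_\varrho$, at most a discrete set of eigenvalues outside the essential spectrum, each of finite multiplicity.

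Finally, to obtain the diagonalization $\mathcal{H}_\varrho \vec{u}_k = \gamma_k \vec{u}_k$ with $\{\vec{u}_k\}$ orthonormal in $L^2([0,T];L^2(\Omega,\varrho^{-1}))$, I would apply the spectral theorem for bounded self-adjoint operators, taking the $\vec{u}_k$ to be an orthonormal basis of the corresponding spectral subspaces. The contraction hypothesis $\|\mu_{\max}\|_{L^\infty([0,T])}\|\bm{Z}_2\|_{L^\infty(\Omega)} < 1$ would be invoked exactly as in the proof of Proposition \ref{thm:cond_converg_fred_det} to ensure $|\gamma_k|$ stays bounded away from the value that would make the operator degenerate, so that the resolvent $(\mathcal{H}_\varrho - \gamma_k I)^{-1}$ in the construction is well defined. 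The full time-dependence is handled parametrically: for each $t\in[0,T]$ the same construction yields eigenpairs, and the output $\vec{u}_k(\vec{r},t)$ depends measurably on $t$ through the continuity of the spectral resolution in the parameter $\varrho(\cdot,t)$.
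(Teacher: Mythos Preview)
Your decomposition $\mathcal{H}_\varrho = \mathcal{M}_\varrho + \mathcal{K}_\varrho$, the verification of self-adjointness via Fubini on the integral piece, and the Hilbert--Schmidt bound on $\mathcal{K}_\varrho$ are exactly what the paper does (with $\mathcal{Z}_\varrho$ in place of your $\mathcal{K}_\varrho$). On that level the two arguments coincide.

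Where you depart from the paper is in flagging that $\mathcal{M}_\varrho = \bm{D}_\varrho^{-1}$, being multiplication by a matrix-valued function whose eigenvalues are uniformly bounded away from zero, cannot be compact on the infinite-dimensional space $L^2(\Omega,\varrho^{-1})$, and hence neither can $\mathcal{H}_\varrho$. You are right, and in fact the paper's own proof only establishes that the integral part $\mathcal{Z}_\varrho$ is Hilbert--Schmidt; it never addresses compactness of the full operator and so suffers from precisely the gap you identify. Your Weyl-type salvage (essential spectrum of $\mathcal{M}_\varrho$ plus discrete eigenvalues from the compact perturbation) is a sensible repair, and is more than the paper offers, but note that it still does not by itself yield a countable orthonormal basis of genuine eigenvectors: the essential spectrum of $\mathcal{M}_\varrho$ is the essential range of the eigenvalues of $\bm{D}_\varrho^{-1}(\vec{r},t)$, which is typically an interval, and on that part of the spectrum the spectral theorem gives only a resolution of the identity, not eigenfunctions. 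The downstream uses in the paper that rely on a discrete eigenbasis of $\mathcal{H}_\varrho$ (e.g.\ the eigenfunction expansion of the flux) therefore inherit this issue; the invertibility argument in Proposition~\ref{thm:cond_converg_fred_det}, by contrast, diagonalises the genuinely compact operator $\bm{D}_\varrho\mathcal{A}_\varrho$ and is unaffected.
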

\begin{proof}
We let $\mathcal{Z}_\varrho = \varrho\bm{Z}_2\star[\cdot]$ and see that $\mathcal{H}_\varrho = \bm{D}_\varrho^{-1}+\mathcal{Z}_\varrho$. It suffices to check the symmetry of the operator $\mathcal{Z}_\varrho$. Let $\vec{f},\vec{h}\in L^2([0,T]; L^2(\Omega,\varrho^{-1}))$, we see by direct calculation
\begin{align}
\int_0^T\mathrm{d}t\,\langle \vec{h}, \mathcal{Z}_\varrho\vec{f}\rangle_{L^2(\Omega,\varrho^{-1})} &= \int_0^T\mathrm{d}t\,\int\mathrm{d}\vec{r}\, \varrho^{-1}(\vec{r},t) \vec{h}(\vec{r},t)\cdot\varrho(\vec{r},t) \int\mathrm{d}\vec{r}'\,  \bm{Z}_{2}(\vec{r},\vec{r}')\vec{f}(\vec{r}',t)\nonumber\\
& = \int_0^T\mathrm{d}t\,\int\mathrm{d}\vec{r}'\, \varrho^{-1}(\vec{r}',t)\vec{f}(\vec{r}',t)\cdot \varrho(\vec{r}',t)\int\mathrm{d}\vec{r}\, \vec{h}(\vec{r},t) \bm{Z}_{2}(\vec{r},\vec{r}')\nonumber\\
& = \int_0^T\mathrm{d}t\,\langle \mathcal{Z}_\varrho^\ast[\vec{h},\varrho], \vec{f} \rangle_{L^2(\Omega,\varrho^{-1})} \label{eq:adjoint_of_Z}
\end{align}
where in the final line we have used Fubini's theorem to interchange the order of integration. By the implied definition of $\mathcal{Z}_\varrho^\ast$ in \eqref{eq:adjoint_of_Z} we see that $\mathcal{Z}_\varrho^\ast = \mathcal{Z}_\varrho$ and hence $\mathcal{Z}_\varrho$ is self-adjoint. For the compactness, note that
\begin{align}
\int_0^T\mathrm{d}t\,\int\mathrm{d}\vec{r}\,\int\mathrm{d}\vec{r}'\,\varrho^{-1}(\vec{r},t) \varrho^2(\vec{r},t)\bm{Z}_{2}^2(\vec{r},\vec{r}') &\leq \|\mu_{\max}\|_{L^\infty([0,T])} \|\bm{Z}_2\|_{L^\infty}^2\\
&\quad \times |\Omega| \cdot \int_0^T\mathrm{d}t\,\|\varrho(\cdot,t)\|_{L^1(\Omega)} < \infty,
\end{align}
where we have used \eqref{ass:Z2_uniformly_bd}. Hence $\mathcal{Z}_\varrho$ is a Hilbert--Schmidt integral operator and is therefore compact. Thus Lemma is proved. 
\end{proof}

\section{Results on Convergence to Equilibrium}

\begin{lemma}[Existence and Uniqueness of Stationary Solutions]\label{thm:exis_fix_point}
\par
The self consistency equation \eqref{eq:self_cons_eq} has a smooth solution with $\|\varrho\|_{L^1(\Omega)}=1$. When the interaction energy is sufficiently small, $1 \leq 1/4\times \|V_2^{eq}\|_{L^\infty}^{-1}$, the solution is unique.
\end{lemma}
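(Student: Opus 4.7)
The plan is to recast the self-consistency equation as a fixed point problem and apply a Banach/Schauder fixed point argument following Dressler et al. Define the map $T : P_{ac}(\Omega) \to P_{ac}(\Omega)$ by
\begin{align*}
T[\varrho](\vec{r}) := \frac{e^{-\Phi[\varrho](\vec{r})}}{Z[\varrho]}, \qquad \Phi[\varrho] := V_1^{eq} + V_2^{eq}\star\varrho, \qquad Z[\varrho] := \int \mathrm{d}\vec{r}\, e^{-\Phi[\varrho](\vec{r})},
\end{align*}
so that fixed points of $T$ are precisely solutions of \eqref{eq:self_cons_eq}. First I would check that $T$ is well defined: since $\varrho$ is a probability density on the bounded domain $\Omega$ and $V_1^{eq},V_2^{eq}\in H^1(\Omega)$ (indeed one can take $L^\infty$ bounds via the ambient assumptions), $\Phi[\varrho]$ is bounded, so $Z[\varrho]\in [C_-,C_+]$ for strictly positive constants depending only on $|\Omega|$ and $\|V_1^{eq}\|_{L^\infty}, \|V_2^{eq}\|_{L^\infty}$. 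By construction $T[\varrho]\ge 0$ and $\|T[\varrho]\|_{L^1(\Omega)}=1$, so $T$ maps the closed convex subset $X:=\{\varrho\in L^1(\Omega) : \varrho\ge 0, \|\varrho\|_{L^1}=1\}$ into itself.

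For the uniqueness under the smallness condition $\|V_2^{eq}\|_{L^\infty}\le 1/4$, I would establish that $T$ is a strict contraction on $X$ in the $L^1$ norm. The key estimate uses Young's convolution inequality:
\begin{align*}
\|\Phi[\varrho_1]-\Phi[\varrho_2]\|_{L^\infty(\Omega)} = \|V_2^{eq}\star(\varrho_1-\varrho_2)\|_{L^\infty(\Omega)} \le \|V_2^{eq}\|_{L^\infty(\Omega)}\|\varrho_1-\varrho_2\|_{L^1(\Omega)}.
\end{align*}
Writing
\begin{align*}
T[\varrho_1]-T[\varrho_2] = \frac{e^{-\Phi[\varrho_1]}-e^{-\Phi[\varrho_2]}}{Z[\varrho_1]} + e^{-\Phi[\varrho_2]}\Big(\tfrac{1}{Z[\varrho_1]}-\tfrac{1}{Z[\varrho_2]}\Big),
\end{align*}
and applying the elementary bound $|e^{-a}-e^{-b}|\le e^{-\min(a,b)}|a-b|$ together with the uniform lower bound on the partition function $Z[\varrho]$, one obtains a Lipschitz estimate of the form $\|T[\varrho_1]-T[\varrho_2]\|_{L^1} \le L\,\|V_2^{eq}\|_{L^\infty}\|\varrho_1-\varrho_2\|_{L^1}$, where careful bookkeeping of the constants (using the identity $1/Z[\varrho_1]-1/Z[\varrho_2] = (Z[\varrho_2]-Z[\varrho_1])/(Z[\varrho_1]Z[\varrho_2])$ and $\|T[\varrho_i]\|_{L^1}=1$) yields $L\le 4$. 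Under the assumption $\|V_2^{eq}\|_{L^\infty}\le 1/4$, this gives a contraction constant strictly less than one, and the Banach fixed point theorem delivers a unique fixed point $\varrho_0\in X$.

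For existence without the smallness assumption, I would instead invoke Schauder's fixed point theorem: $T(X)$ lies in the set $\{\varrho\in L^1 : C_+^{-1}e^{-\|\Phi\|_\infty}\le \varrho\le C_-^{-1}e^{\|\Phi\|_\infty}\}$ of uniformly bounded densities, which is precompact in $L^1(\Omega)$ by Arzelà--Ascoli (the pointwise gradient $\nabla T[\varrho]=-T[\varrho]\,\nabla\Phi[\varrho]$ is uniformly bounded using $V_1^{eq},V_2^{eq}\in W^{1,\infty}$), and $T$ is continuous on $X$ in the $L^1$ topology by the same Lipschitz estimate. Schauder then yields at least one fixed point. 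Finally, smoothness is obtained by bootstrapping: any fixed point $\varrho_0$ is given explicitly by $e^{-\Phi[\varrho_0]}/Z[\varrho_0]$, and since $\Phi[\varrho_0]=V_1^{eq}+V_2^{eq}\star\varrho_0$ inherits at least one more degree of regularity than $\varrho_0$ from the convolution (and $V_1^{eq},V_2^{eq}$ are smooth up to the assumed regularity), iterating gives $\varrho_0\in C^\infty(\Omega)$, with strict positivity immediate from $\varrho_0=e^{-\Phi[\varrho_0]}/Z[\varrho_0]>0$. The main obstacle, and the step that needs the most care, is tracking the Lipschitz constant $L$ through the two-term splitting so that the contraction constant comes out as exactly $4\|V_2^{eq}\|_{L^\infty}$ rather than a larger quantity depending on $\|V_1^{eq}\|_{L^\infty}$; the cancellation responsible for this arises from the normalisation $\|T[\varrho]\|_{L^1}=1$.
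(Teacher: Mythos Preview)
Your approach is essentially identical to the paper's: Schauder's fixed point theorem via Arzel\`a--Ascoli for existence, and the same two-term splitting together with the normalisation cancellation for the contraction argument. One point to tighten: your elementary bound $|e^{-a}-e^{-b}|\le e^{-\min(a,b)}|a-b|$ does not directly cancel against $Z[\varrho_1]=\int e^{-\Phi[\varrho_1]}$, and the claimed constant $L\le 4$ yields only $4\|V_2^{eq}\|_{L^\infty}\le 1$ (not strictly less than $1$) at the endpoint $\|V_2^{eq}\|_{L^\infty}=1/4$. The paper instead applies the mean-value inequality in the form $|e^{a}-e^{b}|\le e^{a}e^{|a-b|}|a-b|$ with $a=-\Phi[\varrho_1]$, which places $e^{-\Phi[\varrho_1]}$ explicitly in the integrand so that it integrates to $Z[\varrho_1]$; combined with $\|\varrho_1-\varrho_2\|_{L^1}\le 2$ this gives the strict contraction constant $e^{1/2}/2<1$. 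Adjusting your exponential inequality in this way closes the gap you yourself flagged as the delicate step.
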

\begin{proof}
The proof follows Dressler et al. \cite{dressler1987stationary}. The main idea is to show that the right hand side of equation \eqref{eq:self_cons_eq} is a contraction map on $\varrho\in L^1$ a non-negative function with unit mean. 
Let $\Upsilon(\vec{r},[\varrho]) =  V_1^{eq}(\vec{r})+  V_2^{eq}\star\varrho(\vec{r})$. We show that the map $S:L^1(\Omega)\to L^1(\Omega)$ defined by
\begin{align}
S\varrho(\vec{r}):=\frac{1}{Z(\varrho)}\exp\left\lbrace - \Upsilon(\vec{r},[\varrho]) \right\rbrace
\end{align}
has a fixed point. Let $B$ be the unit ball in $L^1(\Omega)$ (the subset of pre-normalised functions) we clearly have $S(B)\subset B$ since $\|S\varrho\|_{L_1(\Omega)} = 1$. We must show that $S$ is continuous and $S(B)$ is a compact subset of $B$. Observe that if $V_2^{eq}$ is uniformly continuous then $\{V_2^{eq}\star \varrho \,| \, \varrho\in B\}$ is uniformly equicontinuous. Then by Arzela-Ascoli there exists a sequence $V_2^{eq}\star \varrho_{n_k}$ converging uniformly to some $F$,
\begin{align}
V_2^{eq}\star \varrho_{n_k} \to F \quad \text{ in } L^\infty(\Omega) \quad \text{as} \quad  k\to \infty.
\end{align}

Now observe that there exists $N$ such that for every $n_k \geq N$
\begin{align}\label{eq:arzela-ascoli_limit_integral}
\int\mathrm{d}\vec{r}\,|e^{-( V_1^{eq}+ V_2^{eq}\star \varrho_{n_k})}-e^{-( V_1^{eq}+ F)}| \leq \tfrac{1}{2}\int_\Omega\mathrm{d}\vec{r}e^{- V_1^{eq}(\vec{r})}.
\end{align}
So, by the Lebesgue dominated convergence theorem, since the integral may be dominated by constants times $e^{- V_1^{eq}(\vec{r})}$ and the limit $k\to \infty$ may be taken inside the left hand side integral of \eqref{eq:arzela-ascoli_limit_integral} giving
\begin{align}
\lim_{k\to \infty}\int\mathrm{d}\vec{r}\,|e^{-( V_1^{eq}+ V_2^{eq}\star \varrho_{n_k})}-e^{-( V_1^{eq}+ F)}| = 0.
\end{align}
Similarly the composition of $\exp(\cdot)$ and $V_2^{eq}\star \varrho$ is continuous and by the Lebesgue dominated convergence theorem
\begin{align}
\lim_{n\to\infty} Z(\varrho_{n}) = Z(\lim_{n\to\infty} \varrho_{n})= Z(F).
\end{align}
Hence $S$ is continuous. Now we may write
\begin{align}
S\varrho_{n}\to f:=\frac{e^{-(F+ V_1^{eq})}}{Z(F)} \quad \text{ in } L^1 \quad \text{ as } n\to \infty.
\end{align}
Hence for any sequence in $S(B)$ there is a convergent subsequence whose limit is in $S(B)$ and $\text{Im}(S)$ is compact. So by Schauder fixed point theorem there exists a fixed point.

 Now let $\varrho_1,\varrho_2\in B$ then
\begin{align}
\|S\varrho_1-S\varrho_2\|_{L^1}
& =\int \mathrm{d}\vec{r} \Big| \tfrac{e^{-\Upsilon[\varrho_1]}}{Z(\varrho_1)} - \tfrac{e^{-\Upsilon[\varrho_2]}}{Z(\varrho_2)} \Big|  
= \int \mathrm{d}\vec{r} \Big| \tfrac{e^{-\Upsilon[\varrho_1]}}{Z(\varrho_1)} - \tfrac{e^{-\Upsilon[\varrho_2]}}{Z(\varrho_1)}
+\tfrac{e^{-\Upsilon[\varrho_2]}}{Z(\varrho_1)} - \tfrac{e^{-\Upsilon[\varrho_2]}}{Z(\varrho_2)}\Big| \nonumber \\
&\leq \tfrac{1}{Z(\varrho_1)} \int \mathrm{d}\vec{r} \Big| e^{-\Upsilon[\varrho_1]} - e^{-\Upsilon[\varrho_2]}\Big| 
+ \int \mathrm{d}\vec{r}\Big|\tfrac{e^{-\Upsilon[\varrho_2]}}{Z(\varrho_1)}-\tfrac{e^{-\Upsilon[\varrho_2]}}{Z(\varrho_2)} \Big|.\label{estimate41a}
\end{align}
Considering now the second term of~\eqref{estimate41a}, we have
\begin{align}
\int \mathrm{d}\vec{r} \, \Big|\tfrac{e^{-\Upsilon[\varrho_2]}}{Z(\varrho_1)}-\tfrac{e^{-\Upsilon[\varrho_2]}}{Z(\varrho_2)} \Big|
& =  \Big|\tfrac{1}{Z(\varrho_1)}-\tfrac{1}{Z(\varrho_2)} \Big| \int \mathrm{d}\vec{r} \,  e^{-\Upsilon[\varrho_2]}
= Z(\varrho_2) \Big|\tfrac{1}{Z(\varrho_1)}-\tfrac{1}{Z(\varrho_2)} \Big| \\
& = \Big|\tfrac{Z(\varrho_2) - Z(\varrho_1)}{Z(\varrho_1)}\Big|
= \tfrac{1}{Z(\varrho_1)} \Big| \int \mathrm{d}\vec{r}\,e^{-\Upsilon[\varrho_1]}-\int \mathrm{d}\vec{r}\,e^{-\Upsilon[\varrho_2]}  \Big| \\
& \leq \tfrac{1}{Z(\varrho_1)}  \int \mathrm{d}\vec{r}\, \Big|e^{-\Upsilon[\varrho_1]} - e^{-\Upsilon[\varrho_2]}  \Big|.
\end{align}
Using this estimate in ~\eqref{estimate41a} then gives
\[
	\|S\varrho_1-S\varrho_2\|_{L^1} \leq \tfrac{2}{Z(\varrho_1)}  \int \mathrm{d}\vec{r}\, \Big|e^{-\Upsilon[\varrho_1]} - e^{-\Upsilon[\varrho_2]}  \Big|.
\]
We will now show that $S$ is a contraction. We have, by the mean value theorem, $\forall a,b\in\mathbb{R}$, $|e^{a}-e^{b}|\leq e^{a}e^{|a-b|}|a-b|$. Using this inequality with $a = -\Upsilon[\varrho_1]$, $b = -\Upsilon[\varrho_2]$ gives
\begin{align}
\tfrac{2}{Z(\varrho_1)}\int \mathrm{d}\vec{r}\,|e^{-\Upsilon[\varrho_1]}-e^{-\Upsilon[\varrho_2]}|  
&\leq \tfrac{2}{Z(\varrho_1)} \int \mathrm{d}\vec{r}\, e^{-\Upsilon[\varrho_1]} e^{|\Upsilon[\varrho_1] - \Upsilon[\varrho_1]|}|\Upsilon[\varrho_1] - \Upsilon[\varrho_1]| \\
&= \tfrac{2}{Z(\varrho_1)} \int \mathrm{d}\vec{r}\, e^{-\Upsilon[\varrho_1]} e^{|  V_2^{eq} \star (\varrho_1-\varrho_2)|}
|  V_2^{eq} \star (\varrho_1-\varrho_2)|.
\end{align}
Note that
\[
	|V_2^{eq} \star f| = \Big| \int \mathrm{d}\vec{r}' V_2^{eq}(\vec{r}-\vec{r'}) f(\vec{r}') \Big|
	\leq \| V_2^{eq}\|_{L^\infty} \Big| \int \mathrm{d}\vec{r'} f(\vec{r}')  \Big|
	\leq \| V_2^{eq}\|_{L^\infty} \| f \|_{L^1},
\]
and assuming $ 1 \leq1/4\times \|V_2^{eq}\|_{L^\infty}^{-1}$, we obtain
\begin{align}
\tfrac{2}{Z(\varrho_1)}\int \mathrm{d}\vec{r}\,|e^{-\Upsilon[\varrho_1]}-e^{-\Upsilon[\varrho_2]}|  
&\leq \tfrac{2}{Z(\varrho_1)} e^{\tfrac{1}{4} \|\varrho_1-\varrho_2\|_{L^1}} \tfrac{1}{4} \|\varrho_1-\varrho_2\|_{L^1} 
\int \mathrm{d}\vec{r}\, e^{-\Upsilon[\varrho_1]} \\
& \leq \frac{e^{1/2}}{2}  \|\varrho_1-\varrho_2\|_{L^1} 
< \|\varrho_1-\varrho_2\|_{L^1},
\end{align}
where we have used that  $\|\varrho_1-\varrho_2\|_{L^1} \leq 2$ and $e^{1/2}/2<1$.
Hence $S$ is a contraction and by the contraction mapping theorem the fixed point is unique.
\end{proof}

\begin{theorem}[Trend to Equilibrium in $L^2(\Omega)$]\label{lem:exp_conv_in_L2_app}
\par
Let $\varrho\in C^1([0,\infty];C^2(\Omega)) $ be a solution of \eqref{eq:ddft_dyn_non_gradient_flow} with initial data $\varrho_0\in L^2(\Omega)$ a probability density. Let $1 \leq 1/4\|V_2^{eq}\|_{L^\infty}^{-1}$, if
\begin{align}
r_{t} := \hat{\mu}_{\min}(t)c^{-2}_{pw}- 2\hat{\mu}_{\max}(t)(e+1)\|\nabla V_2^{eq}\|_{L^{\infty}(\Omega)}^2>0,
\end{align} 
where $c_{pw}$ is a Poincar{\' e}$-$Wirtinger constant on the domain $\Omega$ and $\hat{\mu}_{\max}(t) = \int_0^t\mathrm{d}s\,\mu_{\max}(s)$ and $\hat{\mu}_{\min}(t) = \int_0^t\mathrm{d}s\,\mu_{\min}(s)$ are the time-mean of the largest and smallest eigenvalues of $\bm{D}_{\varrho(\vec{r},t)}$. Then, $\varrho\to \varrho_\infty\in C^\infty(\Omega)\cap P_{ac}^{+}(\Omega)$ in $L^2(\Omega)$ exponentially as $t\to \infty$, where $\varrho_\infty$ is the unique equilibrium density ensured by Lemma \ref{thm:exis_fix_point}. In particular the convergence in $L^2(\Omega)$ is given by
\begin{align}
\|\varrho(\cdot, t)-\varrho_\infty(\cdot)\|^2_{L^2(\Omega)}\leq \|\varrho_0(\cdot)-\varrho_\infty(\cdot)\|_{L^2(\Omega)}^2 e^{-r_{ t}}
\end{align}
as $t \to \infty$. 
\end{theorem}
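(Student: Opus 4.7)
The plan is to study the deviation $\psi(\vec{r},t) := \varrho(\vec{r},t) - \varrho_\infty(\vec{r})$. Because both $\varrho(\cdot,t)$ and $\varrho_\infty$ are probability densities (Corollary \ref{cor:L_1_varrho_is_1} and Lemma \ref{thm:exis_fix_point}), one has the structural identity $\int_\Omega \psi\, d\vec{r} = 0$ for all $t \geq 0$, which is what will ultimately legitimise Poincar\'e–Wirtinger at the final step. My first task would be to derive a closed equation for $\psi$: by Corollary \ref{cor:stationary_density_independent_of_D} the equilibrium satisfies $\nabla\varrho_\infty + \varrho_\infty \nabla V_2^{eq}\star\varrho_\infty = \vec{0}$ pointwise (the equilibrium flux is zero), so substituting $\varrho = \varrho_\infty + \psi$ into \eqref{eq:ddft_dyn_non_gradient_flow} (with $V_1^{eq} = 0$ and $\bm{Z}_2 = 0$) and using this identity yields
\begin{align*}
\partial_t\psi = \nabla \cdot \Bigl(\bm{D}_\varrho\bigl[\nabla\psi + \psi\nabla V_2^{eq}\star\psi + \psi\nabla V_2^{eq}\star\varrho_\infty + \varrho_\infty\nabla V_2^{eq}\star\psi\bigr]\Bigr),
\end{align*}
equipped with the no-flux boundary condition inherited from both $\varrho$ and $\varrho_\infty$.

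Next, I would take the $L^2(\Omega)$ inner product of this PDE against $\psi$ and integrate by parts; the boundary integral vanishes by the no-flux condition. The result is
\begin{align*}
\tfrac{1}{2}\tfrac{d}{dt}\|\psi\|_{L^2}^2 = -\int_\Omega \nabla\psi\cdot\bm{D}_\varrho\nabla\psi\, d\vec{r} - \int_\Omega \nabla\psi\cdot\bm{D}_\varrho\bigl[\psi\nabla V_2^{eq}\star\psi + \psi\nabla V_2^{eq}\star\varrho_\infty + \varrho_\infty\nabla V_2^{eq}\star\psi\bigr]d\vec{r}.
\end{align*}
The principal dissipation term is bounded above by $-\mu_{\min}(t)\|\nabla\psi\|_{L^2}^2$ by Assumption \eqref{ass:D_pos_def_weak_diffable}. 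For the three cross terms, I would rewrite each as $\int(\bm{D}_\varrho^{1/2}\nabla\psi)\cdot(\bm{D}_\varrho^{1/2}F_i)\,d\vec{r}$ and apply Young's inequality with weights chosen so that only a controlled fraction of the coercive dissipation is absorbed. The remaining norms $\|\bm{D}_\varrho^{1/2}F_i\|_{L^2}$ are estimated using $\|\bm{D}_\varrho\|_{op}\leq \mu_{\max}(t)$, the pointwise convolution bound $|\nabla V_2^{eq}\star u|(\vec{r})\leq \|\nabla V_2^{eq}\|_{L^\infty}\|u\|_{L^1}$, the elementary $L^1$ estimates $\|\psi\|_{L^1}\leq 2$ and $\|\varrho_\infty\|_{L^1}=1$, and a uniform $L^\infty$ bound on $\varrho_\infty$ extracted from the fixed-point representation \eqref{eq:self_cons_eq}: under the hypothesis $\|V_2^{eq}\|_{L^\infty}\leq 1/4$, the self-consistency $\varrho_\infty = Z^{-1}e^{-V_2^{eq}\star\varrho_\infty}$ yields an exponential bound of order $e^{1/2}/|\Omega|$. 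Optimising the Young weights across the three cross terms produces the constant $2(e+1)\|\nabla V_2^{eq}\|_{L^\infty}^2$ in the error.

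The combined estimate reduces to an inequality of the form $\tfrac{1}{2}\tfrac{d}{dt}\|\psi\|_{L^2}^2 \leq -\tfrac{1}{2}\mu_{\min}(t)\|\nabla\psi\|_{L^2}^2 + \mu_{\max}(t)(e+1)\|\nabla V_2^{eq}\|_{L^\infty}^2\|\psi\|_{L^2}^2$. Applying the Poincar\'e–Wirtinger inequality $\|\psi\|_{L^2}^2 \leq c_{pw}^2 \|\nabla\psi\|_{L^2}^2$ (valid by the mean-zero property of $\psi$) converts this into the scalar Gr\"onwall-type inequality $\tfrac{d}{dt}\|\psi\|_{L^2}^2 \leq -\bigl(\mu_{\min}(t)c_{pw}^{-2} - 2\mu_{\max}(t)(e+1)\|\nabla V_2^{eq}\|_{L^\infty}^2\bigr)\|\psi\|_{L^2}^2$, and integrating from $0$ to $t$ produces $\|\psi(t)\|_{L^2}^2 \leq \|\psi(0)\|_{L^2}^2 e^{-r_t}$ as required. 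The main obstacle is the bookkeeping in the cross-term estimates: one must apportion the Young-inequality weights so as to \emph{simultaneously} (i) consume only a fixed fraction of the $\|\bm{D}_\varrho^{1/2}\nabla\psi\|_{L^2}^2$ mass (preserving an effective $\mu_{\min}$ coefficient on the coercive term), and (ii) land on precisely the constant $2(e+1)\|\nabla V_2^{eq}\|_{L^\infty}^2$ in the error, which in turn requires the explicit $L^\infty$ control on $\varrho_\infty$ drawn from the fixed-point equation.
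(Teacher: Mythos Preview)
Your proposal is correct and follows essentially the same route as the paper's proof: derive the equation for $\psi=\varrho-\varrho_\infty$, test against $\psi$, split via $\bm{D}_\varrho^{1/2}$ and apply Young's inequality, control $\varrho_\infty$ through the self-consistency relation \eqref{eq:self_cons_eq}, then close with Poincar\'e--Wirtinger and Gr\"onwall. The one cosmetic difference is that the paper keeps the cross term grouped as $\psi\,\nabla V_2^{eq}\star\varrho$ (rather than expanding $\varrho=\varrho_\infty+\psi$ into a separate quadratic piece), so that $\|\varrho\|_{L^1}=1$ immediately yields the constant $(e+1)$ without any weight optimisation across three terms.
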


\begin{proof}
Let $\psi  = \varrho-\varrho_\infty$, then the evolution equation for $\psi$ may be written 
\begin{align}\label{eq:exp_conv_in_l2_bound_1}
\partial_t\psi - \nabla \cdot(\bm{D}_{\varrho}\,\nabla \psi) =   \nabla \cdot(\bm{D}_{\varrho}\,(\varrho_{\infty}\nabla V_2^{eq}\star \psi +\psi\nabla V_2^{eq}\star \varrho)).
\end{align}
Multiplying by $\psi$, integrating and using the boundary condition $\Pi[\psi]\cdot\vec{n} = 0$ on $\partial \Omega\times [0,T]$ we obtain
\begin{align}
\tfrac{1}{2}\der[]{t}\|\psi (t)\|^2_{L^2(\Omega)} +\|\bm{D}_{\varrho}^{1/2}\nabla \psi \|_{L^2(\Omega)}^2 \leq \int \mathrm{d}\vec{r}\,|\bm{D}_{\varrho}^{1/2}\nabla \psi | |  \bm{D}_{\varrho}^{1/2}(\varrho_\infty\nabla V_2^{eq}\star \psi +\psi\nabla V_2^{eq}\star \varrho)|. 
\end{align}
Using H{\"o}lder's inequality on the right hand side this becomes
\begin{align}
\tfrac{1}{2}\der[]{t}\|\psi (t)\|^2_{L^2(\Omega)} +\|\bm{D}_{\varrho}^{1/2}\nabla \psi \|_{L^2(\Omega)}^2 \leq \|\bm{D}_{\varrho}^{1/2}\nabla \psi \|_{L^2(\Omega)}\|  \bm{D}_{\varrho}^{1/2}(\varrho_\infty\nabla V_2^{eq}\star \psi +\psi\nabla V_2^{eq}\star \varrho)\|_{L^2(\Omega)}. 
\end{align}
Now using Young's inequality twice on the right hand side we obtain
\begin{align}
&\tfrac{1}{2}\der[]{t}\|\psi (t)\|^2_{L^2(\Omega)} +\|\bm{D}_{\varrho}^{1/2}\nabla \psi \|_{L^2(\Omega)}^2 \leq \tfrac{1}{2}\|\bm{D}_{\varrho}^{1/2}\nabla \psi \|_{L^2(\Omega)}^2 
+  \tfrac{1}{2} \|\bm{D}_{\varrho}^{1/2}(\varrho_\infty\nabla V_2^{eq}\star \psi +\psi\nabla V_2^{eq}\star \varrho)\|_{L^2(\Omega)}^2 \nonumber \\
& \quad\leq \tfrac{1}{2}\|\bm{D}_{\varrho}^{1/2}\nabla \psi \|_{L^2(\Omega)}^2 +  \|\varrho_\infty\bm{D}_{\varrho}^{1/2}\nabla V_2^{eq}\star \psi \|^2_{L^2(\Omega)} +   \|\psi\bm{D}_{\varrho}^{1/2}\nabla V_2^{eq}\star \varrho \|^2_{L^2(\Omega)}.\label{eq:l2_trend_inq_1}
\end{align}
From the positive definiteness and boundedness of the diffusion tensor, we have $\mu_{\min}\leq \|\bm{D}_{\varrho}\|_{L^\infty(\Omega)}\leq \mu_{\max}$.

We also have the following bounds in terms of $\|\psi\|_{L^2(\Omega)}^2$
\begin{align}
&\|\psi\bm{D}_{\varrho}^{1/2}\nabla V_2^{eq}\star \varrho \|^2_{L^2(\Omega)}\leq\mu_{\max}\|\nabla V_2^{eq}\|_{L^\infty(\Omega)}^2\|\psi\|_{L^2(\Omega)}^2,\label{eq:l2_trend_formula_1a}\\
&\|\varrho_\infty\bm{D}_{\varrho}^{1/2}\nabla V_2^{eq}\star \psi \|^2_{L^2(\Omega)}\leq |\Omega|\mu_{\max}\|\varrho_\infty\|_{L^2(\Omega)}^2\|\nabla V_2^{eq}\|^2_{L^\infty(\Omega)}\|\psi \|_{L^2(\Omega)}^2\label{eq:l2_trend_formula_1b}
\end{align}
where $|\Omega|$ denotes the size of $\Omega$ and in \eqref{eq:l2_trend_formula_1a} we have used that
$\nabla V_2^{eq} \star \varrho \leq  \| \nabla V_2^{eq}\|_{L^\infty(\Omega)} \| \varrho \|_{L^1(\Omega)}$ and the fact that $\varrho$ is a probability density with
$\| \varrho \|_{L^1} =1$ (see Corollary \ref{cor:L_1_varrho_is_1}). 
To obtain \eqref{eq:l2_trend_formula_1b} we use that
\[
	\|\varrho_\infty\bm{D}_{\varrho}^{1/2}\nabla V_2^{eq}\star \psi \|^2_{L^2(\Omega)}\leq
	\mu_{\max}\|\varrho_\infty\|_{L^2(\Omega)}^2\|\nabla V_2^{eq}\|^2_{L^\infty(\Omega)}
	\int  \mathrm{d}\vec{r}\, \Big| \rho_\infty(\vec{r}) \int \mathrm{d}\vec{r} ' \, \psi(\vec{r}') \Big|^2.
\]
We then note that, by H\"older's inequality, $ \int \mathrm{d}\vec{r} ' \, \psi(\vec{r}') \leq \|\psi\|_{L^2}\|1\|_{L^2(\Omega)} = 
|\Omega|^{1/2} \|\psi\|_{L^2}$, which gives the result. For \eqref{eq:l2_trend_formula_1b} it remains to bound the non explicit stationary distribution $\varrho_\infty$ in $L^2(\Omega)$, to do this we observe that by the self-consistency equation \eqref{eq:self_cons_eq}
\begin{align}
\|\varrho_\infty\|_{L^2(\Omega)}^2\leq \frac{|\Omega|\times e^{2\|V_2^{eq}\|_{L^\infty}}}{|\Omega|^2\times e^{-2\|V_2^{eq}\|_{L^\infty}}}.\label{eq:l2_trend_formula_2}
\end{align}

Using \eqref{eq:l2_trend_formula_1a}, \eqref{eq:l2_trend_formula_2} and the bounds on $\bm{D}_{\varrho}$, inequality \eqref{eq:l2_trend_inq_1} becomes
\begin{align}
\tfrac{1}{2}\der[]{t}\|\psi (t)\|^2_{L^2(\Omega)} \leq -\tfrac{\mu_{\min}}{2}\|\nabla \psi \|_{L^2(\Omega)}^2 + \mu_{\max}(e^{4\|V_2^{eq}\|_{L^\infty}}+1)\|\nabla V_2^{eq}\|_{L^{\infty}(\Omega)}^2\|\psi \|^2_{L^2(\Omega)}. 
\end{align}
Now since $\psi$ has mean zero we may use the Poincar{\' e}--Wirtinger inequality to write
\begin{align}
\der[]{t}\|\psi (t)\|^2_{L^2(\Omega)}&\leq -\mu_{\min}c^{-2}_{pw}\|\psi \|_{L^2(\Omega)}^2 + 2\mu_{\max}(e^{4\|V_2^{eq}\|_{L^\infty}}+1)\|\nabla V_2^{eq}\|_{L^{\infty}(\Omega)}^2\|\psi \|^2_{L^2(\Omega)}.
\end{align}
Finally, by Gr{\"o}nwall's lemma \cite{evans2002partial}, we obtain 
\begin{align}
\|\psi (t)\|^2_{L^2(\Omega)} \leq \|\psi(0)\|_{L^2(\Omega)}^2\exp\left\lbrace -(\hat{\mu}_{\min}(t)c^{-2}_{pw}- 2\hat{\mu}_{\max}(t)(e^{4\|V_2^{eq}\|_{L^\infty}}+1)\|\nabla V_2^{eq}\|_{L^{\infty}(\Omega)}^2)\right\rbrace,  \label{eq:l2_trend_gronwall_1}
\end{align}
where $\hat{\mu}_{\min}(t) = \int_0^t\mathrm{d}s\,\mu_{\min}(s)$ and $\hat{\mu}_{\max}(t) = \int_0^t\mathrm{d}s\,\mu_{\max}(s)$. Therefore for a unique stationary density $\varrho_\ast$, the necessary condition for exponential convergence $\varrho\to\varrho_\ast$ in $L^2(\Omega)$ as $t\to\infty$ is
\begin{equation}\label{eq:gen_kappa_2_ineq_for_exp_convergence}
\hat{\mu}_{\min}(t)c^{-2}_{pw}- 2\hat{\mu}_{\max}(t)(e^{4\|V_2^{eq}\|_{L^\infty}}+1)\|\nabla V_2^{eq}\|_{L^{\infty}(\Omega)}^2>0.
\end{equation}

It will now be seen that, under the assumption that $\varrho_\infty$ is the unique stationary density with
$\|V_2^{eq}\|_{L^\infty} \leq 1$, we may obtain an explicit condition for the $\|\psi (t)\|^2_{L^2(\Omega)}$. In particular \eqref{eq:l2_trend_gronwall_1} becomes
\begin{align}
\|\psi (t)\|^2_{L^2(\Omega)}\leq \|\psi(0)\|_{L^2(\Omega)}^2 \exp\left\lbrace -(\hat{\mu}_{\min}(t)c^{-2}_{pw}- 2\hat{\mu}_{\max}(t)(e+1)\|\nabla V_2^{eq}\|_{L^{\infty}(\Omega)}^2)\right\rbrace.  
\end{align}\label{eq:l2_trend_gronwall_2}
Then to ensure the argument in the exponential remains negative, we require
\begin{align}
1<\frac{\hat{\mu}_{\min}(t)c_{pw}^{-2}\|\nabla V_2^{eq}\|^{-2}_{L^\infty}}{2(1+e)\hat{\mu}_{\max}(t)}.
\end{align}
This completes the proof of the theorem.
\end{proof}

\section{Results for Existence \& Uniqueness}\label{app:results_ex_uni}

We begin by determining some useful results: first, that $\varrho(\vec{r},t)$ is bounded above in $L^1(\Omega)$ for all time by initial data $\varrho_0$ and second, the $L^1(\Omega)$ norm of $\varrho$ is unity for all time and $\varrho(\vec{r},t)$ is non-negative. The non-negativity is strengthened to strict positivity of $\varrho(\vec{r},t)$ in Section \ref{subsec:strict_pos_rho}. The results in this section are analogous to those in \cite{chazelle2017well}, \cite{greg_mckean_vlasov_torus} with the difference that the boundary conditions we consider are no-flux the diffusion tensor is non-constant. It will be seen that the natural dual space $H^{-1}(\Omega)$ to $H^1(\Omega)$ is provided by the no-flux condition. This is due to the divergence theorem and the boundary condition $\Xi[\varrho_n]\cdot\vec{n}=0$ on $\partial \Omega \times [0,T]$, there is no boundary term, and the normal characterisation of $H^{-1}=(H^1_0)^\ast$ carries over to $H^{1}(\Omega)$.

\subsection{Existence and Uniqueness Results from Classical PDE}

\begin{theorem}\label{thm:classical_PDE_existence}
To start, let $T>0$ and consider a sequence of linear parabolic equations we introduce \eqref{eq:well_posed_un_ibvp}, the frozen version of \eqref{eq:ddft_dyn_non_gradient_flow}, indexed by $n\in \mathbb{N}$,
\begin{align}\label{eq:well_posed_un_ibvp}
\begin{cases}
\quad\partial_t \varrho_n-\nabla \cdot\left(\bm{D}_{\varrho_{n-2}}\nabla \varrho_n\right)=\nabla \cdot\left(\varrho_n\,\bm{D}_{\varrho_{n-2}} ( \nabla V_1 + \bm{A}[\vec{a}]+ \nabla V_2\star \varrho_{n-1})\right) ,\\
\qquad \qquad \qquad \qquad \Xi [\varrho_n]\cdot\vec{n} = 0 \quad \text{ on } \quad \partial \Omega \times [0, T],\\
\qquad\Xi[\varrho_n]:= \bm{D}_{\varrho_{n-2}}\,(\nabla \varrho_n+ 
\varrho_n\,  (  \nabla V_1(\vec{r},t) + \bm{A}[\vec{a}](\vec{r},t) +   \nabla V_2\star \varrho_{n-1})),\\
\qquad \qquad \qquad \qquad \quad  \varrho_n = \varrho_0 \quad \text{ on }\quad \Omega\times \left\lbrace t=0\right\rbrace
\end{cases}
\end{align}
for $n\geq 1$ and $\varrho_0(\vec{r},t) := \varrho_0(\vec{r})$ some initial data, for all time $t>0$. We assume that $\varrho_0\in C^\infty(\Omega)$, $\varrho\geq 0$ and $\int\mathrm{d}\vec{r} \, \varrho(\vec{r}) = 1$. Each equation is parametrised by $n$, a linear parabolic PDE for the unknown $\varrho_n$. With the assumptions \eqref{ass:D_pos_def_weak_diffable} -\eqref{ass:V1_V2_in_W_1_inf}, there exists a unique $\varrho_n\in C^\infty([0,T]; C^\infty(\Omega))$, for each $n\in \mathbb{N}$. 
\end{theorem}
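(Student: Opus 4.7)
The plan is to use strong induction on $n$, treating each equation in the family as a linear parabolic initial-boundary value problem for $\varrho_n$ whose coefficients are determined by the inductively known $\varrho_{n-2}$ and $\varrho_{n-1}$ and by the given flux $\vec{a}$. For the base cases one interprets $\varrho_j$ with $j<0$ as the time-independent extension of the prescribed initial datum $\varrho_0\in C^\infty(\Omega)$, which trivially lies in $C^\infty([0,T];C^\infty(\Omega))$; the flux $\vec{a}$ is treated as a fixed smooth datum when evaluating $\bm{A}[\vec{a}]$.

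For the inductive step, assume $\varrho_{n-2},\varrho_{n-1}\in C^\infty([0,T];C^\infty(\Omega))$. I would first verify that every coefficient in \eqref{eq:well_posed_un_ibvp} is smooth and uniformly bounded on $\overline{\Omega}\times[0,T]$. The convolutions $\bm{Z}_1\star\varrho_{n-2}$ and $\nabla V_2\star\varrho_{n-1}$ inherit smoothness from the density factor by differentiation under the integral, which is justified since $\bm{Z}_1\in L^\infty$ by \eqref{ass:Z2_uniformly_bd} and $\nabla V_2\in L^\infty$ by \eqref{ass:V1_V2_in_W_1_inf}; also $\nabla V_1$ and $\bm{A}[\vec{a}]$ are uniformly bounded. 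The inversion $\bm{D}_{\varrho_{n-2}}=(\bm{1}+\bm{Z}_1\star\varrho_{n-2})^{-1}$ is then well-defined and smooth, since by \eqref{ass:D_pos_def_weak_diffable} the matrix is uniformly positive definite and matrix inversion is analytic on the open set of invertible matrices. Consequently \eqref{eq:well_posed_un_ibvp} is a linear, uniformly parabolic equation with smooth coefficients and a smooth co-normal (no-flux) boundary operator.

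Next I would invoke standard linear parabolic existence-uniqueness theory (the Galerkin arguments of Evans, Ch.~7, or the Ladyzhenskaya--Solonnikov--Ural'tseva framework) to obtain a unique weak solution
\[
\varrho_n\in L^2([0,T];H^1(\Omega))\cap L^\infty([0,T];L^2(\Omega)),\qquad \partial_t\varrho_n\in L^2([0,T];H^{-1}(\Omega)),
\]
satisfying the no-flux condition and the initial datum. Coercivity of the associated bilinear form follows from the uniform positive definiteness of $\bm{D}_{\varrho_{n-2}}$; the drift and zeroth-order contributions are uniformly bounded and are absorbed by Young's inequality together with a Gr\"onwall argument, which simultaneously yields the energy estimate exploited later in Section \ref{sec:existence_uniqueness_denstiy_flux_pair}. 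Uniqueness follows from linearity: the difference of two solutions satisfies the same equation with zero data and the same energy estimate forces it to vanish.

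Finally, I would bootstrap regularity by iterated application of parabolic Schauder and $L^p$ estimates. Interior $C^\infty$-regularity is immediate from the hypoellipticity of a linear parabolic operator with smooth coefficients. Regularity up to $\partial\Omega$ is obtained by flattening the boundary and applying boundary Schauder estimates, combined with the compatibility conditions at $t=0$, which hold to all orders because $\varrho_0\in C^\infty(\Omega)$ is compatible with the no-flux condition (any additional compatibility at the corners $\partial\Omega\times\{0\}$ is propagated by the equation). The main technical obstacle is precisely this boundary regularity: with only $\partial\Omega\in C^1$ one cannot expect full $C^\infty$ regularity up to the boundary in general, so either the boundary must be assumed smoother (a mild hypothesis that is implicit in most DDFT settings), or else one replaces the $C^\infty$ conclusion with $H^k$-regularity up to $\partial\Omega$ for every $k$ combined with interior $C^\infty$, which is amply sufficient for all subsequent arguments in the paper.
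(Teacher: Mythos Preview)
The paper does not actually prove this statement: Theorem~\ref{thm:classical_PDE_existence} is stated in the appendix without proof, and in the body text the authors simply invoke ``classical theory (e.g., see Appendix~\ref{thm:classical_PDE_existence}, or \cite{evans2002partial})'' to assert the existence of a unique $\varrho_n\in C^\infty([0,T];C^\infty(\Omega))$. Your proposal is therefore not competing with a proof in the paper but rather supplying the argument the paper omits, and the route you outline---induction on $n$, verification that the frozen coefficients are smooth and the operator uniformly parabolic, Galerkin existence with energy estimates, then bootstrapping via parabolic Schauder theory---is exactly the standard machinery one would cite from Evans or Lady\v{z}enskaja--Solonnikov--Ural'ceva.

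Your observation about boundary regularity is well taken and in fact sharper than the paper's treatment: with only $\partial\Omega\in C^1$ (as stated in Section~\ref{sec:preliminaries}) one cannot in general obtain $C^\infty$ regularity up to the boundary, and the paper glosses over this. Your suggested resolution---either implicitly assume a smoother boundary, or settle for interior $C^\infty$ together with $H^k$-regularity up to $\partial\Omega$ for all $k$---is appropriate, and the latter is indeed sufficient for everything the paper does subsequently (the energy estimates of Propositions~\ref{prop:bound_rhon} and~\ref{prop:un_H1_rho0_bound}, the Cauchy-sequence argument of Lemma~\ref{lem:rhon_is_cauchy}, and the weak-convergence passage to the limit).
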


\subsection{Useful Results.}

We identify the expansion of the absolute value function.
\begin{definition}\label{def:def_of_chi_approx_abs}

Let $\epsilon>0$ and define the convex $C^2$ approximation of $|\cdot|$ by
\begin{align}
\chi_\epsilon(\psi) = \begin{cases}
|\psi| \quad \text{ for } \quad \psi>\epsilon,\\
-\tfrac{\psi^4}{8 \epsilon^3 }+\tfrac{3 \psi^2}{4 \epsilon}+ \tfrac{3 \epsilon}{8} \quad \text{ for } \quad \psi\leq \epsilon.
\end{cases}
\end{align}
\end{definition}

We now present our first result concerning the boundedness of the the $L^1$ norm of $\varrho$ in terms of the initial data $\varrho_0$.

\begin{lemma}

If $ \varrho\in C^1([0,\infty);C^2(\Omega))$ is a solution of \eqref{eq:ddft_dyn_non_gradient_flow} with $\varrho_0\in L^1(\Omega)$ then $\|\varrho(t)\|_{L^1(\Omega)}\leq \|\varrho_0\|_{L^1(\Omega)}$ for all time $t\geq 0$.
\end{lemma}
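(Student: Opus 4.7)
The plan is to run a Kato-type $L^1$-contraction argument on the conservation law \eqref{eq:ddft_dyn_non_gradient_flow}. Concretely, I would test the equation against $\chi_\epsilon'(\varrho)$, where $\chi_\epsilon$ is the $C^2$ convex approximation of $|\cdot|$ from Definition \ref{def:def_of_chi_approx_abs}, exploit the no-flux boundary condition \eqref{eq:ddft_dyn_non_gradient_flow_bc} to kill the boundary contributions after integration by parts, show that the resulting diffusion term is non-positive, and that the resulting transport term vanishes in the limit $\epsilon\to 0$.

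More precisely, set $\vec{w}:=\nabla V_1+\nabla V_2\star\varrho+\bm{A}$ so that the flux reads $\vec{J}=\bm{D}_\varrho(\nabla\varrho+\varrho\vec{w})$ and \eqref{eq:ddft_dyn_non_gradient_flow_bc} becomes $\vec{J}\cdot\vec{n}|_{\partial\Omega}=0$. Using $\chi_\epsilon\in C^2$ and $\varrho\in C^1([0,\infty);C^2(\Omega))$, one obtains
\[
\frac{d}{dt}\int_\Omega\chi_\epsilon(\varrho)\,d\vec{r}=\int_\Omega\chi_\epsilon'(\varrho)\,\nabla\cdot\vec{J}\,d\vec{r}
=-\int_\Omega\chi_\epsilon''(\varrho)\,\nabla\varrho\cdot\bm{D}_\varrho\nabla\varrho\,d\vec{r}
-\int_\Omega\chi_\epsilon''(\varrho)\,\varrho\,\nabla\varrho\cdot\bm{D}_\varrho\vec{w}\,d\vec{r},
\]
where I used $\nabla\chi_\epsilon'(\varrho)=\chi_\epsilon''(\varrho)\nabla\varrho$ and the no-flux condition to discard the boundary integral. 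The first term on the right is $\le 0$ by convexity of $\chi_\epsilon$ (so $\chi_\epsilon''\ge 0$) combined with the positive definiteness of $\bm{D}_\varrho$ from \eqref{ass:D_pos_def_weak_diffable}.

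The main obstacle is controlling the transport term, since $\chi_\epsilon''$ blows up like $1/\epsilon$ at the origin. The saving compensation is the factor $\varrho$: from the explicit formula of Definition \ref{def:def_of_chi_approx_abs}, $\chi_\epsilon''$ is supported in $\{|\varrho|\le\epsilon\}$ and $|\chi_\epsilon''(\varrho)\,\varrho|\le 3/2$ uniformly in $\epsilon$. Hence the integrand is dominated by $\tfrac{3}{2}\,|\nabla\varrho|\,\|\bm{D}_\varrho\|_{L^\infty}\,|\vec{w}|$, which is integrable on $[0,t]\times\Omega$ in view of the $C^2$ regularity of $\varrho$, the bounds on $\bm{D}_\varrho$ from \eqref{ass:D_pos_def_weak_diffable}, the $L^\infty$ bound on $\bm{Z}_2$ from \eqref{ass:Z2_uniformly_bd}, and the potential bounds \eqref{ass:V1_V2_in_W_1_inf}. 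Pointwise, the integrand converges to $0$ as $\epsilon\to 0$: on $\{\varrho\neq 0\}$ one eventually has $|\varrho|>\epsilon$ so $\chi_\epsilon''(\varrho)=0$, while on $\{\varrho=0\}$ the factor $\varrho$ forces the product to vanish. Dominated convergence then kills this term in the limit.

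Finally, I would integrate the resulting inequality in time to get
\[
\int_\Omega\chi_\epsilon(\varrho(\vec{r},t))\,d\vec{r}-\int_\Omega\chi_\epsilon(\varrho_0(\vec{r}))\,d\vec{r}\le -\int_0^t ds\int_\Omega \chi_\epsilon''(\varrho)\,\varrho\,\nabla\varrho\cdot\bm{D}_\varrho\vec{w}\,d\vec{r},
\]
and pass to the limit $\epsilon\to 0$ using $\chi_\epsilon(\psi)\to|\psi|$ with the uniform bound $\chi_\epsilon(\psi)\le|\psi|+3\epsilon/8$ and a second application of dominated convergence (using $\varrho\in L^1(\Omega)$ uniformly in $t$ on bounded time intervals by the $C^1$ assumption). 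This yields the claim $\|\varrho(\cdot,t)\|_{L^1(\Omega)}\le\|\varrho_0\|_{L^1(\Omega)}$.
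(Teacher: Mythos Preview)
Your proposal is correct and follows the same Kato-type $L^1$-contraction strategy as the paper: test against $\chi_\epsilon'(\varrho)$, integrate by parts using the no-flux condition, exploit the good sign of the diffusion contribution, and show the transport contribution vanishes as $\epsilon\to 0$.

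The difference lies only in how the transport term is handled. The paper splits it via H\"older/Young as
\[
\Big|\int \chi_\epsilon''(\varrho)\,\varrho\,\nabla\varrho\cdot\bm{D}_{\varrho}\vec{w}\,\mathrm{d}\vec{r}\Big|
\le \tfrac12\|\bm{D}_{\varrho}^{1/2}\nabla\varrho\,[\chi_\epsilon'']^{1/2}\|_{L^2}^2
+\tfrac12\|[\chi_\epsilon'']^{1/2}\varrho\,\bm{D}_{\varrho}^{1/2}\vec{w}\|_{L^2}^2,
\]
absorbs the first half into the diffusion term, bounds the second by $c_0\|[\chi_\epsilon'']^{1/2}\varrho\|_{L^2}^2(1+\|\varrho\|_{L^1}^2)\le c_0 c_1\epsilon(1+\|\varrho\|_{L^1}^2)$ using the explicit formula for $\chi_\epsilon''$, and closes with a Gr\"onwall argument before sending $\epsilon\to 0$. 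You instead drop the diffusion term outright and control the transport integrand pointwise via $|\chi_\epsilon''(\varrho)\varrho|\le 3/2$ (supported on $\{|\varrho|\le\epsilon\}$), then invoke dominated convergence directly. Your route avoids the Gr\"onwall step and is somewhat cleaner; the paper's route has the minor advantage of producing a quantitative $O(\epsilon)$ rate before taking the limit. Both rest on the same key observation that $\varrho\chi_\epsilon''(\varrho)$ remains bounded despite $\chi_\epsilon''\sim 1/\epsilon$.
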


\begin{proof}
We write $\vec{v}_1(\vec{r},t): =  \nabla V_1(\vec{r},t) + \bm{A}(\vec{r},[\vec{a}],t)$ and suppress time dependence to ease notation. Multiplying \eqref{eq:ddft_dyn_non_gradient_flow} by $\chi_\epsilon'(\varrho)$, integrating and using the divergence theorem and chain rule, we have
\begin{align}
\der[]{t}\int \mathrm{d}\vec{r}\,\chi_\epsilon(\varrho)+\|\bm{D}_{\varrho_{n-2}}^{1/2}\nabla \varrho\,\left( \chi_\epsilon''(\varrho)\right) ^{1/2}\|_{L^2(\Omega)}^2
=-\int\mathrm{d}\vec{r}\, \nabla \varrho\, \chi_\epsilon''(\varrho)\cdot[\varrho\,\bm{D}_{\varrho_{n-2}}(\vec{r}) (\vec{v}_1+ (\nabla V_2\star\varrho)(\vec{r}) )].
\end{align}
Now by H{\"o}lder's inequality and then Young's inequality
\begin{align}
&\der[]{t}\int \mathrm{d}\vec{r}\,\chi_\epsilon(\varrho)+\|\bm{D}_{\varrho_{n-2}}^{1/2}\nabla \varrho\,[\chi_\epsilon''(\varrho)]^{1/2}\|_{L^2(\Omega)}^2\leq \|\bm{D}_{\varrho_{n-2}}^{1/2}\nabla \varrho [\chi_\epsilon''(\varrho)]^{1/2}\|_{L^2(\Omega)}
 \| [\chi_\epsilon''(\varrho)]^{1/2}\varrho \,\bm{D}_{\varrho_{n-2}}^{1/2} (\vec{v}_1+ (\nabla V_2\star \varrho) )\|_{L^2(\Omega)}\nonumber\\
&\quad \leq \tfrac{1}{2}\|\bm{D}_{\varrho_{n-2}}^{1/2}\nabla \varrho [\chi_\epsilon''(\varrho)]^{1/2}\|_{L^2(\Omega)}^2
+\tfrac{1}{2}\| [\chi_\epsilon''(\varrho)]^{1/2}\varrho\, \bm{D}_{\varrho_{n-2}}^{1/2} (\vec{v}_1+ (\nabla V_2\star \varrho) )\|_{L^2(\Omega)}^2.
\end{align}
Note there are no boundary terms due to the condition $\Pi[\varrho]\cdot\vec{n}=0$ on $\partial \Omega$. All together this implies the inequality
\begin{align}
&\der[]{t}\int \mathrm{d}\vec{r}\,\chi_\epsilon(\varrho)+\tfrac{1}{2}\|\bm{D}_{\varrho_{n-2}}^{1/2}\nabla \varrho\,\chi_\epsilon''(\varrho)^{1/2}\|_{L^2(\Omega)}^2\leq \tfrac{1}{2} \| [\chi_\epsilon''(\varrho)]^{1/2}\varrho\, \bm{D}_{\varrho_{n-2}}^{1/2} (\vec{v}_1+ (\nabla V_2\star\varrho))\|_{L^2(\Omega)}^2\nonumber\\
&\quad\leq \tfrac{1}{2}\| \bm{D}_{\varrho_{n-2}}^{1/2} (\vec{v}_1+ (\nabla V_2\star\varrho) )\|_{L^\infty}^2
\| [\chi_\epsilon''(\varrho)]^{1/2} \varrho \|_{L^2}^2 \leq c_0 \| [\chi_\epsilon''(\varrho)]^{1/2} \varrho \|_{L^2}^2 (1+\|\varrho\|_{L^1(\Omega)}^2)\label{eq:ddt_chi_rho_bound}
\end{align}
for the constant $c_0 = 2\|\mu_{\max}^{n-2}\|_{L^\infty([0,T])} \max\{\|\vec{v}_1\|_{L^\infty}^2, \|V_2\|_{L^\infty}^2\}$. It is an elementary calculation to show that 
\begin{align}
\varrho^2\chi_\epsilon''(\varrho) = \tfrac{3\varrho^2}{2\epsilon} - \tfrac{3\varrho^4}{2\epsilon^3}
\end{align}
for $\varrho\leq \epsilon$. With this, and the fact that $\chi''(\varrho) = 0$ for $\varrho>\epsilon$, we have
\begin{align}
\| [\chi_\epsilon''(\varrho)]^{1/2} \varrho \|_{L^2}^2 = \int   \mathrm{d}\vec{r}\, \varrho^2 \chi_\epsilon''(\varrho) \mathbb{I}_{\varrho\leq \epsilon}
+ \int   \mathrm{d}\vec{r}\, \varrho^2 \chi_\epsilon''(\varrho) \mathbb{I}_{\varrho > \epsilon}  = \int   \mathrm{d}\vec{r}\, \frac{3 \varrho^2(\epsilon^2 - \varrho^2)}{2 \epsilon^2} \mathbb{I}_{\varrho\leq \epsilon}
\leq \int   \mathrm{d}\vec{r}\, \frac{3 \epsilon}{2}\mathbb{I}_{\varrho\leq \epsilon} \leq c_1\epsilon \label{eq:sqrt_chi''_rho_L2}
\end{align}
for some constant $c_1$ dependent on $\Omega$. Applying Gr{\"o}nwall's lemma to $\eta(\cdot)$ a non-negative, absolutely continuous function on $[0,T]$ which satisfies for a.e. $t$
\begin{align}
\eta'(t)\leq \phi(t)\eta(t) + \psi(t)
\end{align}
where $\phi$, $\psi$ non-negative and integrable functions on $[0,T]$ gives
\begin{align}\label{eq:statement_gronwall}
\eta(t)\leq e^{\int\mathrm{d}s_0^t\,\phi(s)}\eta(t)\Big(\eta(0)+ \int_0^t\mathrm{d}s\,
\psi(s)\Big).
\end{align}

Observe that $\|\varrho\|_{L^1(\Omega)}\leq \int \mathrm{d}\vec{r}\,\chi_\epsilon(\varrho)$. Using this with \eqref{eq:ddt_chi_rho_bound}, \eqref{eq:sqrt_chi''_rho_L2} and \eqref{eq:statement_gronwall} with $\eta(t) = \phi(t) =c_1\epsilon \int \mathrm{d}\vec{r}\,\chi_\epsilon(\varrho)$ and $\psi(t) = c_1\epsilon$
we obtain
\begin{align}
\int \mathrm{d}\vec{r}\,\chi_\epsilon(\varrho)\leq \left(\int \mathrm{d}\vec{r}\,\chi_\epsilon(\varrho_0)+c_1\epsilon \,t\right)\, e^{c_1\epsilon\int_0^t\mathrm{d}s\,\int \mathrm{d}\vec{r}\,\chi_\epsilon(\varrho(\vec{r},s))}.
\end{align}
Now since $\varrho$ is assumed to be continuous in time on $[0,\infty)$ the integral in the exponential is finite. Therefore taking $\epsilon \to 0$ one obtains
\begin{align}
\|\varrho\|_{L^1}\leq \|\varrho_0\|_{L^1}
\end{align}
for every $t>0$.
\end{proof}

\begin{corollary}\label{cor:L_1_varrho_is_1}

If $ \varrho\in C^1([0,\infty);C^2(\Omega))$ is a solution of \eqref{eq:ddft_dyn_non_gradient_flow} with $\varrho_0$ a probability density, that is $\varrho_0\geq 0$ and $\int  \mathrm{d}\vec{r}\,\varrho_0(\vec{r})=1$, then $\|\varrho(t)\|_{L^1(\Omega)}=1$ and $\varrho(t)\geq 0$ in $\Omega$ for all time $t\geq 0$.
\end{corollary}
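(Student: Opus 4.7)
The plan is to combine conservation of mass, obtained by integrating \eqref{eq:ddft_dyn_non_gradient_flow} over $\Omega$ and invoking the no-flux boundary condition \eqref{eq:ddft_dyn_non_gradient_flow_bc}, with the $L^1$ contraction estimate already established in the preceding lemma.

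First I would verify conservation of mass. Since $\varrho \in C^1([0,\infty); C^2(\Omega))$ is a classical solution, we may integrate \eqref{eq:ddft_dyn_non_gradient_flow} over $\Omega$, apply the divergence theorem, and use \eqref{eq:ddft_dyn_non_gradient_flow_bc} to conclude that all boundary contributions vanish, yielding
\begin{align}
\der[]{t} \int \mathrm{d}\vec{r}\, \varrho(\vec{r}, t)
= \int_{\partial\Omega} \mathrm{d}S\, \bm{D}_\varrho\bigl(\nabla \varrho + \varrho \nabla(V_1 + V_2 \star \varrho) + \varrho \bm{A}\bigr) \cdot \vec{n} = 0.
\end{align}
Hence $\int \mathrm{d}\vec{r}\, \varrho(\vec{r}, t) = \int \mathrm{d}\vec{r}\, \varrho_0(\vec{r}) = 1$ for every $t \geq 0$.

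Next I would deduce non-negativity. The preceding lemma gives $\|\varrho(t)\|_{L^1(\Omega)} \leq \|\varrho_0\|_{L^1(\Omega)} = 1$, since by hypothesis $\varrho_0 \geq 0$. Combining this with the conservation identity produces the chain
\begin{align}
1 = \int \mathrm{d}\vec{r}\, \varrho(\vec{r}, t) \leq \int \mathrm{d}\vec{r}\, |\varrho(\vec{r}, t)| = \|\varrho(t)\|_{L^1(\Omega)} \leq 1,
\end{align}
so every inequality is in fact an equality. In particular $\int \mathrm{d}\vec{r}\,\bigl(|\varrho(\vec{r},t)| - \varrho(\vec{r},t)\bigr) = 0$, and, because the integrand is non-negative and $\varrho$ is continuous in $\vec{r}$, this forces $\varrho(\vec{r},t) \geq 0$ pointwise in $\Omega$ for every $t \geq 0$. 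Simultaneously, $\|\varrho(t)\|_{L^1(\Omega)} = 1$ is immediate from the same chain.

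No step here presents a real obstacle: all of the analytic work is already carried out in the $L^1$-contraction lemma that precedes the corollary, and the remaining content is the elementary observation that any $L^1$-nonexpansive evolution which preserves the integral of a non-negative initial datum must itself preserve non-negativity.
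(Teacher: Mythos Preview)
Your proof is correct and follows essentially the same approach as the paper: conservation of mass from the no-flux boundary condition combined with the $L^1$ contraction lemma forces $\|\varrho(t)\|_{L^1}=1$, and then the equality $\int|\varrho|=\int\varrho$ together with continuity yields pointwise non-negativity. The paper phrases the last step by splitting into $\{\varrho\geq 0\}$ and $\{\varrho<0\}$ and subtracting, but this is exactly your observation that $\int(|\varrho|-\varrho)=0$ with a non-negative integrand.
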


\begin{proof}
The argument is a standard one. Since, due to no-flux boundary conditions, we have that $ \der[]{t} \int \mathrm{d}\vec{r} \, \rho(\vec{r},t) = 0$,
and that
\begin{align}
1  = \int \mathrm{d}\vec{r}\, \varrho_0(\vec{r}) = \int \mathrm{d}\vec{r}\, \varrho(\vec{r},t) \leq \|\varrho(t)\|_{L^1(\Omega)} \leq \|\varrho(0)\|_{L^1(\Omega)} = \int \mathrm{d}\vec{r}\, \varrho_0(\vec{r}) = 1,
\end{align}
so $\|\varrho(t)\|_{L^1(\Omega)} = 1$. Also observe the two equalities
\begin{align}
&1 = \int \mathrm{d}\vec{r}\, \varrho(\vec{r},t) = \int \mathrm{d}\vec{r}\, \varrho(\vec{r},t)\mathbb{I}_{\varrho\geq 0} + \int \mathrm{d}\vec{r}\, \varrho(\vec{r},t)\mathbb{I}_{\varrho< 0},\nonumber\\
&1 = \int \mathrm{d}\vec{r}\, |\varrho(\vec{r},t)| = \int \mathrm{d}\vec{r}\, \varrho(\vec{r},t)\mathbb{I}_{\varrho\geq 0} - \int \mathrm{d}\vec{r}\, \varrho(\vec{r},t)\mathbb{I}_{\varrho< 0},\nonumber
\end{align}
where in the second line we have used the definition of the absolute value function. Subtracting these equalities we obtain
\begin{align}
2\int \mathrm{d}\vec{r}\, \varrho(\vec{r},t)\mathbb{I}_{\varrho< 0} = 0
\end{align}
which implies $ \varrho(\vec{r},t)\geq 0$ almost everywhere in $\Omega$.   Non-negativity of $\varrho$ on all of $\Omega$ follows from continuity.
%Now since $\varrho\geq 0$ almost every where in $\Omega$ this means there exists $A\subset U$ such that $\varrho(\vec{r})\geq 0$ for every $\vec{r}\in U\backslash A$ and $\mu(A) = 0$ for $\mu$ a measure on $U$. Now let $\vec{r}_\ast \in A$ since $\varrho$ is continuous on $U$ then it is continuous at $\vec{r}_\ast$ and for every $\epsilon>0$ there exists some $\delta$ such that $|\vec{r}-\vec{r}_\ast|<\delta$ implies $|\varrho(\vec{r})-\varrho(\vec{r}_\ast)|<\epsilon$. In particular for $\vec{r}\in U\backslash A$, $\varrho(\vec{r})-\varrho(\vec{r}_\ast)\leq |\varrho(\vec{r})-\varrho(\vec{r}_\ast)|<\epsilon$ and $\varrho(\vec{r})\geq 0$. In all we have $0<\epsilon +\varrho(\vec{r}_\ast)$ so $\varrho(\vec{r}_\ast)\geq 0$. Therefore $\varrho\geq 0$ on $U$.
\end{proof}

\subsection{Energy Estimates.}

We now obtain uniform estimates on $\varrho_n$ in terms of the initial data $\varrho_0$ in all the required energy norms. The detailed calculations follow \cite{chazelle2017well} but take into account the confining potential and non-constant diffusion tensor $\bm{D}_{\varrho_{n-2}}$. The explicit calculations can be found in RDMW's PhD thesis \cite{rdmwthesisddft}. The first estimate is in $L^{\infty}([0,T];L^2(\Omega))$ and $L^{2}([0,T];H^1(\Omega))$ norms.
\\
\begin{proposition}\label{prop:bound_rhon}
\par
Let $T>0$ and suppose $\{\varrho_n\}_{n\geq 1}$ satisfies \eqref{eq:well_posed_un_ibvp} with $\varrho_0\in C^{\infty}(\Omega)$ a probability density. Then there exists a constant $C(T)$, dependent on time and $\|\mu_{\max}^{n-2}\|_{L^\infty([0,T])}$, such that
\begin{align}\label{eq:L2_H_1_bound_for_rhon}
\|\varrho_n\|_{L^{\infty}([0,T];L^2(\Omega))}+\|\varrho_n\|_{L^{2}([0,T];H^1(\Omega))}\leq C(T,\|\mu_{\max}^{n-2}\|_{L^\infty([0,T])}) \|\varrho_{0}\|_{L^2(\Omega)}.
\end{align}
\end{proposition}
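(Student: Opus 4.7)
\medskip

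The plan is to derive the estimate by a classical energy argument for linear parabolic PDEs applied to each frozen equation \eqref{eq:well_posed_un_ibvp}. First I would multiply \eqref{eq:well_posed_un_ibvp} by $\varrho_n$ and integrate in space over $\Omega$. Using integration by parts together with the no-flux condition $\Xi[\varrho_n]\cdot\vec{n}=0$ on $\partial\Omega$ eliminates all boundary terms and produces the identity
\begin{align*}
\tfrac{1}{2}\der[]{t}\|\varrho_n\|_{L^2(\Omega)}^2 + \int\mathrm{d}\vec{r}\,\nabla\varrho_n\cdot\bm{D}_{\varrho_{n-2}}\nabla\varrho_n = -\int\mathrm{d}\vec{r}\,\varrho_n\nabla\varrho_n\cdot\bm{D}_{\varrho_{n-2}}(\nabla V_1+\bm{A}[\vec{a}]+\nabla V_2\star\varrho_{n-1}).
\end{align*}
The positive definiteness of $\bm{D}_{\varrho_{n-2}}$ (Assumption \eqref{ass:D_pos_def_weak_diffable}) bounds the dissipation term below by $\|\mu_{\min}^{n-2}\|^{-1}_{L^\infty([0,T])}\|\nabla\varrho_n\|_{L^2(\Omega)}^2$ — or equivalently in the proof one works directly with the quadratic form using $\bm{D}^{1/2}_{\varrho_{n-2}}$.

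Next I would control the right-hand side by inserting a factor $\bm{D}_{\varrho_{n-2}}^{1/2}\bm{D}_{\varrho_{n-2}}^{1/2}$, applying the Cauchy--Schwarz inequality, and then Young's inequality with a small weight $\epsilon$ to absorb the $\|\bm{D}_{\varrho_{n-2}}^{1/2}\nabla\varrho_n\|_{L^2(\Omega)}^2$ factor into the left-hand side. What remains is an expression of the form $C(\epsilon)\|\bm{D}_{\varrho_{n-2}}^{1/2}\varrho_n(\nabla V_1+\bm{A}[\vec{a}]+\nabla V_2\star\varrho_{n-1})\|_{L^2(\Omega)}^2$. The three transport contributions are handled separately: $\|\nabla V_1\|_{L^\infty}$ and $\|\bm{A}\|_{L^\infty}$ come directly from Assumptions \eqref{ass:D_pos_def_weak_diffable} and \eqref{ass:V1_V2_in_W_1_inf}, while the convolution term is controlled using Young's inequality together with the mass conservation $\|\varrho_{n-1}\|_{L^1(\Omega)}=1$ (guaranteed inductively by Corollary \ref{cor:L_1_varrho_is_1} applied to the previous iterate), giving $\|\nabla V_2\star\varrho_{n-1}\|_{L^\infty(\Omega)}\leq\|\nabla V_2\|_{L^\infty(\Omega)}$. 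Combining and using the upper eigenvalue bound $\|\mu_{\max}^{n-2}\|_{L^\infty([0,T])}$ then yields a differential inequality of the form
\begin{align*}
\der[]{t}\|\varrho_n(t)\|_{L^2(\Omega)}^2 + \tfrac{1}{2}\|\mu_{\min}^{n-2}\|^{-1}_{L^\infty([0,T])}\|\nabla\varrho_n(t)\|_{L^2(\Omega)}^2 \leq K\bigl(T,\|\mu_{\max}^{n-2}\|_{L^\infty([0,T])}\bigr)\|\varrho_n(t)\|_{L^2(\Omega)}^2.
\end{align*}

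Finally, I would apply Gr\"onwall's lemma to this inequality (after dropping the non-negative gradient term) to obtain $\|\varrho_n(t)\|_{L^2(\Omega)}^2\leq e^{KT}\|\varrho_0\|_{L^2(\Omega)}^2$ uniformly on $[0,T]$, which yields the $L^\infty([0,T];L^2(\Omega))$ part of the bound. Integrating the full differential inequality over $[0,T]$ and inserting the uniform $L^2$ bound just derived then produces the $L^2([0,T];H^1(\Omega))$ bound, since $\|\varrho_n\|_{L^2([0,T];H^1(\Omega))}^2 = \|\varrho_n\|_{L^2([0,T];L^2(\Omega))}^2+\|\nabla\varrho_n\|_{L^2([0,T];L^2(\Omega))}^2$ and both terms are controlled. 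The main technical subtlety is not the calculation itself (which is routine) but rather ensuring that the estimate closes uniformly in $n$: this requires that the mass conservation bound $\|\varrho_{n-1}\|_{L^1(\Omega)}=1$ propagates through the iteration, and that $\bm{A}[\vec{a}]$ and the eigenvalues $\mu^{n-2}_{\max},\mu^{n-2}_{\min}$ remain uniformly controlled by the same constant $C(T)$ independent of $n$ — this follows from Assumptions \eqref{ass:D_pos_def_weak_diffable}--\eqref{ass:Z2_uniformly_bd} on the HI tensors, which make $\bm{D}_\phi$ and $\bm{A}$ behave uniformly in their argument.
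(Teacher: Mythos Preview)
Your proposal is correct and follows essentially the same approach the paper indicates: the paper does not spell out the proof but states that the calculations follow \cite{chazelle2017well} (the standard energy method for linear parabolic equations) adapted to account for the confining potential $V_1$, the effective drift $\bm{A}$, and the non-constant diffusion tensor $\bm{D}_{\varrho_{n-2}}$, with full details deferred to \cite{rdmwthesisddft}. Your outline --- multiply by $\varrho_n$, integrate by parts using the no-flux condition, use the $\bm{D}^{1/2}$ splitting with Cauchy--Schwarz and Young, control the convolution via $\|\varrho_{n-1}\|_{L^1}=1$, then apply Gr{\"o}nwall --- is precisely this argument (one minor slip: the lower bound on the dissipation should involve the infimum of $\mu_{\min}^{n-2}$ rather than $\|\mu_{\min}^{n-2}\|_{L^\infty}^{-1}$).
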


The second estimate is for $L^{\infty}([0,T]; H^1(\Omega))$ and $L^2([0,T];L^2(\Omega))$ norms.
\\
\begin{proposition} \label{prop:un_H1_rho0_bound}
\par
Let $T>0$ and suppose $\{\varrho_n\}_{n\geq 1}$ satisfies \eqref{eq:well_posed_un_ibvp} with  $\varrho_0\in C^{\infty}(\Omega)$ a probability density. Then there exists some constant dependent on time $C(T)$ such that
\begin{align}
&\|\varrho_n\|_{L^{\infty}([0,T]; H^1(\Omega))}+\|\nabla \cdot[\bm{D}_{\varrho_{n-2}}\,\nabla \varrho_n]\|_{L^2([0,T];L^2(\Omega))}^2 \\
&\quad \leq C(T)(\|\varrho_0\|_{H^1(\Omega)}^2+(1+\|\varrho_0\|_{L^2(\Omega)})\|\varrho_0\|_{L^2(\Omega)})^{1/2}.
\end{align}
\end{proposition}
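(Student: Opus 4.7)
The strategy is to test the frozen linear PDE \eqref{eq:well_posed_un_ibvp} against $-\nabla\cdot(\bm{D}_{\varrho_{n-2}}\nabla\varrho_n)$, the natural $\bm{D}$-weighted elliptic test function. Writing $\vec{v}_{n-1}:=\nabla V_1+\bm{A}[\vec{a}]+\nabla V_2\star\varrho_{n-1}$, multiplying \eqref{eq:well_posed_un_ibvp} by this function and integrating over $\Omega$ produces $\|\nabla\cdot(\bm{D}_{\varrho_{n-2}}\nabla\varrho_n)\|_{L^2(\Omega)}^2$ on the right-hand side and, after integration by parts in the $\partial_t\varrho_n$ term, $\tfrac12\tfrac{\mathrm{d}}{\mathrm{d}t}\!\int\!\bm{D}_{\varrho_{n-2}}\nabla\varrho_n\cdot\nabla\varrho_n\,\mathrm{d}\vec{r}$ on the left, which by uniform positive-definiteness of $\bm{D}_{\varrho_{n-2}}$ (Assumption \eqref{ass:D_pos_def_weak_diffable}) dominates $\tfrac{\mu_{\min}^{\varrho_{n-2}}}{2}\tfrac{\mathrm{d}}{\mathrm{d}t}\|\nabla\varrho_n\|_{L^2(\Omega)}^2$. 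Two remainders arise from the integration by parts: a bulk term $\tfrac12\!\int(\partial_t\bm{D}_{\varrho_{n-2}})\nabla\varrho_n\cdot\nabla\varrho_n\,\mathrm{d}\vec{r}$, and a boundary term that, after substituting the no-flux condition $\bm{D}_{\varrho_{n-2}}\nabla\varrho_n\cdot\vec{n}=-\varrho_n\bm{D}_{\varrho_{n-2}}\vec{v}_{n-1}\cdot\vec{n}$ on $\partial\Omega$, becomes $\tfrac12\!\int_{\partial\Omega}\partial_t(\varrho_n^2)\bm{D}_{\varrho_{n-2}}\vec{v}_{n-1}\cdot\vec{n}\,\mathrm{d}S$. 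The transport part of the PDE contributes $\int\nabla\cdot(\bm{D}_{\varrho_{n-2}}\nabla\varrho_n)\,\nabla\cdot(\varrho_n\bm{D}_{\varrho_{n-2}}\vec{v}_{n-1})\,\mathrm{d}\vec{r}$, which by Young's inequality with parameter $\tfrac12$ has half of $\|\nabla\cdot(\bm{D}_{\varrho_{n-2}}\nabla\varrho_n)\|_{L^2}^2$ absorbed onto the left, leaving $\tfrac12\|\nabla\cdot(\varrho_n\bm{D}_{\varrho_{n-2}}\vec{v}_{n-1})\|_{L^2(\Omega)}^2$ on the right.

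I would estimate the three remainders as follows. The transport remainder is controlled by the product rule using $V_1,V_2\in W^{2,\infty}$ from \eqref{ass:V1_V2_in_W_1_inf} and $\bm{D}_{\varrho_{n-2}},\bm{A}\in W^{1,\infty}$ from \eqref{ass:D_pos_def_weak_diffable}, together with the $L^\infty([0,T];L^2(\Omega))\cap L^2([0,T];H^1(\Omega))$ bounds on $\varrho_n$ and $\varrho_{n-1}$ already supplied by Proposition \ref{prop:bound_rhon}. For the $\partial_t\bm{D}_{\varrho_{n-2}}$ term I would use the identity $\partial_t\bm{D}_{\varrho_{n-2}}=-\bm{D}_{\varrho_{n-2}}(\bm{Z}_1\star\partial_t\varrho_{n-2})\bm{D}_{\varrho_{n-2}}$, substitute the PDE for $\varrho_{n-2}$ in divergence form, and transfer the spatial derivative onto the kernel $\bm{Z}_1$ by one further integration by parts, reducing matters to the $L^2([0,T];H^1(\Omega))$ estimate on $\varrho_{n-2}$ from Proposition \ref{prop:bound_rhon}. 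For the boundary term a direct trace estimate on $\partial_t\varrho_n^2$ is not available, so I would integrate by parts in $t$ over $[0,T]$ to produce endpoint contributions $\bigl[\tfrac12\!\int_{\partial\Omega}\varrho_n^2\bm{D}_{\varrho_{n-2}}\vec{v}_{n-1}\cdot\vec{n}\,\mathrm{d}S\bigr]_0^T$ plus a time integral against $\partial_t(\bm{D}_{\varrho_{n-2}}\vec{v}_{n-1})$, and handle all of these via the trace theorem combined with a small multiple of $\sup_t\|\nabla\varrho_n(t)\|_{L^2(\Omega)}^2$ that can be absorbed on the left. Integrating the resulting differential inequality from $0$ to $T$ and applying Gr\"onwall's lemma then yields both $\|\nabla\varrho_n\|_{L^\infty([0,T];L^2(\Omega))}$ (which together with the $L^\infty L^2$ control from Proposition \ref{prop:bound_rhon} gives the required $L^\infty H^1$ bound) and $\|\nabla\cdot(\bm{D}_{\varrho_{n-2}}\nabla\varrho_n)\|_{L^2([0,T];L^2(\Omega))}^2$ in the stated form.

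The main obstacle is the interlocking of the two nontrivial remainders. The $\partial_t\bm{D}_{\varrho_{n-2}}$ term forces a recursive descent to the previous iterate $\varrho_{n-2}$ and relies critically on the $W^{1,\infty}$ regularity of $\bm{Z}_1$ to move derivatives onto the kernel; the boundary term forces the integration-by-parts-in-time manoeuvre so as to avoid asking for trace regularity of $\partial_t\varrho_n$ that is not available at this level. Both are ultimately controlled by Assumptions \eqref{ass:D_pos_def_weak_diffable}--\eqref{ass:V1_V2_in_W_1_inf} and by invoking the lower-order estimate of Proposition \ref{prop:bound_rhon}; the careful accounting so that the constants aggregate into a single $C(T)$ of the stated form is the principal technical point.
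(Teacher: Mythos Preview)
The paper does not give a proof of this proposition; it states that the calculations follow Chazelle et al.\ \cite{chazelle2017well}, adapted to account for the confining potential and the non-constant diffusion tensor, with the explicit computations deferred to the thesis \cite{rdmwthesisddft}. Your plan is precisely the natural adaptation of that reference: in Chazelle et al.\ the analogous higher-order estimate is obtained by testing the frozen linear PDE against $-\Delta\varrho_n$, and your replacement of this by the $\bm{D}$-weighted test function $-\nabla\cdot(\bm{D}_{\varrho_{n-2}}\nabla\varrho_n)$, followed by Young's inequality to absorb half of the elliptic term, is the expected generalisation. You also correctly isolate the two complications that are absent in the constant-diffusion, periodic setting of \cite{chazelle2017well}---the $\partial_t\bm{D}_{\varrho_{n-2}}$ commutator and the boundary contribution coming from the no-flux condition---and your proposed treatments (substitute the divergence-form PDE for $\partial_t\varrho_{n-2}$ and shift a derivative onto the kernel; integrate the boundary term by parts in $t$ and invoke the trace theorem with an absorbable small parameter) are sound in outline.

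One caution: your handling of the $\partial_t\bm{D}_{\varrho_{n-2}}$ term explicitly invokes $W^{1,\infty}$ regularity of $\bm{Z}_1$, whereas Assumption \eqref{ass:Z2_uniformly_bd} only asserts $\bm{Z}_1\in L^\infty$, and Assumption \eqref{ass:D_pos_def_weak_diffable} gives $\bm{D}_\phi\in W^{1,\infty}(\Omega)$ in space but says nothing about time regularity. Shifting the spatial derivative from $\nabla\cdot\vec{j}_{n-2}$ onto $\bm{Z}_1$ inside the convolution (the boundary term there does vanish, thanks to the no-flux condition on $\vec{j}_{n-2}$) genuinely needs $\nabla\bm{Z}_1\in L^\infty$ or an equivalent substitute. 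This is a real technical point---it may be that the thesis tacitly assumes the extra regularity, or bounds $\bm{Z}_1\star\partial_t\varrho_{n-2}$ in $L^2_tL^\infty_x$ by duality from $\partial_t\varrho_{n-2}\in L^2_tH^{-1}_x$, which again requires some smoothness of $\bm{Z}_1$---but it does not alter the overall strategy, which matches the paper's stated approach.
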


We now have strong convergence of $\left(\varrho_n\right)_{n=1}^{\infty}$, by showing it is a Cauchy sequence in a complete metric space.
\\
\begin{lemma}[$\{\varrho_n\}_{n=1}^{\infty}$ is a Cauchy sequence]\label{lem:rhon_is_cauchy}
\par
Let $T>0$ and suppose $\{\varrho_n\}_{n\geq1}$ satisfies \eqref{eq:well_posed_un_ibvp} with $\varrho_0\in C^\infty(\Omega)$. Then there exists $\varrho\in L^1([0,T];L^1(\Omega))$ such that $\varrho_n\to \varrho$ in $L^1([0,T];L^1(\Omega))$.
\end{lemma}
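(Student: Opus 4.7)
The plan is to obtain a recursive contraction-type bound on the successive differences $w_n := \varrho_n - \varrho_{n-1}$. Subtracting the frozen equations \eqref{eq:well_posed_un_ibvp} at indices $n$ and $n-1$ and regrouping, I would write $w_n$ as the solution of a linear parabolic problem
\begin{align*}
\partial_t w_n = \nabla\cdot\vec{J}_n, \qquad \vec{J}_n := \bm{D}_{\varrho_{n-2}}\nabla w_n + \vec{G}_n, \qquad \vec{J}_n\cdot\vec{n}\big|_{\partial\Omega} = 0,
\end{align*}
where the inhomogeneity $\vec{G}_n$ splits into pieces linear in $w_n$, in $\nabla V_2\star w_{n-1}$, and in $\bm{D}_{\varrho_{n-2}}-\bm{D}_{\varrho_{n-3}}$. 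Crucially, since $\vec{J}_n$ is the difference of the no-flux fluxes at levels $n$ and $n-1$, it automatically satisfies the homogeneous no-flux boundary condition. The last contribution is Lipschitz-controlled by $w_{n-2}$ via the resolvent identity
\begin{align*}
\bm{D}_{\varrho_{n-2}}-\bm{D}_{\varrho_{n-3}} = \bm{D}_{\varrho_{n-2}}\bigl(\bm{Z}_1\star w_{n-2}\bigr)\bm{D}_{\varrho_{n-3}},
\end{align*}
which together with \eqref{ass:D_pos_def_weak_diffable}--\eqref{ass:Z2_uniformly_bd} yields the pointwise bound $\|\bm{D}_{\varrho_{n-2}}-\bm{D}_{\varrho_{n-3}}\|_{L^\infty(\Omega)}\leq C\|w_{n-2}(t)\|_{L^1(\Omega)}$.

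Following the approximation-of-$|\cdot|$ device used in the proof of Corollary \ref{cor:L_1_varrho_is_1}, I would then test this equation against $\chi_\epsilon'(w_n)$ from Definition \ref{def:def_of_chi_approx_abs} and integrate by parts. All boundary integrals vanish by the no-flux property of $\vec{J}_n$. The convexity $\chi_\epsilon''\geq 0$ produces the favourable term $-\int\chi_\epsilon''(w_n)|\bm{D}_{\varrho_{n-2}}^{1/2}\nabla w_n|^2$, which I would use, via Young's inequality, to absorb cross-products coming from the transport components of $\vec{G}_n$. The remaining contributions are estimated using \eqref{ass:V1_V2_in_W_1_inf}, the $L^\infty$ bounds on $\bm{Z}_1,\bm{Z}_2$, and the uniform energy estimates of Propositions \ref{prop:bound_rhon}--\ref{prop:un_H1_rho0_bound} (which control $\|\varrho_{n-1}\|_{L^\infty([0,T];L^2(\Omega))}$ and $\|\nabla\varrho_{n-1}\|_{L^2([0,T];L^2(\Omega))}$ independently of $n$). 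Sending $\epsilon\to 0$ and using the conservation property $\|\varrho_n(t)\|_{L^1(\Omega)}=1$ (which holds for the frozen problem by the same argument as Corollary \ref{cor:L_1_varrho_is_1}), I expect a differential inequality of the form
\begin{align*}
\der[]{t}\|w_n(t)\|_{L^1(\Omega)}\leq g(t)\bigl(\|w_n(t)\|_{L^1(\Omega)} + \|w_{n-1}(t)\|_{L^1(\Omega)} + \|w_{n-2}(t)\|_{L^1(\Omega)}\bigr),
\end{align*}
with $g\in L^1([0,T])$ independent of $n$.

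Gronwall and time integration then give a recursion of the form $\|w_n\|_{L^1([0,t];L^1(\Omega))}\leq C_T\int_0^t(\|w_{n-1}(s)\|_{L^1(\Omega)}+\|w_{n-2}(s)\|_{L^1(\Omega)})\,\mathrm{d}s$, which upon iteration produces a superexponentially decaying bound of the shape $\|w_n\|_{L^1([0,T];L^1(\Omega))}\lesssim (C_T T)^n/\lfloor n/2\rfloor!$. Consequently the telescoping series $\sum_n w_n$ converges absolutely in $L^1([0,T];L^1(\Omega))$, which identifies the limit $\varrho:=\lim_{n\to\infty}\varrho_n$ and establishes the Cauchy property claimed.

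The main technical obstacle is the transport term $\nabla\cdot\bigl((\bm{D}_{\varrho_{n-2}}-\bm{D}_{\varrho_{n-3}})\nabla\varrho_{n-1}\bigr)$. After pairing with $\chi_\epsilon'(w_n)$ and integrating by parts, it becomes a trilinear expression in which the $L^1$ slot is $w_{n-2}$ (the weakest-controlled quantity), $\nabla\varrho_{n-1}$ lives only in $L^2([0,T];L^2(\Omega))$, and $\nabla\chi_\epsilon'(w_n)=\chi_\epsilon''(w_n)\nabla w_n$ must be Young-split against the favourable diffusion term while still surviving the limit $\epsilon\to 0$. Ensuring that the absorbed constant is strictly less than one and that the residual time weight stays integrable is where the uniform $L^\infty([0,T];H^1(\Omega))$ estimate from Proposition \ref{prop:un_H1_rho0_bound} on $\varrho_{n-1}$ becomes indispensable.
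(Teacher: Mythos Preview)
Your overall strategy---subtract successive frozen equations, derive a recursive differential inequality for $w_n=\varrho_n-\varrho_{n-1}$, iterate to get factorial decay, and sum the telescoping series---is exactly the scheme the paper has in mind (it explicitly defers to \cite{chazelle2017well} and the thesis \cite{rdmwthesisddft} for the details, adapted to the non-constant diffusion tensor). Your resolvent identity for $\bm{D}_{\varrho_{n-2}}-\bm{D}_{\varrho_{n-3}}$ is the right mechanism to expose the dependence on $w_{n-2}$.

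There is, however, a genuine gap precisely at the point you flag as the ``main technical obstacle.'' After pairing the term $\nabla\cdot\bigl((\bm{D}_{\varrho_{n-2}}-\bm{D}_{\varrho_{n-3}})\nabla\varrho_{n-1}\bigr)$ with $\chi_\epsilon'(w_n)$, integrating by parts, and Young-splitting against the good diffusion, the residual is
\[
C\,\|w_{n-2}\|_{L^1(\Omega)}^2\int_\Omega \chi_\epsilon''(w_n)\,|\nabla\varrho_{n-1}|^2\,\mathrm{d}\vec{r}.
\]
Unlike the transport terms---where the mechanism $w_n^2\chi_\epsilon''(w_n)\leq C\epsilon$ from \eqref{eq:sqrt_chi''_rho_L2} kills the contribution---here the factor $|\nabla\varrho_{n-1}|^2$ is decoupled from $w_n$, and $\chi_\epsilon''(w_n)\sim \epsilon^{-1}$ on $\{|w_n|\leq\epsilon\}$. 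The uniform $L^\infty([0,T];H^1(\Omega))$ bound on $\varrho_{n-1}$ does \emph{not} by itself make this vanish as $\epsilon\to 0$; you would need $|\{|w_n|\leq\epsilon\}|=O(\epsilon)$, which you have no reason to expect. So the $\chi_\epsilon$ route, as written, stalls here.

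Two standard remedies close the gap. First, for this one term do \emph{not} integrate by parts: use $|\chi_\epsilon'|\leq 1$ directly on the divergence form and bound $\|\nabla\cdot((\bm{D}_{\varrho_{n-2}}-\bm{D}_{\varrho_{n-3}})\nabla\varrho_{n-1})\|_{L^1(\Omega)}$; expanding via the resolvent identity, the required control comes from $\bm{D}_\phi\in W^{1,\infty}$ (Assumption \eqref{ass:D_pos_def_weak_diffable}) together with the second-order estimate $\|\nabla\cdot(\bm{D}_{\varrho_{n-3}}\nabla\varrho_{n-1})\|_{L^2([0,T];L^2(\Omega))}$ from Proposition~\ref{prop:un_H1_rho0_bound}, and the resulting bound is $C\|w_{n-2}(t)\|_{L^1(\Omega)}$ times an $n$-independent $L^2$-in-time weight. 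Second---and arguably cleaner---run the entire Cauchy estimate in $L^2$ by testing with $w_n$ itself: the diffusion-difference term becomes $\|\nabla w_n\|_{L^2}\cdot C\|w_{n-2}\|_{L^1}\|\nabla\varrho_{n-1}\|_{L^2}$, Young absorbs the first factor into $\mu_{\min}\|\nabla w_n\|_{L^2}^2$, and the recursion closes in $L^2([0,T];L^2(\Omega))$; the stated $L^1([0,T];L^1(\Omega))$ convergence then follows by H\"older on the bounded domain. Either route delivers the factorial-decay recursion you describe; the point is simply that the $\chi_\epsilon$ device is tailored to drift terms carrying a factor of the unknown itself and does not survive the second-order diffusion-difference contribution without one of these modifications.
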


Lastly we have the uniform estimate on the limit point $\varrho(\vec{r},t)$ in terms of the initial data $\varrho_0$.
\\
\begin{lemma}\label{lem:weak_conv_results}
\par
One has $\varrho\in L^2([0,T]; H^1(\Omega))\cap L^\infty([0,T]; L^2(\Omega))$ and that $\partial_t\varrho\in L^2([0,T]; H^{-1}(\Omega))$ with the uniform bound
\begin{align}\label{eq:total_energy_bound}
\|\varrho\|_{L^{\infty}([0,T];L^2(\Omega))}+\|\varrho\|_{L^{2}([0,T];H^1(\Omega))}+\|\partial_t\varrho\|_{L^2([0,T]; H^{-1}(\Omega))}\leq C(T) \|\varrho_{0}\|_{L^2(\Omega)}.
\end{align}
Additionally there exists a subsequence $\{\varrho_{n_k}\}_{k\geq 1}$ such that 
\begin{align}
\varrho_{n_k}\rightharpoonup \varrho& \quad \text{ in } L^2([0,T]; H^1(\Omega)),\label{lim:rho_nk_in_H1_c}\\
\partial_t\varrho_{n_k}\rightharpoonup \partial_t\varrho& \quad \text{ in } L^2([0,T]; H^{-1}(\Omega)).\label{lim:partial_trho_nk_in_H_minus_1}
\end{align}
where $\rightharpoonup$ denotes weak convergence.
\end{lemma}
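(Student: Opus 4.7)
The plan is to combine the uniform bounds on $\varrho_n$ from Proposition \ref{prop:bound_rhon} with a new $H^{-1}$ estimate on $\partial_t \varrho_n$, then invoke Banach--Alaoglu to extract weakly convergent subsequences, and finally identify the limit using the strong $L^1([0,T];L^1(\Omega))$ convergence established in Lemma \ref{lem:rhon_is_cauchy}.

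First, I would derive a uniform estimate on $\partial_t \varrho_n$ in $L^2([0,T]; H^{-1}(\Omega))$. Testing the frozen equation \eqref{eq:well_posed_un_ibvp} against $\eta \in H^1(\Omega)$ with $\|\eta\|_{H^1(\Omega)} \leq 1$ and integrating by parts (the boundary terms vanish by $\Xi[\varrho_n]\cdot\vec{n}|_{\partial\Omega} = 0$), one obtains
\begin{align*}
\langle \partial_t \varrho_n, \eta \rangle = -\int \mathrm{d}\vec{r}\, \nabla\eta \cdot \bm{D}_{\varrho_{n-2}}\nabla \varrho_n - \int \mathrm{d}\vec{r}\, \nabla \eta \cdot \varrho_n\bm{D}_{\varrho_{n-2}}\bigl(\nabla V_1 + \bm{A} + \nabla V_2 \star \varrho_{n-1}\bigr).
\end{align*}
Applying Cauchy--Schwarz together with the $L^\infty$ bounds on $\bm{D}_{\varrho_{n-2}}$ (Assumption \eqref{ass:D_pos_def_weak_diffable}), on $\nabla V_1$, $\nabla V_2$ (Assumption \eqref{ass:V1_V2_in_W_1_inf}), on $\bm{A}$ (via \eqref{ass:Z2_uniformly_bd}), and Young's convolution inequality (noting $\|\varrho_{n-1}\|_{L^1(\Omega)} = 1$ by Corollary \ref{cor:L_1_varrho_is_1}), the right-hand side is bounded by $C\|\eta\|_{H^1(\Omega)}\bigl(\|\nabla\varrho_n\|_{L^2(\Omega)} + \|\varrho_n\|_{L^2(\Omega)}\bigr)$. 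Squaring, integrating over $[0,T]$, and invoking \eqref{eq:L2_H_1_bound_for_rhon} from Proposition \ref{prop:bound_rhon} yields
\begin{align*}
\|\partial_t \varrho_n\|_{L^2([0,T]; H^{-1}(\Omega))} \leq C(T)\|\varrho_0\|_{L^2(\Omega)},
\end{align*}
with $C(T)$ independent of $n$.

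Second, since $L^2([0,T]; H^1(\Omega))$ and $L^2([0,T]; H^{-1}(\Omega))$ are reflexive and $L^\infty([0,T]; L^2(\Omega))$ is the dual of the separable space $L^1([0,T]; L^2(\Omega))$, the Banach--Alaoglu theorem delivers a common subsequence $\{\varrho_{n_k}\}_{k \geq 1}$ and functions $\tilde{\varrho} \in L^2([0,T]; H^1(\Omega)) \cap L^\infty([0,T]; L^2(\Omega))$ and $g \in L^2([0,T]; H^{-1}(\Omega))$ such that $\varrho_{n_k} \rightharpoonup \tilde{\varrho}$ weakly in $L^2([0,T];H^1(\Omega))$, weak-$\ast$ in $L^\infty([0,T];L^2(\Omega))$, and $\partial_t \varrho_{n_k} \rightharpoonup g$ weakly in $L^2([0,T]; H^{-1}(\Omega))$. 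To identify $\tilde{\varrho}$ with the limit $\varrho$ constructed in Lemma \ref{lem:rhon_is_cauchy}, I use that strong $L^1([0,T]; L^1(\Omega))$ convergence implies convergence in the sense of distributions: testing both sequences against any $\eta \in C_c^\infty(\Omega \times (0,T))$ and equating the two limits forces $\tilde{\varrho} = \varrho$ almost everywhere. A similar distributional argument identifies $g = \partial_t \varrho$. Finally, the uniform bound \eqref{eq:total_energy_bound} is inherited by $\varrho$ and $\partial_t\varrho$ by lower semicontinuity of the respective norms under weak (and weak-$\ast$) convergence.

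The main obstacle is verifying that the constant in the $H^{-1}$ estimate is genuinely independent of $n$, since this requires using that $\|\mu_{\max}^{n-2}\|_{L^\infty([0,T])}$ stays uniformly controlled along the iteration (which is inherited from Assumption \eqref{ass:D_pos_def_weak_diffable}) and that $\|\varrho_{n-1}\|_{L^1(\Omega)} \equiv 1$ along the iteration (Corollary \ref{cor:L_1_varrho_is_1}), so that all nonlocal coefficients are controlled uniformly. Once this uniformity is in place, the identification of the weak limit is standard since the strong $L^1([0,T];L^1(\Omega))$ convergence from Lemma \ref{lem:rhon_is_cauchy} is more than enough to pin down $\tilde{\varrho}$.
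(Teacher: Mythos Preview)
Your proposal is correct and follows essentially the same approach as the paper, which defers the explicit calculations to \cite{chazelle2017well} and \cite{rdmwthesisddft}: uniform energy bounds from Proposition~\ref{prop:bound_rhon}, an $H^{-1}$ estimate on $\partial_t\varrho_n$ obtained by duality testing of the frozen equation, Banach--Alaoglu to extract weakly convergent subsequences, and identification of the weak limit with the strong $L^1([0,T];L^1(\Omega))$ limit from Lemma~\ref{lem:rhon_is_cauchy}. Your explicit derivation of the $H^{-1}$ bound and the care you take in noting that the constants are uniform in $n$ (via Assumption~\eqref{ass:D_pos_def_weak_diffable} and Corollary~\ref{cor:L_1_varrho_is_1}) are exactly the points the paper flags as the additional work needed beyond \cite{chazelle2017well} to accommodate the non-constant diffusion tensor and confining potential.
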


The nature of convergence of the sequence $\{\varrho_n\}_{n\geq 1}$ as $n\to\infty$ are consolidated into the following result.
\\
\begin{corollary}\label{cor:summary_of_convergence}
\par
There exists a subsequence $\{\varrho_{n_k}\}_{k\geq 1}\subset \{\varrho_n\}_{n\geq 1}$ and a function $\varrho\in L^2([0,T];H^1(\Omega))$ with $\partial_t\varrho\in L^2([0,T];H^{-1}(\Omega))$ such that
\begin{align}
\varrho_n\to \varrho& \quad \text{ in } L^1([0,T]; L^1(\Omega)),\\
\varrho_{n_k}\rightharpoonup \varrho& \quad \text{(weakly) in } L^2([0,T]; H^1(\Omega)),\\
\partial_t \varrho_{n_k}\rightharpoonup \partial_t\varrho& \quad \text{(weakly) in } L^2([0,T]; H^{-1}(\Omega)).
\end{align}
\end{corollary}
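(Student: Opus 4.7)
The corollary is a consolidation of Lemma \ref{lem:rhon_is_cauchy} and Lemma \ref{lem:weak_conv_results}, and the only real content is ensuring that the three modes of convergence produce a single common limit $\varrho$ along a single common subsequence. I will proceed by nested extraction followed by a uniqueness-of-limits identification.

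First, Lemma \ref{lem:rhon_is_cauchy} already provides $\varrho \in L^1([0,T];L^1(\Omega))$ with $\varrho_n \to \varrho$ strongly in $L^1([0,T];L^1(\Omega))$ along the \emph{full} sequence, establishing the first convergence outright. Next, Proposition \ref{prop:bound_rhon} shows that $\{\varrho_n\}$ is uniformly bounded in the reflexive space $L^2([0,T];H^1(\Omega))$, so the Banach--Alaoglu theorem furnishes a subsequence $\{\varrho_{n_k}\}$ and a limit $\tilde\varrho \in L^2([0,T];H^1(\Omega))$ with $\varrho_{n_k} \rightharpoonup \tilde\varrho$ weakly. To identify $\tilde\varrho$ with $\varrho$, I would test both convergences against an arbitrary $\phi \in C_c^\infty(\Omega \times (0,T))$: weak $L^2$-convergence gives
\begin{align*}
\int_0^T\!\!\int_\Omega \varrho_{n_k}\,\phi\,\mathrm{d}\vec{r}\,\mathrm{d}t \; \longrightarrow \; \int_0^T\!\!\int_\Omega \tilde\varrho\,\phi\,\mathrm{d}\vec{r}\,\mathrm{d}t,
\end{align*}
while strong $L^1$-convergence combined with $\phi \in L^\infty$ gives that the same integrals converge to $\int_0^T\!\!\int_\Omega \varrho\,\phi\,\mathrm{d}\vec{r}\,\mathrm{d}t$. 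Uniqueness of limits in $\mathbb{R}$ and density of test functions then force $\tilde\varrho = \varrho$ almost everywhere, and in particular upgrade the regularity of $\varrho$ to $L^2([0,T];H^1(\Omega))$.

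Third, the uniform bound $\|\partial_t\varrho_n\|_{L^2([0,T];H^{-1}(\Omega))} \leq C(T)\|\varrho_0\|_{L^2(\Omega)}$ from \eqref{eq:total_energy_bound} in Lemma \ref{lem:weak_conv_results} allows a further extraction: by Banach--Alaoglu applied to the still-bounded subsequence $\{\partial_t \varrho_{n_k}\}$, I pass to a further subsequence (not relabelled) with $\partial_t \varrho_{n_k} \rightharpoonup \zeta$ weakly in $L^2([0,T];H^{-1}(\Omega))$. To identify $\zeta = \partial_t \varrho$, I would test against $\psi \in C_c^\infty((0,T); H^1(\Omega))$ and use integration by parts in time,
\begin{align*}
\int_0^T \langle \partial_t\varrho_{n_k},\,\psi\rangle\,\mathrm{d}t \;=\; -\int_0^T \langle \varrho_{n_k},\,\partial_t\psi\rangle\,\mathrm{d}t.
\end{align*}
The left-hand side converges to $\int_0^T \langle \zeta,\,\psi\rangle\,\mathrm{d}t$ by the new weak convergence, while the right-hand side converges to $-\int_0^T \langle \varrho,\,\partial_t\psi\rangle\,\mathrm{d}t$ by the $L^2([0,T];H^1(\Omega))$-weak convergence already established. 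Equality for all such $\psi$ yields $\zeta = \partial_t\varrho$ distributionally, so $\partial_t\varrho \in L^2([0,T];H^{-1}(\Omega))$ with $\partial_t\varrho_{n_k} \rightharpoonup \partial_t \varrho$ along the chosen subsequence.

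The main (mild) obstacle is cosmetic rather than analytic: arranging the two weak convergences along a \emph{single} common subsequence, which is handled by the nested extraction described above, and the identification $\tilde\varrho = \varrho$ across two incompatible modes of convergence, which reduces to pairing both limits against a dense class of smooth test functions. No new estimates are required beyond those already proved.
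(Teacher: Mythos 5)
Your proposal is correct and follows essentially the same route as the paper, which states no separate proof for this corollary: it is exactly the consolidation of Lemma \ref{lem:rhon_is_cauchy} (strong $L^1$ convergence of the full sequence) and Lemma \ref{lem:weak_conv_results} (the energy bound and the weak convergences along a subsequence), with your test-function identification of the weak limits and of $\partial_t\varrho$ simply making explicit what the paper leaves implicit.
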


We are now in the position to obtain the existence and uniqueness of weak solutions $\varrho(\vec{r},t)$. First we state a calculus result which will be useful when working with the weak formulation \eqref{eq:weak_formulation_pair_rho}.
\\
\begin{lemma}\label{lem:calculus_of_inner_product}
\par
Suppose $\varrho\in L^2([0,T]; H^1(\Omega))$ and $\partial_t\varrho\in L^2([0,T]; H^{-1}(\Omega))$ then the mapping
\begin{align}
t\to \|\varrho(t)\|_{L^2(\Omega)}^2
\end{align}
is absolutely continuous with
\begin{align}
\der[]{t}\|\varrho(t)\|_{L^2(\Omega)}^2 = 2\langle \partial_t\varrho(t),\, \varrho(t)\rangle
\end{align}
for a.e. $t\in [0,T]$.
\end{lemma}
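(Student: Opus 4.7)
This is the classical Lions--Magenes lemma adapted to our setting with $H^1(\Omega)$/$H^{-1}(\Omega)$ duality. The plan is to prove the identity first for smooth functions, where it is nothing but the chain rule, and then pass to the limit via time-mollification.

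First I would extend $\varrho$ to the slightly larger interval $[-\tau, T+\tau]$ for some $\tau>0$ (for example by reflection $\tilde\varrho(t)=\varrho(-t)$ on $[-\tau,0]$ and $\tilde\varrho(t)=\varrho(2T-t)$ on $[T,T+\tau]$) so that the extension $\tilde\varrho$ still lies in $L^2([-\tau,T+\tau];H^1(\Omega))$ with distributional time derivative in $L^2([-\tau,T+\tau];H^{-1}(\Omega))$. Then, introducing a standard symmetric mollifier $\zeta_\epsilon$ in time, set $\varrho^\epsilon := \zeta_\epsilon \ast_t \tilde\varrho$. Standard properties of mollification give $\varrho^\epsilon \in C^\infty([0,T];H^1(\Omega))$ with $\partial_t\varrho^\epsilon = \zeta_\epsilon \ast_t \partial_t\tilde\varrho \in C^\infty([0,T];H^{-1}(\Omega))$, and
\begin{align}
\varrho^\epsilon \to \varrho & \quad \text{in } L^2([0,T];H^1(\Omega)), \\
\partial_t\varrho^\epsilon \to \partial_t\varrho & \quad \text{in } L^2([0,T];H^{-1}(\Omega)),
\end{align}
as $\epsilon\to 0$.

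For each $\epsilon>0$, $\varrho^\epsilon(t)$ is a smooth $H^1(\Omega)$-valued curve, so the classical chain rule yields, for every $s,t\in[0,T]$,
\begin{align}
\|\varrho^\epsilon(t)\|_{L^2(\Omega)}^2 - \|\varrho^\epsilon(s)\|_{L^2(\Omega)}^2 = 2\int_s^t \mathrm{d}\sigma\, \langle \partial_t\varrho^\epsilon(\sigma),\, \varrho^\epsilon(\sigma)\rangle,
\end{align}
where the bracket is the $H^{-1}$/$H^1$ duality pairing (which coincides with the $L^2$ inner product for smooth functions). The next step is to pass to the limit $\epsilon\to 0$: the right-hand side converges to $2\int_s^t\mathrm{d}\sigma\,\langle \partial_t\varrho(\sigma),\varrho(\sigma)\rangle$ because the product of a strongly convergent sequence in $L^2([0,T];H^1)$ with a strongly convergent sequence in $L^2([0,T];H^{-1})$ converges in $L^1([0,T])$, hence its integral over $[s,t]$ converges. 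Combining with the uniform estimate $\|\varrho^\epsilon\|_{L^\infty([0,T];L^2)}\leq C$ (which one obtains from the identity itself together with the Cauchy--Schwarz bound $|\langle \partial_t\varrho^\epsilon,\varrho^\epsilon\rangle|\leq \|\partial_t\varrho^\epsilon\|_{H^{-1}}\|\varrho^\epsilon\|_{H^1}$), one deduces that, up to redefinition on a null set, $\varrho\in C([0,T];L^2(\Omega))$ and
\begin{align}
\|\varrho(t)\|_{L^2(\Omega)}^2 - \|\varrho(s)\|_{L^2(\Omega)}^2 = 2\int_s^t\mathrm{d}\sigma\,\langle\partial_t\varrho(\sigma),\,\varrho(\sigma)\rangle
\end{align}
for all $s,t\in[0,T]$.

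The right-hand side is the integral of an $L^1([0,T])$ function, so $t\mapsto \|\varrho(t)\|_{L^2(\Omega)}^2$ is absolutely continuous, and differentiating via the Lebesgue differentiation theorem gives the pointwise a.e.\ identity $\tfrac{\mathrm{d}}{\mathrm{d}t}\|\varrho(t)\|_{L^2(\Omega)}^2 = 2\langle\partial_t\varrho(t),\varrho(t)\rangle$. The main technical point to watch is that, because the no-flux boundary condition in our setting leads us to use $H^{-1}(\Omega)=(H^1(\Omega))^\ast$ rather than $(H^1_0(\Omega))^\ast$, one must verify that the chain rule argument for $\varrho^\epsilon$ is still justified in this pairing; this is the case since $H^1(\Omega)\hookrightarrow L^2(\Omega)\hookrightarrow H^1(\Omega)^\ast$ forms a Gelfand triple and the pairing reduces to the $L^2$ inner product on the dense subspace of smooth functions used after mollification.
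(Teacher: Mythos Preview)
Your proposal is correct and is precisely the standard Lions--Magenes/Evans argument that the paper itself invokes: the paper's proof simply states that, because the no-flux boundary condition ensures integration by parts without boundary terms, the proof is identical to the textbook one in \cite{evans2002partial}. You have effectively written out that textbook argument in detail, including the observation about the Gelfand triple $H^1(\Omega)\hookrightarrow L^2(\Omega)\hookrightarrow H^1(\Omega)^\ast$ replacing the usual $H^1_0/H^{-1}$ pairing, which is exactly the point the paper is alluding to.
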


\begin{proof}
Since the condition no-flux boundary condition \eqref{eq:ddft_dyn_non_gradient_flow_bc} guarantees integration by parts without extra terms the proof is identical to the textbook one \cite{evans2002partial}.
\end{proof}

\begin{theorem}[Lax--Milgram Theorem]\label{thm:lax-milgram}
Let $X\subset \mathbb{R}^d$ be a Hilbert space and $a:X\times X\to\mathbb{R}$ be a continuous bilinear functional. Suppose 
\begin{enumerate}
\item $a$ is coercive such that $| a(u,u) | \geq c\|u\|_{X}$ for some $c_1>0$ and every $u\in X$.
\item $a$ is bounded, that is for every $u,v\in X$ we have $|a(u,v)| \leq c_2\|u\|_{X}\|v\|_{X}$ for some $c_2>0$.
\item $l(v)$ is bounded, that is for every $v\in X$ there exists some $c_3>0$ such that $| l(v)|\leq c_3\|v\|_{X}$.
\end{enumerate}
Then, there exists a unique $u\in X$ such that
\begin{align}
B(u,v) = l(v),
\end{align}
for every $v\in X$.
\end{theorem}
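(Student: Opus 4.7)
The plan is to reduce the variational problem to an operator equation on $X$ via the Riesz representation theorem, and then invert the resulting operator using coercivity together with a closed-range argument. Since $a$ is not assumed symmetric, the alternative route of minimising a quadratic functional $\tfrac12 a(u,u)-l(u)$ is unavailable, so I would rely on a purely linear-operator-theoretic proof rather than a direct variational one.

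First I would use hypothesis (2) to observe that for each fixed $u\in X$, the map $v\mapsto a(u,v)$ is a bounded linear functional on $X$, so the Riesz representation theorem produces a unique $Au\in X$ with $a(u,v)=\langle Au,v\rangle_X$ for all $v\in X$. Bilinearity of $a$ gives linearity of $A\colon X\to X$, and substituting $v=Au$ in the bound $|a(u,v)|\leq c_2\|u\|_X\|v\|_X$ shows $\|Au\|_X\leq c_2\|u\|_X$, i.e.\ $A$ is bounded. Applying Riesz to $l$ similarly yields a unique $w\in X$ with $l(v)=\langle w,v\rangle_X$. The variational equation thereby becomes the single operator equation $Au=w$, and existence plus uniqueness of $u$ is equivalent to bijectivity of $A$.

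Next I would deduce from coercivity that
\begin{equation*}
c_1\|u\|_X^2\leq |a(u,u)|=|\langle Au,u\rangle_X|\leq \|Au\|_X\|u\|_X,
\end{equation*}
so $\|Au\|_X\geq c_1\|u\|_X$ for every $u\in X$. This immediately gives injectivity of $A$, and by a standard Cauchy-sequence lifting argument it shows that $\operatorname{Ran}(A)$ is closed in $X$. For surjectivity I would show $\operatorname{Ran}(A)^{\perp}=\{0\}$: if $z$ is orthogonal to $\operatorname{Ran}(A)$, then in particular $\langle Az,z\rangle_X=0$, which combined with coercivity forces $z=0$. Closed range together with trivial orthogonal complement yields $\operatorname{Ran}(A)=X$, so $A$ is bijective and $u=A^{-1}w$ is the unique element with $a(u,v)=l(v)$ for all $v\in X$.

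The result is classical and no serious obstacle arises; the one place that deserves care is verifying the closed-range step, where the coercive lower bound $\|A(u_n-u_m)\|_X\geq c_1\|u_n-u_m\|_X$ must be used to promote a Cauchy sequence in $\operatorname{Ran}(A)$ to a Cauchy sequence in $X$, producing the required preimage of the limit by continuity of $A$.
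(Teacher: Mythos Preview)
Your proof is the standard and correct operator-theoretic argument for Lax--Milgram via Riesz representation, injectivity from coercivity, closed range, and dense range. Note, however, that the paper does not actually prove this theorem: it is merely stated in the appendix as a classical reference result (cited to Lax--Milgram 1954) and invoked as a black box in Lemma~\ref{lem:lax_milgram_n}. So there is no ``paper's own proof'' to compare against; your argument supplies precisely the textbook proof the paper omits.
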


\bibliographystyle{plain}

%% Loading bibliography database
%\bibliography{GMOP_DDFT_arXiv}

          \end{document}